\documentclass[a4,10pt,reqno,fleqn]{amsart}

\title[Closed Lie ideals of tensor products of $C^*$-algebras II]{On
  closed Lie ideals of certain tensor products of $C^*$-algebras II}
\author[V. P. Gupta, R. Jain and B. Talwar]{Ved Prakash Gupta, Ranjana
  Jain and Bharat Talwar}

\address{Ved Prakash  Gupta, School of Physical Sciences, Jawaharlal Nehru University, New
  Delhi} \email{vedgupta@mail.jnu.ac.in}

\address{Ranjana Jain,
  Department of Mathematics, University of Delhi, Delhi}
\email{rjain@maths.du.ac.in}

\address{Bharat Talwar, Department of
  Mathematics, University of Delhi, Delhi}
\email{btalwar.math@gmail.com}

\thanks{The first named author was supported partially by a UPE-II
  project (with Id 228) of Jawaharlal Nehru University, New Delhi and
  the third named author was supported by a Junior Research Fellowship
  of CSIR with file number 09/045(1442)/2016-EMR-I}

\usepackage[all]{xy}
\usepackage{amsmath,amsfonts,amssymb,amsthm,a4,hyperref}
\usepackage{mathrsfs}

\usepackage[margin=3.07cm]{geometry}
\usepackage{hyperref}
\usepackage{cleveref}
\usepackage{setspace}
\usepackage{enumitem}
\usepackage{verbatim}

\newtheorem{theorem}{\bf Theorem}[section]
\newtheorem{lemma}[theorem]{\bf Lemma}
\newtheorem{proposition}[theorem]{\bf Proposition}
\newtheorem{corollary}[theorem]{\bf Corollary}
\newtheorem{remark}[theorem]{\bf Remark}
\numberwithin{equation}{section}

\newcommand{\norm}[1]{\left\Vert #1 \right\Vert}
\newcommand{\oop}{\widehat\otimes}
\newcommand{\seq}{\subseteq}
\newcommand{\oh}{\otimes^h}
\newcommand{\ota}{\otimes^{\alpha}}
\newcommand{\omin}{\otimes^{\min}}
\newcommand{\obp}{\otimes^\gamma}

\newcommand{\ra}{\rightarrow}
\newcommand{\ot}{\otimes}

\newcommand{\C}{\mathbb{C}}

\newcommand{\M}{\mathbb{M}}

\newcommand{\ol}{\overline}
\newcommand{\mcal}{\mathcal}

\begin{document}

\keywords{Banach algebras, $C^*$-algebras, commutators, ideals, Lie
  ideals, operator spaces, tensor products}

\subjclass[2010]{46L06, 46L07, 46L25}

\maketitle

\begin{abstract}
 We identify all closed Lie ideals of $A \ota B$ and $B(H) \ota B(H)$,
 where $\ota$ is either the Haagerup tensor product, the Banach space
 projective tensor product or the operator space projective tensor
 product, $A$ is any simple $C^*$-algebra, $B$ is any $C^*$-algebra
 with one of them admitting no tracial states, and $H$ is an infinite
 dimensional separable Hilbert space. Further, generalizing a result
 of Marcoux, we also identify all closed Lie ideals of $A\omin B$,
 where $A$ is a simple $C^*$-algebra with at most one tracial state
 and $B$ is any commutative $C^*$-algebra.
\end{abstract}
\vspace*{-4mm}

\section{Introduction}
 A complex associative algebra $A$ inherits a canonical Lie algebra
 structure via the Lie bracket $[x, y]:= xy -xy$. A subspace $L$ of
 $A$ is said to be a Lie ideal if $[L, A]\subseteq L$.

There exists extensive literature devoted to the study of Lie
ideals of pure algebras and of operator algebras (see, for instance,
\cite{bks, fms, fm, her, hms, mar, mie, mur} and the references
therein). However, apart from the results established in \cite{bks}
and \cite{mar}, not much is known about the structure of closed Lie
ideals of tensor products of operator algebras. Improvising their
techniques, some work on the study of closed Lie ideals of certain
tensor products of $C^*$-algebras was taken up in \cite{GJ1}.  This
article is a continuation of the same theme. Here is a quick overview
of the highlights of this article.

It was proved in \cite{GJ1} that {if $\ota$ is the Haagerup tensor
  product or the operator space projective tensor product} and $H$ is
an infinite dimensional separable Hilbert space, then the Banach
algebra $B(H) \ot^{\alpha} B(H)$ contains only one non-trivial central
closed Lie ideal, namely $\C (1 \ot 1)$, and that every non-central
closed Lie ideal of $B(H) \ot^{\alpha} B(H)$ contains the product
ideal $K (H) \ot^{\alpha} K(H)$. In this article, based on some recent
progress made in \cite{GJ2}, we include the study of closed Lie ideals
of the Banach space projective tensor product $A \obp B$ of $C^*$-algebras
$A$ and $B$, as well. Improving upon above  result of \cite{GJ1}, we go
one step ahead and identify all closed Lie ideals of $B(H)
\ot^{\alpha} B(H)$ as follows:
\vspace*{1mm}

\noindent {\bf \Cref{bh-ota-bh}}: {\em Let $H$ be an infinite
  dimensional separable Hilbert space. Then, for $\ota = \oh, \obp$ or
  $\oop$, the Banach algebra $B(H) \ota B(H)$ contains only 11
  distinct closed Lie ideals, each of which is either a
  closed ideal or is a subspace of the form $\C(1\ot 1) + K$ for some closed
  ideal $K$ in $B(H) \ota B(H)$.}
 \vspace*{1mm}

In \cite[Theorem 4.16]{GJ1}, it was shown that if $A$ and $B$ are
simple unital $C^*$-algebras with one of them admitting no tracial
functionals, then the only proper non-trivial closed Lie ideal of $A
\oh B$ or $A \oop B$ is $\C (1 \ot 1)$. We generalize this result  to
the following:
\vspace*{1mm}

\noindent {\bf \Cref{thm1}}: {\em
 Let $A$ be a simple $C^*$-algebra and $B$ be any $C^*$-algebra with
  one of them admitting no tracial states. If $\ota =
  \oh, \obp$ or $\oop$, then a subspace $L$ of $A \ota B$ is a closed Lie
  ideal if and only if it is of the form $$L = \left\{
  \begin{array}{ll}
    \overline{A \ot J + \C 1 \ot S}, & \ \text{if}\ A \ \text{is
      unital, and} \\ A \ota J, &
    \ \text{if}\ A \ \text{is non-unital},
  \end{array} \right. $$
  for some closed ideal $J$ in $B$ and some  subspace $S$ of  $N(J)$.
}

\vspace*{1mm}

 As a consequence, we deduce the following: 
\vspace*{1mm}

\noindent {\bf \Cref{A-oh-BH}}:{\em Let $A$ be a simple $C^*$-algebra,
  $H$ be an infinite dimensional separable Hilbert space and $\ota =
  \oh, \obp$ or $\oop$. If $A$ is unital, then $A \ota B(H)$ has only
  three non-trivial proper closed Lie ideals, namely, $\C (1 \ot 1)$,
  the closed ideal $A \ota K(H)$ and the closed subspace $ A \ota K(H)
  + \C(1 \ot 1)$. And, if $A$ is non-unital, then $A \ota B(H)$ has
  only one non-trivial proper closed Lie ideal, namely, the closed
  ideal $A \ota K(H)$.}
\vspace*{1mm}

In \cite{GJ1}, generalizing a result of \cite{mar}, all closed Lie
ideals of $A \omin B$ where identified for any simple unital
$C^*$-algebra $A$ with at most one tracial state and for any unital
commutative $C^*$-algebra $B$. Herein, with the help of a suitable
version of Tietze Extension Theorem for functions vanishing at
infinity, we prove that similar characterization holds even if $A$ and
$B$ are both non-unital.
\vspace*{1mm}

\noindent {\bf \Cref{A-omin-B}}: {\em  Let $A$ be a simple  $C^*$-algebra  and $B$ be a commutative $C^*$-algebra.
 \begin{enumerate}
   \item If $A$ is unital and admits at most one tracial state, then a  subspace $L$ of $A \omin B$ is a
closed Lie ideal if and only if
$$  L = \left\{
 \begin{array}{ll}
      \overline{sl(A) \ot J} + \C 1 \ot S, & \mathit{if}\ \mcal{T}(A)
     \neq \emptyset,\ \mathit{and} \\ \overline{A \otimes J + \C 1 \ot
       S}, & \mathit{if}\ \mcal{T}(A) = \emptyset,
 \end{array}
 \right.    $$
 for some closed ideal  $J$ and closed subspace $S$ in $B$.
\item If $A$ is non-unital with 
  $\mcal{T}(A) = \emptyset$ , then a subspace $L$ of $A \omin B$ is a
  closed Lie ideal if and only  $L$ is a closed ideal.
 \end{enumerate}
} 
\vspace*{1mm}

\noindent {\bf Comments:} Recall that, apart from being injective in their
respective categories, there is not much common between the tensor
produts $\oh$ and $\omin$; and, on the other hand, $\oop$ and $\omin$
have hardly anything similar to write about. Still, it is interesting to note the
striking similarity between the structure of $N(A \ota J)$ appearing
in \Cref{NK-eqn-2} and that of $N(A \omin J)$ that appears in
\Cref{6A}.  This further yields an unmissable analogy between the
closed Lie ideal structures of $A \ota B$ and of $A \omin C_0(X)$,
obtained in \Cref{thm1} and \Cref{A-omin-B}, respectively.

\section{Preliminaries}
 Let us first fix some notations and conventions, and recall some
terminologies that we shall adhere to. All algebras considered in this
article will be assumed to be associative over the base field
$\C$.  For subspaces $X$ and $Y$ of an algebra $A$, $[X, Y]$ denotes the
subspace generated by all commutators $[x, y]:=xy - yx$, $x \in X$, $y
\in Y$. A subspace $L$ of an algebra $A$ is called a {\em Lie ideal}
if $[L, A] \subseteq L$. If the algebra $A$ is unital, its unit will
be denoted by $1_A$ or simply by $1$ if there is no ambiguity. Any
ideal of $A$, the susbpace $\C 1_A$ and, more generally, the center
$\mcal{Z}(A)$ of $A$ are clearly Lie ideals.

For a $C^*$-algebra $A$, a {\em tracial state} $\varphi$ on $A$
is a positive linear functional of norm one satisfying the tracial
property $\varphi (ab) = \varphi(ba) $ for all $a, b \in A$, and
$\mathcal{T} (A)$ denotes the set of all tracial states on $A$. A {\em
  tracial functional} on a Banach algebra $A$ is a non-zero continuous
linear functional on $A$ satisfying the tracial property as above. 

\begin{remark}\label{tf-ts}
   For a $C^*$-algebra $A$, it is known that $\ol{[A,A]} = A$ if and
   only if $\mcal{T}(A) =\emptyset$ - \cite[Theorem 2.9]{cnp}.  In
   particular, as a consequence of Hahn-Banach Theorem, a
   $C^*$-algebra $A$ admits a tracial functional if and only if it
   admits a tracial state.
\end{remark}

For two vector spaces $V$ and $W$, their algebraic tensor product will
be denoted by $V \otimes W$. If $V$ and $W$ are algebras, then so is
$V \ot W$ with respect to the canonical product satisfying $(u \ot w)
(v \ot z) = uv \ot wz$ for all $u, v \in V$ and $w, z \in W$.

For any two Banach algebras $A$ and $B$, an {\em algebra norm} on $A
\ot B$ is a norm $\| \cdot \|_{\alpha}$ that satisfies $\| w z
\|_{\alpha} \leq \| w \|_{\alpha} \|z\|_{\alpha}$ for all $w, z \in A
\ot B$. For every such algebra norm, $A \ot^{\alpha} B$, the
completion of $A \ot B$ with respect to the norm $\| \cdot
\|_{\alpha}$, is a Banach algebra. Further, $\| \cdot\|_\alpha $ is a
  {\em cross norm} on $A \ot B$ if $\|a \ot b \|_\alpha= \|a\| \|b\|$
  for all $a \in A, b\in B$.  We will be mainly concerned with the
  minimal $C^*$-tensor norm $\| \cdot \|_{\min}$, the Haagerup tensor
  norm $\| \cdot \|_h$, the operator space projective tensor norm $\|
  \cdot \|_{\wedge}$ and the Banach space projective tensor norm $\|
  \cdot \|_{\gamma}$ (see \cite{ER} for details), all four of which
  are cross norms. In fact, for any pair of $C^*$-algebras $A$ and
  $B$, $A \omin B$ is a $C^*$-algebra, $A \oop B$ and $A \obp B$ are
  Banach $*$-algebras whereas the canonical involution on the Banach
  algebra $A \oh B$ is not an isometry (see \cite{ble, ER}). For the
  sake of convenience, we include a proof of the following elementary
  result, which will be used ahead.

  \begin{lemma}\label{A-ot-Mn-closed}
  Let $A$ be a Banach algebra with a bounded approximate
  identity. Then, for any cross algebra norm $\|\cdot\|_\alpha$ and
  any closed subspace $B$ of $A$, the space $B \ot \M_k$ is complete.
\end{lemma}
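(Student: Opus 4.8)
The plan is to show that, on the space $B \ot \M_k$, the cross algebra norm $\norm{\cdot}_\alpha$ is equivalent to the natural norm coming from the identification of $B \ot \M_k$ with the direct sum $B^{k^2}$ via matrix entries, and then to read off completeness from that of the latter. Writing a generic element as $u = \sum_{i,j=1}^k b_{ij} \ot e_{ij}$ with $b_{ij} \in B$ and $e_{ij}$ the canonical matrix units of $\M_k$, the coefficients $b_{ij}$ are uniquely determined, so $u \mapsto (b_{ij})_{i,j}$ is a linear bijection of $B \ot \M_k$ onto $B^{k^2}$. Since $B$ is a closed subspace of the Banach algebra $A$, it is itself a Banach space, and hence $B^{k^2}$ with the norm $\sum_{i,j}\norm{b_{ij}}$ is complete, being a finite direct sum of Banach spaces. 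Thus it suffices to establish a two-sided norm estimate.

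One inequality is immediate: by the triangle inequality and the cross-norm property, $\norm{u}_\alpha \le \sum_{i,j}\norm{b_{ij}}\,\norm{e_{ij}} = \sum_{i,j}\norm{b_{ij}}$, since $\norm{e_{ij}} = 1$ in $\M_k$. For the reverse inequality I would combine the algebra-norm and cross-norm properties with the bounded approximate identity, which is the heart of the matter. Let $(e_\lambda)$ be a bounded approximate identity for $A$ with $\norm{e_\lambda} \le M$. The idea is to isolate a single entry of $u$ by compressing with matrix units; because $A$ need not be unital, the elements $1 \ot e_{ii}$ are unavailable, and the approximate identity must play their role. A short computation with matrix-unit multiplication gives $(e_\lambda \ot e_{ii})\, u\, (e_\lambda \ot e_{jj}) = (e_\lambda b_{ij} e_\lambda) \ot e_{ij}$. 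Applying submultiplicativity of $\norm{\cdot}_\alpha$ and then the cross-norm property to the outer factors yields $\norm{e_\lambda b_{ij} e_\lambda} = \norm{(e_\lambda b_{ij} e_\lambda) \ot e_{ij}}_\alpha \le M^2\,\norm{u}_\alpha$ for every $\lambda$, using $\norm{e_{ii}} = \norm{e_{jj}} = 1$. Letting $\lambda$ run through the net, $e_\lambda b_{ij} e_\lambda \to b_{ij}$ in $A$, so $\norm{b_{ij}} \le M^2\,\norm{u}_\alpha$; summing over $i,j$ gives $\sum_{i,j}\norm{b_{ij}} \le k^2 M^2\,\norm{u}_\alpha$. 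The two estimates show that $\norm{\cdot}_\alpha$ and the entrywise norm are equivalent on $B \ot \M_k$, whence $B \ot \M_k$ is complete in $\norm{\cdot}_\alpha$.

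The only genuinely delicate point is the lower bound, and within it the substitution of a possibly missing unit by the bounded approximate identity. The uniform bound $\norm{e_\lambda} \le M$ is precisely what keeps the compression estimate independent of $\lambda$, while the two-sided convergence $e_\lambda a \to a$ and $a e_\lambda \to a$ is what recovers $\norm{b_{ij}}$ in the limit. Everything else is routine bookkeeping with matrix units, and I do not expect any obstruction there.
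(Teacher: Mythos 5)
Your proof is correct and follows essentially the same route as the paper: the key step in both is the compression $(e_\lambda \ot E_{ii})\,u\,(e_\lambda \ot E_{jj})$ (the paper uses $E_{ji}$ on the right, an immaterial difference), combined with submultiplicativity, the cross-norm property and the bound $\|e_\lambda\| \le M$, to recover each entry norm $\|b_{ij}\|$ up to the factor $M^2$. Packaging the two estimates as a norm equivalence with $B^{k^2}$ rather than running the Cauchy-sequence argument directly, as the paper does, is only a cosmetic reformulation.
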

\begin{proof}  
  Let $\{e_{\lambda}\}$ be a bounded approximate identity of $A$ with
  $\sup_\lambda \|e_\lambda\| = M< \infty$.  Let $\{b_n = \sum_{ij}
  b_{ij}^{(n)} \ot E_{ij}\}$ be a Cauchy sequence in $B \ot \M_k$,
  where $\{E_{ij}\}$ is the standard system of matrix units of
  $\M_k$. For every $1 \leq i, j \leq k$, we have \begin{eqnarray*} \|
    e_{\lambda} (b^{(n)}_{ij}-b^{(m)}_{ij}) e_{\lambda} \| & = & \|
    e_{\lambda} ({b^{(n)}_{ij}-b^{(m)}_{ij}}) e_{\lambda} \|\,
    \|E_{ii}\| \\ & = &
    \|\big(e_{\lambda}{(b^{(n)}_{ij}-b^{(m)}_{ij})e_{\lambda}\big) \ot
      {E_{ii}}}\|_{\alpha} \\ & = & \|{(e_{\lambda}\ot E_{ii})
      (b_n-b_m) (e_{\lambda} \ot E_{ji})}\|_{\alpha} \\ & \leq &
    \|{e_{\lambda} \ot E_{ii}}\|_{\alpha}\, \|{b_n-b_m\|_{\alpha} \,
      \|{e_{\lambda}\ot E_{ji}}}\|_{\alpha}\\ & \leq &
    \|b_n-b_m\|_{\alpha} \, M^2 .
\end{eqnarray*}
  Since $e_{\lambda} a e_{\lambda} \rightarrow a$ for every $a \in A$, we obtain
  $$ \| (b^{(n)}_{ij}-b^{(m)}_{ij}) \| = \lim_\lambda \| e_{\lambda}
  (b^{(n)}_{ij}-b^{(m)}_{ij}) e_{\lambda} \| \leq \|b_n-b_m\|_{\alpha}
  \, M^2 ,$$ which shows that $\{b_{ij}^{(n)}\}_n$ is a Cauchy
  sequence and also that each co-ordinate map $B \ot \M_k \ni \sum_{ij}
  b_{ij} \ot E_{ij} \ra b_{ij} \in B$ is continuous.  Suppose
  $b^{(n)}_{ij} \longrightarrow b_{ij} \in B$, $1 \leq i, j \leq k$ and let $b=\sum_{ij}
  b_{ij} \ot E_{ij}$. Then, \[ \norm{b_n-b}_{\alpha} = \Big\|
  \sum_{i,j} (b_{ij}^{(n)} - b_{ij}) \ot E_{ij}\Big\|_{\alpha} \leq
  \sum_{i,j} \big\|(b_{ij}^{(n)} - b_{ij}) \big\| \|{E_{ij}}\|
  \longrightarrow 0. \]
\end{proof}

  It is known that the $C^*$-minimal tensor product is injective and
  so is the Haagerup tensor product (see \cite{sinc, ER}). So, for closed
  subspaces $E_0$ and $F_0$ in operator spaces $E$ and $F$,
  respectively, $E_0 \oh F_0$ can be identified isometrically with the
  closed subspace $\overline{E_0 \ot F_0}^{h}$ of the operator space $
  E \oh F$. On the other hand, neither the Banach space nor the
  operator space projective tensor product is injective.  But in some
  cases they observe extremely useful forms of partial injectivity.

\begin{remark}\label{I-oop-J} For closed ideals $I$ and $J$ of
$C^*$-algebras $A$ and $B$, respectively, it is known (\cite[Theorem 5]{kum}) that $I \oop J$
can be identified algebraically and isometrically with the product ideal $\ol{I \ot J}$
  of the Banach $*$-algebra $ A
  \oop B$.   In view of this identification, we shall consider $I \oop J$ as a
  closed ideal of $A \oop B$.
\end{remark}

In the Ph.D. thesis of the second named author, it was observed that
the situation is even better for $\obp$ - see \cite[$\S 2$]{GJ2} for details.

\begin{remark}\label{obp-injective}
  For closed $*$-subalgebras $A_1$ and $B_1$ of $C^*$-algebras $A$ and
  $B$, respectively, it is known (\cite[Theorem 2.6]{GJ2}) that $A_1
  \obp B_1$ can be identified $*$-isomorphically and isometrically
  with the closed $*$-subalgebra $\ol{A_1 \ot B_1}$ of the Banach
  $*$-algebra $ A \obp B$.  In view of this identification, we shall
  consider $A_1 \obp B_1$ as a closed $*$-subalgebra of $A \obp B$.
\end{remark}

We will have occassions to appeal to the following  elementary observations.
\begin{lemma}\label{commutators-tf}\label{commutators}
Let $A$ and $B$ be Banach algebras. Suppose $B$ is unital
and $A$ admits no tracial functionals. If $\| \cdot \|_{\alpha}$ is any
algebra norm, then
$$\ol{[A \ot B, A \ot B]} = \ol{[A \ot^{\alpha} B, A \ot^{\alpha} B]} = A \ot^{\alpha} B.$$ In
particular, $A \ota B$ does not have any tracial functionals.
\end{lemma}
  \begin{proof}
Since $A$ has no tracial functionals, $\ol{[A, A]} = A$. In
particular, $\ol{[A, A]} \ot B$ is dense in $A \ota B$ and for each
$a, a' \in A$ and $b \in B$, we have $[a, a']\ot b = [a \ot b, a' \ot
  1]$, so that $$ A \ota B = \ol{[A, A] \ot B} \subseteq \ol{[A \ot B,
    A \ot B]} \subseteq A \ota B.$$ In particular, by Hahn-Banach
Theorem, $A \ota B$ admits no tracial functionals.
\end{proof}

  For $C^*$-algebras, thanks to continuous calculus, we can drop unitality from the hypothesis.
\begin{lemma}\label{commutators-c-star}
Let $A$ and $B$ be $C^*$-algebras and suppose $B$ admits no tracial
states. If $\| \cdot \|_{\alpha}$ is any algebra norm, then
$$\ol{[A \ot B, A \ot  B]} = \ol{[A \ot^{\alpha} B, A \ot^{\alpha} B]} = A \ot^{\alpha} B.$$ In
particular, $A \ota B$ does not have any tracial functionals.
\end{lemma}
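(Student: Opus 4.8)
The plan is to mirror the proof of \Cref{commutators-tf}, but with the roles of the two factors interchanged and, crucially, with the unit of the ambient algebra replaced by an approximate identity supplied by the $C^*$-structure. Since $B$ admits no tracial states, \Cref{tf-ts} gives $\ol{[B,B]} = B$, so finite sums of commutators from $B$ are dense in $B$; as $\|\cdot\|_\alpha$ is a cross norm, it follows that the linear span of the elementary tensors $a \ot [c,d]$ ($a \in A$, $c, d \in B$) is dense in $A \ota B$. Consequently, once we show that every such $a \ot [c,d]$ lies in $\ol{[A \ot B, A \ot B]}$, the chain of inclusions $A \ota B = \ol{A \ot [B,B]} \seq \ol{[A \ot B, A \ot B]} \seq \ol{[A \ota B, A \ota B]} \seq A \ota B$ closes up and forces all three spaces to coincide.

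The heart of the matter is to realise $a \ot [c,d]$ as a limit of commutators without a unit in $A$. In \Cref{commutators-tf} one writes $[a,a']\ot b = [a \ot b,\, a' \ot 1]$, which uses the unit of the unital factor; here neither factor is assumed unital, and this is exactly where the $C^*$-hypothesis enters. Being a $C^*$-algebra, $A$ possesses a bounded approximate identity $\{e_\lambda\}$ with $\|e_\lambda\| \le 1$, obtained via continuous functional calculus. For fixed $a \in A$ and $c, d \in B$, a direct computation gives
\[ [a \ot c,\, e_\lambda \ot d] = a e_\lambda \ot cd - e_\lambda a \ot dc. \]
Since $a e_\lambda \to a$ and $e_\lambda a \to a$ in $A$, the cross-norm property yields $a e_\lambda \ot cd \to a \ot cd$ and $e_\lambda a \ot dc \to a \ot dc$ in $\|\cdot\|_\alpha$, whence the commutators on the left converge to $a \ot cd - a \ot dc = a \ot [c,d]$. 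Thus $a \ot [c,d] \in \ol{[A \ot B, A \ot B]}$, as required, and the two displayed equalities follow.

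Finally, the ``in particular'' clause is immediate: any tracial functional on $A \ota B$ vanishes on every commutator, hence on the dense subspace $[A \ota B, A \ota B]$ and so on all of $\ol{[A \ota B, A \ota B]} = A \ota B$; being identically zero it cannot be a tracial functional, so none exist. I expect the only delicate point to be the passage to the $\alpha$-closure in the limiting argument of the second paragraph, namely justifying that the convergence $a e_\lambda \to a$ in $A$ transfers to convergence of the corresponding elementary tensors in $A \ota B$. This is precisely what the cross-norm property of $\|\cdot\|_\alpha$ guarantees (and all four norms of interest are cross norms); for a merely submultiplicative algebra norm one would instead have to invoke continuity of the left and right slice maps.
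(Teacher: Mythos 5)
Your proof is correct, but its key device differs from the paper's. Both arguments share the same skeleton: invoke \Cref{tf-ts} to get $\ol{[B,B]}=B$, show that elementary tensors $a \ot [b,b']$ lie in (the closure of) $[A \ot B, A \ot B]$, and close the sandwich $A \ota B = \ol{A \ot [B,B]} \seq \ol{[A \ot B, A \ot B]} \seq \ol{[A \ota B, A \ota B]} \seq A \ota B$. Where you differ is in how non-unitality of $A$ is circumvented. The paper uses continuous functional calculus: for $a \geq 0$ it writes the \emph{exact} algebraic identity $a \ot [b,b'] = [a^{1/2} \ot b,\, a^{1/2} \ot b']$, and since positive elements span a $C^*$-algebra this yields $A \ot [B,B] \seq [A \ot B, A \ot B]$ with no limiting process at that stage. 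You instead realize $a \ot [c,d]$ as $\lim_\lambda [a \ot c,\, e_\lambda \ot d]$ for a bounded approximate identity $\{e_\lambda\}$ of $A$, which is only an approximate (closure-level) containment. Your route is slightly more general on the $A$ side --- it works for any Banach algebra $A$ with a bounded approximate identity, not just a $C^*$-algebra --- but it invokes the cross-norm property twice: once for the density of $A \ot [B,B]$ (which the paper's proof also needs, tacitly) and once more to push the approximate-identity limit through the tensor sign, whereas the paper's square-root trick needs it only for density. One small quibble: your closing remark that a general submultiplicative algebra norm could be handled via ``continuity of the left and right slice maps'' points at the wrong maps; what is actually needed there is continuity of the embeddings $x \mapsto x \ot y$ and $y \mapsto x \ot y$, which is exactly what the cross-norm (or any reasonable) hypothesis supplies, and without which even the density step common to both proofs breaks down.
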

\begin{proof}
 Since $B$ has no tracial states, $\ol{[B, B]} = B$ - see
 \Cref{tf-ts}. In particular, $A \ot {[B, B]} $ is dense in $A \ota
 B$. Now, for any $a \geq 0$ in $A$ and $b, b' \in B$, we see that
\[
a \ot [b, b'] = a \ot b b' - a \ot b' b = [a^{1/2} \ot b, a^{1/2} \ot
  b'] \in [A \ot B, A \ot B] \subseteq A \ot B.
\]
 This yields
\[
A \ota B = \ol{A \ot [B, B]} \subseteq \ol{[A \ot B, A \ot B]} \subseteq A \ota B,
\]
and we have the desired equality.
 
\end{proof}

Analogous to \Cref{commutators}, for pure algebras, we have the following:
\begin{lemma} \label{1A}
  Let $C$ and $D$ be algebras and suppose $D$ is
  unital. Then, \begin{equation}\label{commutator} [c, c'] \otimes
    [d,d'] \in [C\otimes D, C\otimes D ]
    \end{equation} for all $c, c'
  \in C$ and $d, d' \in D$. In particular, if both $C$ and $D$ are
  spanned by their commutators then so is $C \otimes D$, i.e.,
  $[C\otimes D, C\otimes D ] = C\otimes D.$
\end{lemma}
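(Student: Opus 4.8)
The plan is to establish \eqref{commutator} through a single explicit commutator identity that exploits the unit of $D$, and then to read off the ``in particular'' assertion from a routine spanning argument, exactly in the spirit of the proof of \Cref{commutators}.

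First I would record the elementary identity that, since $D$ is unital, for every $c, c' \in C$ and \emph{any} $e \in D$,
\[
[c \ot e, c' \ot 1] = cc' \ot e - c'c \ot e = [c,c'] \ot e .
\]
Specializing to $e = [d,d']$ gives $[c,c'] \ot [d,d'] = [c \ot [d,d'],\, c' \ot 1] \in [C \ot D, C \ot D]$, which is precisely \eqref{commutator}. Note that the sole function of unitality is to furnish the factor $c' \ot 1$; no hypothesis on $C$ is used at this stage, and the symmetric assumptions on $C$ and $D$ will enter only in the final step.

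For the second assertion, suppose that both $C$ and $D$ are spanned by their commutators, i.e. $C = \mathrm{span}\{[c,c'] : c,c' \in C\}$ and likewise for $D$. Then the elementary tensors $[c,c'] \ot [d,d']$ span $C \ot D$, since elementary tensors formed from spanning sets of the two factors always span the algebraic tensor product. By \eqref{commutator} each such generator lies in $[C \ot D, C \ot D]$, so $C \ot D \seq [C \ot D, C \ot D]$; as the reverse inclusion is automatic, this yields $[C \ot D, C \ot D] = C \ot D$.

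There is essentially no genuine obstacle here: the whole content is the displayed commutator identity, after which the result is immediate. The one point worth keeping in view is that establishing \eqref{commutator} requires only the unitality of $D$ and imposes no condition on $C$, so that the two spanning hypotheses are needed purely for the concluding step.
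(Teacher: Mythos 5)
Your proof is correct, but its key step is genuinely different from the paper's. You obtain \eqref{commutator} from a single commutator, $[c,c'] \ot [d,d'] = [c \ot [d,d'],\, c' \ot 1]$, whereas the paper verifies the three-term identity
\[
2[c\otimes 1_D,c' \otimes d d']-[c \otimes d, c' \otimes d']-[c \otimes d', c' \otimes d]= [c, c']\otimes [d, d'],
\]
and then concludes the spanning assertion exactly as you do. Your route is shorter and is in fact the same device the paper itself employs in the proof of \Cref{commutators} (there in the form $[a,a']\ot b = [a\ot b, a'\ot 1]$); moreover it yields something slightly stronger, namely $[c,c']\ot e \in [C\ot D, C\ot D]$ for \emph{every} $e \in D$, so for the final conclusion it would suffice that $C$ alone be spanned by its commutators, with $D$ merely unital --- your concluding step uses both spanning hypotheses, which is harmless but not necessary under your identity. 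What the paper's longer identity buys is of a different nature: it exhibits $[c,c']\ot[d,d']$ as a linear combination of commutators of elementary tensors whose legs are just $c, c', d, d', dd'$ and $1_D$, i.e.\ without ever forming a commutator inside a tensor leg first; that refinement is not needed for the statement as given, and both arguments are complete.
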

\begin{proof}
A straight forward calculation yields the equality
$$2[c\otimes 1_D,c' \otimes d d']-[c \otimes d, c' \otimes d']-[c
  \otimes d', c' \otimes d]= [c, c']\otimes [d, d'],$$ thereby
implying \Cref{commutator}. The second assertion  follows readily from
\Cref{commutator}. \end{proof}

Unlike $\M_n$, for an infinite dimensional Hilbert space $H$, $B(H)$
is unital and is spanned by its commutators (see \cite[page
  198]{hal}). We can immediately deduce the following:
\begin{corollary} \label{2A}
  For an infinite dimensional Hilbert space $H$, we have
  \begin{enumerate}
    \item $[B(H)\otimes B(H), B(H)\otimes B(H) ] = B(H)\otimes B(H).$
\item for any algebra norm $\|\cdot \|_{\alpha}$, the Banach algebra
  $B(H)\otimes^\alpha B(H)$ does not have any  tracial functional.
\end{enumerate}
  In particular,
  \[
  \ol{[B(H)\ot B(H), B(H)\ot B(H) ]} = \ol{[B(H)\ota B(H), B(H)\ota
      B(H) ]} =B(H)\ota B(H).
  \]
\end{corollary}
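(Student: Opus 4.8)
The plan is to derive all three assertions as immediate specializations of the preceding results, the only nontrivial input being the quoted fact that $B(H)$ is unital and spanned by its commutators. For (1) I would apply the second assertion of \Cref{1A} with $C = D = B(H)$; since $B(H)$ is unital and satisfies $[B(H), B(H)] = B(H)$, both hypotheses of that assertion hold, and it yields $[B(H) \ot B(H), B(H) \ot B(H)] = B(H) \ot B(H)$ at the purely algebraic level.

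For (2) the crucial preliminary step is to record that $B(H)$ admits no tracial functionals. Indeed, $[B(H), B(H)] = B(H)$ forces $\ol{[B(H), B(H)]} = B(H)$, so \Cref{tf-ts} gives $\mcal{T}(B(H)) = \emptyset$, and the Hahn--Banach equivalence noted there rules out tracial functionals as well. I would then invoke \Cref{commutators} with $A = B = B(H)$: here $A$ has no tracial functionals and $B$ is unital, which are precisely the required hypotheses. The lemma thereby delivers both that $B(H) \ota B(H)$ carries no tracial functional for every algebra norm $\| \cdot \|_\alpha$ (this is (2)) and the full chain $\ol{[B(H) \ot B(H), B(H) \ot B(H)]} = \ol{[B(H) \ota B(H), B(H) \ota B(H)]} = B(H) \ota B(H)$ (this is the ``in particular'' statement), whose leftmost term is in any case equal to $B(H) \ot B(H)$ already by (1).

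I do not anticipate any genuine obstacle, since the corollary merely packages the earlier lemmas for the specific algebra $B(H)$; the sole point requiring care is the verification that $B(H)$ has no tracial functionals, which rests on combining the cited fact that $B(H)$ is spanned by its commutators with the characterization in \Cref{tf-ts}, rather than on any new estimate or construction.
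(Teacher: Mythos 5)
Your proposal is correct and is essentially the paper's own (implicit) argument: the paper states this corollary as an immediate deduction from \Cref{1A} together with Halmos' theorem that $B(H)$ is unital and spanned by its commutators, and from \Cref{commutators} applied with $A=B=B(H)$, which is exactly the route you take. Your intermediate verification that $B(H)$ admits no tracial functionals (via \Cref{tf-ts}) is sound, though it could be obtained even more directly by noting that any tracial functional annihilates $[B(H),B(H)]=B(H)$ and is therefore zero, contradicting the requirement that tracial functionals be non-zero.
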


\section{Closed Lie ideals of $A \oh B, A \obp B$ and $A \oop B$}
From the existing literature on Lie ideals, it is evident that one of
the few tools available and well exploited to study Lie ideals, is the
notion of {\em Lie normalizer}, which was introduced by Murphy
(\cite{mur}) in 1984. (At this moment, we pause to remark that this is not
the standard terminology; since we are not aware of one, we are giving
it this name, hoping that it doesn't conflict with some existing
usage.)  For any subspace $S$ of an algebra $A$, its {\em Lie
  normalizer} is given by $$N(S) = \{ a \in A : [a, A] \subseteq
S\}.$$ Note that the Lie normalizer is different from the usual
normalizer $N_A(S)$, which is given by $N_A(S) = \{ a \in A: [a, S]
\subseteq S\}$. If $L$ is a Lie ideal of $A$, then
$N(L)$ is a  subalgebra as well as a Lie ideal of $A$
(\cite[Proposition 2.2]{bks}).  As usual, $\mcal{Z}(A)$ will denote
the center of an algebra $A$; note that
$N((0)) = \mcal{Z} (A)$.

The importance of the notion of Lie normalizer is reflected from the
extremely useful fact that if $A$ is a suitable Banach algebra
(\cite[Theorem 4.5]{GJ1}) or an arbitrary $C^*$-algebra
(\cite[Corollary 5.26 and Theorem 5.27]{bks}), then a closed subspace
$L$ of $A$ is a Lie ideal if and only if $$ \ol{[I, A]} \subseteq L
\subseteq N(\ol{[I,A]})$$ for some closed ideal $I$ in $A$.  All
ideals appearing in this article will be assumed to be two-sided.

The following elementary observation (which kind of appears on Page
3328 of \cite{hms} as well) turns out to be very useful in the
identification of  closed Lie ideals of certain tensor products of
$C^*$-algebras.
\begin{lemma}\label{NI}
Let $I$ be an ideal in an algebra $A$. Then, we have $$
N(I)=\pi^{-1}\left(\mcal{Z}(A/I)\right),
$$ where $\pi$ is the canonical quotient map from $A$ onto $A/I$. In
particular, $N(I)$ is always a subalgebra of $A$. Moreover, if $A$ is
a $*$-algebra, then so is $N(I)$ for any $*$-ideal $I$.
\end{lemma}

This immediately yields the following useful consequences.
\begin{corollary}\label{NI-banach}
Let $A$ be a unital Banach algebra with a unique non-trivial closed
ideal $I$. Then, $N(I) = \C 1 + I$. In particular, $\C 1 + I$ is a
closed Lie ideal.
\end{corollary}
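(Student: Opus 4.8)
The plan is to reduce the computation of $N(I)$ to a statement about the center of the quotient $A/I$ and then to show that this center is as small as possible. First I would invoke \Cref{NI}, which gives $N(I) = \pi^{-1}(\mcal{Z}(A/I))$ for the quotient map $\pi : A \ra A/I$. Since $A$ is unital, so is $A/I$, with $\pi(1) = 1_{A/I}$; moreover, by the correspondence between closed ideals of $A/I$ and closed ideals of $A$ containing $I$, the hypothesis that $I$ is the \emph{unique} non-trivial closed ideal of $A$ forces $A/I$ to be \emph{topologically simple}, i.e.\ its only closed two-sided ideals are $(0)$ and $A/I$ itself. Thus everything comes down to proving that the center of a unital, topologically simple Banach algebra is exactly the scalars.

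To this end I would take an arbitrary $z \in \mcal{Z}(A/I)$ and argue that $z$ is a scalar multiple of $1_{A/I}$. Since $A/I$ is a unital Banach algebra, the spectrum $\sigma(z)$ is a non-empty compact subset of $\C$, so its topological boundary $\partial\sigma(z)$ is non-empty; fix $\lambda \in \partial\sigma(z)$. The element $w := z - \lambda 1_{A/I}$ is central and, being associated with a boundary point of the spectrum, is a topological divisor of zero: there are unit vectors $x_n$ with $w x_n \ra 0$. Because $w$ is central, $w(A/I)$ is a two-sided ideal, and its closure $J := \ol{w(A/I)}$ is a closed two-sided ideal of $A/I$.

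The key step is to show $J \neq A/I$, after which topological simplicity forces $J = (0)$. For this I would exploit the topological-divisor-of-zero property: for every $a \in A/I$, centrality gives $w a x_n = a w x_n \ra 0$, whence $\|(wa - 1)x_n\| = \|a w x_n - x_n\| \geq 1 - \|a w x_n\| \ra 1$, and therefore $\|wa - 1_{A/I}\| \geq 1$ for all $a$. Consequently $1_{A/I} \notin J$, so $J$ is a proper closed ideal; by simplicity $J = (0)$, and applying this to $1_{A/I} \in A/I$ yields $w = 0$, i.e.\ $z = \lambda 1_{A/I}$. Hence $\mcal{Z}(A/I) = \C 1_{A/I}$ and $N(I) = \pi^{-1}(\C 1_{A/I}) = \C 1 + I$. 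Finally, $\C 1 + I$ is closed, being the sum of the closed subspace $I$ and the finite-dimensional subspace $\C 1$, and it is a Lie ideal since $[\lambda 1 + i, a] = [i, a] \in I \seq \C 1 + I$ for all $a \in A$ (equivalently, because $N(I)$ is a Lie ideal whenever $I$ is). The main obstacle is the middle step, namely establishing that a boundary point of the spectrum yields a topological divisor of zero and leveraging it to keep the ideal $J$ proper; everything else is bookkeeping.
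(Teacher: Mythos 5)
Your proposal is correct and follows the same route as the paper: both reduce the problem via \Cref{NI} to computing $\mcal{Z}(A/I)$ and then pulling back under $\pi$, with closedness being immediate (you via ``closed $+$ finite-dimensional'' subspaces, the paper via continuity of $\pi$). The only substantive difference is that the paper simply asserts that the simple unital Banach algebra $A/I$ has center $\C(1+I)$, whereas you supply a complete proof of exactly this fact --- and of the version actually needed here, namely for \emph{topological} simplicity, which is all the hypothesis gives --- using the standard boundary-of-spectrum/topological-divisor-of-zero argument; that argument is sound, so your write-up is, if anything, more self-contained than the paper's.
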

\begin{proof}
Since $A/I$ is a simple unital Banach algebra, we have $\mcal{Z}(A/I)
= \C (1 + I)$, and hence $$N(I) =\pi^{-1} \left( \mcal{Z}(A/I) \right)
= \pi^{-1}(\C (1 + I)) = \C 1 + I.$$ Since $\C (1 + I)$ is closed in
$A/I$, so is $\pi^{-1}(\C (1 + I)) = \C 1 + I$ in $A$. \end{proof}

\begin{corollary}\label{4A}
  For an infinite dimensional separable Hilbert space $H$, we have
$$ N(K(H))= \C 1 + K(H),$$ where $K(H)$ denotes the closed ideal of $B(H)$
  consisting of compact operators on $H$.
\end{corollary}
This yields an operator algebraic proof of the following classical 
(operator theoretic) result of Fong-Miers-Sorour (\cite{fms}):
\begin{corollary}
  Let $H$ be an infinite dimensional separable Hilbert space $H$. Then
  the only non-trivial closed Lie ideals of $B(H)$ are $ \C 1, K(H)$
  and $ \C 1 + K(H)$.
 \end{corollary}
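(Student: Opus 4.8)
The plan is to deduce this classical result of Fong--Miers--Sorour directly from the general Lie-ideal machinery already assembled. The key external fact, quoted in the excerpt, is that a closed subspace $L$ of a $C^*$-algebra $A$ is a Lie ideal if and only if there is a closed ideal $I$ with $\ol{[I,A]} \subseteq L \subseteq N(\ol{[I,A]})$. First I would recall the well-known structure of the closed ideals of $B(H)$ for $H$ infinite dimensional and separable: the only closed ideals are $(0)$, $K(H)$, and $B(H)$. Thus the closed ideal $I$ furnished by the structure theorem can only be one of these three, and I would run through each case.

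For each choice of $I$ I need to compute the two bracketing subspaces $\ol{[I,A]}$ and $N(\ol{[I,A]})$, where $A = B(H)$. When $I = (0)$ we get $\ol{[I,A]} = (0)$, and $N((0)) = \mcal{Z}(B(H)) = \C 1$, so the only closed Lie ideals trapped between them are $(0)$ and $\C 1$. When $I = B(H)$, \Cref{2A} (applied with one factor, or rather the cited fact that $B(H)$ is spanned by its commutators, \cite[page 198]{hal}) gives $\ol{[B(H),B(H)]} = B(H)$, forcing $L = B(H)$, which is trivial. The interesting case is $I = K(H)$: here I would argue that $\ol{[K(H),B(H)]} = K(H)$, using that $K(H)$ is itself spanned by commutators (equivalently $K(H)$ admits no tracial state, so $\ol{[K(H),K(H)]}=K(H)$ by \Cref{tf-ts}, and $[K(H),K(H)]\subseteq [K(H),B(H)]\subseteq K(H)$). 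Then \Cref{4A} supplies $N(K(H)) = \C 1 + K(H)$, so every closed Lie ideal $L$ in this case satisfies $K(H) \subseteq L \subseteq \C 1 + K(H)$.

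The final step is to extract, from the sandwich $K(H) \subseteq L \subseteq \C 1 + K(H)$, exactly which closed subspaces $L$ occur. Since the quotient $(\C 1 + K(H))/K(H)$ is one-dimensional, $L/K(H)$ is either $(0)$ or the whole line, giving $L = K(H)$ or $L = \C 1 + K(H)$; both are closed and both are genuine closed Lie ideals (the latter by \Cref{4A}, which records that $\C 1 + K(H) = N(K(H))$ is a closed Lie ideal). Collecting the non-trivial proper closed Lie ideals across all three cases for $I$ yields precisely $\C 1$, $K(H)$, and $\C 1 + K(H)$, as claimed.

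I do not expect a serious obstacle here, since every ingredient is already in place; the only point requiring mild care is verifying $\ol{[K(H),B(H)]} = K(H)$ rather than something smaller. The containment $\ol{[K(H),B(H)]} \subseteq K(H)$ is automatic because $K(H)$ is an ideal, so the content is the reverse containment, which I would get from $K(H)$ being spanned by its own commutators (no tracial state) together with $[K(H),K(H)] \subseteq [K(H),B(H)]$. With that identity confirmed, the case analysis closes immediately.
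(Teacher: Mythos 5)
Your proof is correct and follows essentially the same route as the paper: both invoke the sandwich theorem (a closed Lie ideal $L$ satisfies $\ol{[I,B(H)]} \subseteq L \subseteq N(\ol{[I,B(H)]})$ for some closed ideal $I$), enumerate the three closed ideals $(0)$, $K(H)$, $B(H)$, and use $N(K(H)) = \C 1 + K(H)$ to close the case analysis. The only minor difference is that you establish $\ol{[K(H),B(H)]} = K(H)$ via the absence of tracial states on $K(H)$ together with the Cuntz--Pedersen characterization, whereas the paper obtains $\ol{[I,B(H)]} = I$ uniformly from \cite[Lemma 4.2]{GJ1} and the fact that $B(H)$ is spanned by its commutators.
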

\begin{proof}
Clearly $\C 1, K(H)$ and $\C 1 + K(H)$ are closed Lie ideals of
$B(H)$. Conversely, if $L$ is a closed Lie ideal of $B(H)$, then by
\cite[Theorem 5.27]{bks}, there exists a closed ideal $I$ of $B(H)$
such that $\ol{[I, B(H)]} \subseteq L \subseteq N(\overline{[I, B(H)
]})$. Since $B(H) = [B(H), B(H)]$, by \cite[Lemma 4.2]{GJ1}, we have
$\ol{[I, B(H)]} = I$. Thus, $I \subseteq L \subseteq N(I)$ and
depending upon the three possible choices of $I$, namely, $(0), K(H)$ and
$B(H)$, we see that $L$ has only above possibilities.
  \end{proof}

We now identify Lie normalizer of sum of product ideals of $A\oh B$.

\begin{proposition}\label{NK}
   Let $I$ and $J$ be closed ideals in $C^*$-algebras $A$ and
   $B$, respectively. Then for $K = A \oh J + I \oh B$, we have 
   \begin{equation}\label{NK-eqn} N(K) = K+ N(I) \oh N(J).
   \end{equation} And, $N(I \oh J) \subseteq N(I) \oh N(J)$.
    \end{proposition}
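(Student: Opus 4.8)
The plan is to reduce the equality \Cref{NK-eqn} to a computation of the centre of a Haagerup tensor product, and then to carry out both this computation and the second inclusion by means of slice maps. Throughout, by injectivity of $\oh$ I regard $N(I)\oh N(J)$, $A\oh J$, $I\oh B$ and $I\oh J$ as closed subspaces of $A\oh B$; in particular $K = A\oh J + I\oh B$ is a closed ideal, equal to the kernel of the map $\pi_A\oh\pi_B$ below.

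For \Cref{NK-eqn}, by \Cref{NI} we have $N(K) = \pi^{-1}(\mcal Z((A\oh B)/K))$, where $\pi$ is the quotient map. The projectivity of the Haagerup tensor product (see \cite{ER}) identifies $\pi$ with $\pi_A\oh\pi_B\colon A\oh B\to\bar A\oh\bar B$, where $\bar A = A/I$, $\bar B = B/J$ and $\pi_A,\pi_B$ are the quotient maps, so that $\ker\pi = K$. Since $N(I) = \pi_A^{-1}(\mcal Z(\bar A))$ and $N(J) = \pi_B^{-1}(\mcal Z(\bar B))$ by \Cref{NI}, the restrictions of $\pi_A,\pi_B$ are complete quotient maps onto $\mcal Z(\bar A),\mcal Z(\bar B)$, whence projectivity once more gives $\pi\big(N(I)\oh N(J)\big) = \mcal Z(\bar A)\oh\mcal Z(\bar B)$. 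Granting the centre identity $\mcal Z(\bar A\oh\bar B) = \mcal Z(\bar A)\oh\mcal Z(\bar B)$, pulling back through $\pi$ then yields $N(K) = \pi^{-1}(\pi(N(I)\oh N(J))) = K + N(I)\oh N(J)$.

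It remains to prove the centre identity. The inclusion $\supseteq$ is immediate: any $a\ot b$ with $a\in\mcal Z(\bar A)$, $b\in\mcal Z(\bar B)$ commutes with every elementary tensor, hence with all of $\bar A\oh\bar B$, and the centre is closed. For $\subseteq$ I would use the left and right slice maps $L_\phi$ ($\phi\in\bar A^*$) and $R_\psi$ ($\psi\in\bar B^*$), which are bounded on $\oh$ and satisfy $R_\psi\big((c\ot1)w\big) = c\,R_\psi(w)$ and $R_\psi\big(w(c\ot1)\big) = R_\psi(w)\,c$, together with the symmetric identities for $L_\phi$. Embedding $\bar A\oh\bar B$ into $M(\bar A)\oh M(\bar B)$ (injectivity again), a central $z$ satisfies $(c\ot1)z = z(c\ot1)$ and $(1\ot d)z = z(1\ot d)$, so applying $R_\psi$ gives $[c,R_\psi(z)] = 0$ for all $c$, i.e. $R_\psi(z)\in\mcal Z(\bar A)$, and likewise $L_\phi(z)\in\mcal Z(\bar B)$. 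The conclusion $z\in\mcal Z(\bar A)\oh\mcal Z(\bar B)$ then follows from the \emph{slice map property} of the Haagerup tensor product: for closed subspaces $E_0\seq E$, $F_0\seq F$ one has $E_0\oh F_0 = \{w\in E\oh F : R_\psi(w)\in E_0,\ L_\phi(w)\in F_0 \text{ for all } \phi,\psi\}$.

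For the second inclusion $N(I\oh J)\seq N(I)\oh N(J)$ I would argue directly. Given $x$ with $[x,A\oh B]\seq I\oh J$, a bounded approximate identity of $B$ shows (computing inside $M(A)\oh M(B)$) that $[x,a'\ot1]\in I\oh J$ for every $a'\in A$; applying $R_\psi$ and the slice identities gives $[R_\psi(x),a'] = R_\psi([x,a'\ot1])\in R_\psi(I\oh J)\seq I$, so $R_\psi(x)\in N(I)$ for every $\psi$, and symmetrically $L_\phi(x)\in N(J)$ for every $\phi$. The slice map property, now applied to the closed subspaces $N(I)\seq A$ and $N(J)\seq B$, forces $x\in N(I)\oh N(J)$. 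I expect this slice map property to be the main obstacle, as it is the only genuinely non-formal ingredient; the remaining difficulty is the non-unital book-keeping, handled throughout by passing to multiplier algebras and taking approximate-identity limits.
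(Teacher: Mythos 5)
Your proposal is correct in substance and, for the identity (\ref{NK-eqn}), follows the same skeleton as the paper: both reduce $N(K)$ via \Cref{NI} to the preimage under $\pi_I \oh \pi_J$ of $\mcal{Z}\big((A/I)\oh (B/J)\big)$ and then use the centre identity $\mcal{Z}\big((A/I)\oh(B/J)\big) = \mcal{Z}(A/I)\oh \mcal{Z}(B/J)$. The differences lie in the supporting lemmas. Where the paper quotes \cite[Theorem 2.13]{sinc} for the centre identity and \cite[Theorem 2.4]{sinc} to compute the preimage (which also yields closedness of $N(I)\oh N(J)+K$), you re-prove the centre identity with slice maps and replace the preimage computation by the observation that $\pi_I\oh\pi_J$ restricts to a complete quotient map of $N(I)\oh N(J)$ onto $\mcal{Z}(A/I)\oh\mcal{Z}(B/J)$, after which the elementary identity $\pi^{-1}(\pi(S)) = S + \ker\pi$ finishes the job; this is legitimate and somewhat more self-contained. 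For the inclusion $N(I\oh J)\seq N(I)\oh N(J)$ your route is genuinely different: the paper sandwiches $N(I\oh J)$ inside $N(A\oh J)\cap N(I\oh B)$ and applies the intersection formula \cite[Corollary 4.6]{smi}, whereas you slice directly, showing $R_\psi(x)\in N(I)$ and $L_\phi(x)\in N(J)$ and then invoking Smith's slice-map (Fubini) theorem for $\oh$. Since that theorem is precisely what underlies the paper's cited Corollary 4.6, both arguments rest on the same deep fact; yours is arguably cleaner and handles the non-unital case explicitly, the paper's is shorter given its citations.

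One point needs repair. You assert that $K = A\oh J + I\oh B$ is closed and equals $\ker(\pi_I\oh\pi_J)$ essentially ``by injectivity.'' Injectivity gives that each summand is closed; it does not give that their sum is closed (sums of closed subspaces of a Banach space need not be closed), and projectivity/functoriality only identifies $\ker(\pi_I\oh\pi_J)$ with the \emph{closure} $\ol{A\ot J + I\ot B}$. This matters: \Cref{NI} must be applied to the kernel of the quotient map, so if $K$ were strictly smaller than its closure your argument would compute $N(\ol{K})$ rather than $N(K)$. The fact that a sum of product ideals is already closed and coincides with this kernel is a genuine theorem of Allen--Sinclair--Smith, namely \cite[Theorem 2.4 and Corollary 2.6]{sinc}, which is exactly what the paper cites at this step; with that citation inserted, your proof is complete.
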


\begin{proof}
  It is known that $K$ is a closed ideal in $C:=A \oh B$, the
  quotient map $\pi_I \oh \pi_J: A \oh B \ra (A/I) \oh (B/J)$ has
  kernel $K$ and it induces an isometric algebra isomorphism
  $\ol{\pi_I \oh \pi_J} : C/K \ra (A/I) \oh (B/J)$ - see
  \cite[Corollary 2.6]{sinc}. In particular, $\mcal{Z}(C/K) = \Big(\ol{\pi_I \oh
    \pi_J}\Big)^{\ -1}\Big(\mcal{Z}\big( (A/I) \oh (B/J)\big)\Big)$. Also, by
  \cite[Theorem 2.13]{sinc}, we have $\mcal{Z}\big( (A/I) \oh
  (B/J)\big) = \mcal{Z}( A/I) \oh \mcal{Z}(B/J)$ canonically. So,
  \begin{eqnarray*}
      \mcal{Z}(C/K) & = &\Big(\ol{\pi_I\oh \pi_J}\Big)^ {\ -1} \Big(
      \mcal{Z}(A/I) \oh \mcal{Z}(B/J) \Big) \\
      & = &     \Big({(\pi_I\oh \pi_J)}^ {\ -1} \left( \mcal{Z}(A/I) \oh
      \mcal{Z}(B/J) \right) \Big)/ K \\
      & = & \Big( \big(\pi_I^{-1} (0 + I)
           \oh B \big) + \big( A \oh \pi_J^{-1}(0 + J)\big) +
           \big(\pi_I^{-1} (\mcal{Z}(A/I))
           \oh \pi_J^{-1}(\mcal{Z}(B/J)) \big)\Big) / K \\ & = & \Big(I
           \oh B + A\ota J + N(I) \oh N(J)\Big)/ K
           \\ & = & \Big( N(I) \oh N(J) + K \Big)/ K,
  \end{eqnarray*}
  where the third equality follows from \cite[Theorem 2.4]{sinc},
  which also yields that $N(I) \oh N(J) + K$ is closed in $A
  \oh B$. In particular, if $\pi_K: C \ra C/K$ is the natural quotient
  map, then we obtain \begin{eqnarray*} \pi_K^{-1}(\mcal{Z}(C/K)) & =
    & \pi_K^{-1} \Big( \big(N(I) \oh N(J) + K\big) /K \Big)
    \\ & = & N(I) \oh N(J) + K,
  \end{eqnarray*}
  and by \Cref{NI}, we have the desired relation for $N(K)$.
  \vspace*{1mm}

  Next, substituting $J = (0)$ in $K$, we obtain $N (I \oh B) = I \oh
  B + N(I) \oh \mcal{Z}(B)$ and likewise $N( A \oh J) = A \oh J +
  \mcal{Z}(A) \oh N(J)$. Thus,
  \begin{eqnarray*}
    N(I \oh J) & \subseteq & N( A \oh J) \cap N (I \oh B) \\ & =
    &\left( A \oh J + \mcal{Z}(A) \oh N(J) \right) \cap \left( I \oh B
    + N(I) \oh \mcal{Z}(B) \right) \\
    & \subseteq & (A \oh N(J)) \cap (N(I) 
    \oh B) \\ & = & N(I) \oh N(J),
  \end{eqnarray*}
  where injectivity of $\oh$ has been used extensively and the
  equality in the last step follows from the fact that $(E_1 \oh F_1 )
  \cap (E_2 \oh F_2) = (E_1 \cap E_2) \oh (F_1 \cap F_2)$ for closed
  subspaces $E_i, F_i$, of $C^*$-algebras $A_i, B_i$, $i =1, 2$,
  respectively (see \cite[Corollary 4.6]{smi}).
\end{proof}

\begin{remark}
In general, $N(I) \oh N(J) \neq N(I \oh J)$. For instance, if we take
$I= (0)$ and $J$ to be some non-trivial closed ideal in unital
$C^*$-algebras $A$ and $B$, respectively, then we have $$N(I \oh
J)=N((0))= \mathcal{Z}(A \oh B) = \mathcal{Z}(A) \oh \mathcal{Z}(B),$$
where the last equality follows from \cite[Theorem 2.13]{sinc},
whereas $N(I) \oh N(J)= \mathcal{Z}(A) \oh N(J) \supseteq
\mathcal{Z}(A) \oh (J + \mathcal{Z}(B)) $, which may very well be
strictly larger than $\mathcal{Z}(A) \oh \mathcal{Z}(B)$. {Taking
  $A=B=B(H)$ and $J= K(H)$ for an infinite dimensional separable
  Hilbert space $H$ provides one such instance.}
  \end{remark}

By \cite[Theorem 3]{jk08}, we just have an algebraic isomorphism
between $\mcal{Z}(A \oop B)$ and $\mcal{Z}(A) \oop \mcal{Z}(B)$ and an
analogue of \cite[Theorem 2.4]{sinc} for $\oop$ is not known.  In order to
establish the analagoue of expression (\ref{NK-eqn}) of Lie normalizer of a sum of
product ideals in $A \oop B$, we need some preparation.

 Recall, from \cite[Proposition 7.1.7]{ER}, that for operator spaces
 $V_i, W_i$, $i =1, 2$ and complete quotient maps $\varphi_i : V_i
 \rightarrow W_i$, $i = 1,2$, the tensor map $\varphi_1 \ot \varphi_2$
 extends to a complete quotient map $\varphi_1 \oop \varphi_2 : V_1
 \oop V_2 \ra W_1 \oop W_2$ and $K:=\ker(\varphi_1 \oop \varphi_2) =
 \ol{V_1 \ot \ker(\varphi_2) + \ker(\varphi_1) \ot V_2}$. Thus,
 $\varphi_1 \oop \varphi_2$ induces a bijective continuous map
 $\ol{\varphi_1 \oop \varphi_2} : (V_1 \oop V_2)/K \ra W_1 \oop
 W_2$. By Open Mapping Theorem, $\ol{\varphi_1 \oop \varphi_2}$
 becomes bicontinuous, i.e., $\Big(\ol{\varphi_1 \oop
   \varphi_2}\Big)^{-1}$ is also continuous. We will need the
 following analogue of \cite[Theorem 2.4]{sinc}.

 \begin{lemma}\label{ker-phi-ot-psi}
 Let $V_i, W_i$, $i =1, 2$ be operator spaces, $\varphi_i : V_i
 \rightarrow W_i$, $i = 1,2$ be complete quotient maps and  $E_i$ be
 a closed subspace of $W_i$ for $i=1,2$.  Then, \begin{equation}\label{ker-eqn}
   \Big(\ol{\varphi_1 \oop
   \varphi_2}\Big)^{-1} \big( \ol{E_1 \ot E_2}\big) = \Big(\ol{
   \varphi_1^{-1}(0) \ot V_2 + V_1 \ot \varphi_2^{-1}(0) +
   \varphi_1^{-1}(E_1) \ot \varphi_2^{-1}(E_2)}\Big)/K.
 \end{equation} 
  \end{lemma}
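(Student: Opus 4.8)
The plan is to transfer the entire statement to the level of the complete quotient map $\Phi := \varphi_1 \oop \varphi_2 : V_1 \oop V_2 \ra W_1 \oop W_2$ and to lean on the single structural fact recorded just above the lemma: the induced map $\ol\Phi := \ol{\varphi_1 \oop \varphi_2} : (V_1 \oop V_2)/K \ra W_1 \oop W_2$ is a \emph{linear homeomorphism} (bijective and, by the Open Mapping Theorem, bicontinuous). Write $\pi_K : V_1 \oop V_2 \ra (V_1 \oop V_2)/K$ for the quotient map, so that $\Phi = \ol\Phi \circ \pi_K$. Set $N_i := \varphi_i^{-1}(0) = \ker \varphi_i$ and $F_i := \varphi_i^{-1}(E_i)$, each a closed subspace of $V_i$ (preimage of a closed set under a continuous map), and let $M := \ol{N_1 \ot V_2 + V_1 \ot N_2 + F_1 \ot F_2}$ be the subspace whose image $M/K$ is the claimed right-hand side. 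Since $K = \ol{N_1 \ot V_2 + V_1 \ot N_2} \seq M$, the quotient $M/K = \pi_K(M)$ is a genuine closed subspace of $(V_1 \oop V_2)/K$.

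First I would extract the one algebraic input. Each $\varphi_i$, being a complete quotient map, is surjective, so $\varphi_i(F_i) = E_i$ and hence, on the algebraic tensor product, $\Phi(F_1 \ot F_2) = E_1 \ot E_2$. Because $\ol\Phi$ is injective and $\Phi = \ol\Phi \circ \pi_K$, this yields $\ol\Phi^{-1}(E_1 \ot E_2) = \pi_K(F_1 \ot F_2)$. Now I would invoke that $\ol\Phi$ is a homeomorphism, and therefore commutes with topological closure, to get $\ol\Phi^{-1}\big(\ol{E_1 \ot E_2}\big) = \ol{\ol\Phi^{-1}(E_1 \ot E_2)} = \ol{\pi_K(F_1 \ot F_2)}$. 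Thus the whole problem reduces to the purely quotient-space identity $\ol{\pi_K(F_1 \ot F_2)} = M/K$.

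To finish I would identify $M/K$ as the smallest closed subspace of $(V_1 \oop V_2)/K$ containing $\pi_K(F_1 \ot F_2)$. On one hand $M/K$ is closed and contains $\pi_K(F_1 \ot F_2)$, so $\ol{\pi_K(F_1 \ot F_2)} \seq M/K$. On the other hand, if $Y$ is any closed subspace of the quotient containing $\pi_K(F_1 \ot F_2)$, then $\pi_K^{-1}(Y)$ is a closed subspace of $V_1 \oop V_2$ containing both $F_1 \ot F_2$ and $K = \pi_K^{-1}(0)$, hence containing $N_1 \ot V_2 + V_1 \ot N_2 + F_1 \ot F_2$ and, being closed, its closure $M$; applying the surjection $\pi_K$ gives $M/K \seq Y$. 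Taking $Y = \ol{\pi_K(F_1 \ot F_2)}$ produces the reverse inclusion, so $\ol{\pi_K(F_1 \ot F_2)} = M/K$, completing the argument.

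The computation carries no serious obstacle once the setup is assembled; the one point demanding care — and the reason one cannot simply transplant the Haagerup argument of \cite[Theorem 2.4]{sinc} — is that $\oop$ is \emph{not} injective, so closures cannot be pushed through the individual tensor factors. The argument avoids this entirely by never manipulating a closure inside $\oop$: every closure is moved either across the homeomorphism $\ol\Phi$ or across the open continuous surjection $\pi_K$, both of which interact cleanly with topological closure, so that all the operator-space content is confined to the pre-lemma paragraph that supplies $\ker(\varphi_1 \oop \varphi_2) = K$ and the bicontinuity of $\ol\Phi$. I would only double-check the two routine facts that $\varphi_i^{-1}(E_i)$ is closed and that $\ol\Phi^{-1}$ is continuous, both of which are already available.
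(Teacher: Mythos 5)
Your proposal is correct and is essentially the paper's own argument: both proofs rest on exactly the two facts recorded in the paragraph preceding the lemma --- the kernel formula $K=\ker(\varphi_1\oop\varphi_2)$ and the bicontinuity of $\ol{\varphi_1\oop\varphi_2}$ obtained from the Open Mapping Theorem --- together with the algebraic observation that, by surjectivity and injectivity, $\big(\ol{\varphi_1\oop\varphi_2}\big)^{-1}(E_1\ot E_2)=\pi_K\big(\varphi_1^{-1}(E_1)\ot\varphi_2^{-1}(E_2)\big)$. The only difference is presentational: the paper establishes the two inclusions by explicit sequence-chasing, invoking continuity of $\beta$ and of $\beta^{-1}$ separately, whereas you compress the same content into the general facts that a homeomorphism commutes with closure and that $\ol{\pi_K(S)}=\ol{S+K}/K$ for a subspace $S$ (a step the paper uses implicitly when it passes between closures upstairs and in the quotient).
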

 \begin{proof}
   Let $x \in \ol{E_1 \ot E_2} $ and set $\beta=\ol{\varphi_1 \oop
     \varphi_2}$. Choose a sequence $\{x_n = \sum_{i=1}^{k_n} e_{i, n}
   \ot f_{i,n} \}_n$ in $ E_1 \ot E_2$ converging to $x$. For each
   pair $(i, n)$, where $ 1 \leq i \leq k_n$ and $n \geq 1$, fix
   $y_{i,n} \in \varphi_1^{-1}(\{e_{i,n}\})$ and $z_{i,n} \in
   \varphi_2^{-1}(\{f_{i,n}\})$. Then, $\beta^{-1}(x_n)=
   \sum_{i=1}^{k_n} (y_{i,n} \ot z_{i,n}) + K$, by very definition of
   $\beta$, so that $\beta^{-1}(x_n) \in \big(\varphi_1^{-1}(E_1) \ot
   \varphi_2^{-1}(E_2) + K\big)/K$ for all $n$. Since $\beta^{-1}$ is
   continuous, $ \beta^{-1}{x} = \lim_{n \rightarrow \infty}
   \beta^{-1}(x_n) \in \Big(\ol{\varphi_1^{-1}(E_1) \ot
     \varphi_2^{-1}(E_2) + K}\Big )/ K, $ thereby proving
   that $$\beta^{-1}\Big(\ol{E_1 \ot E_2}\Big) \subseteq
   \Big(\ol{\varphi_1^{-1}(E_1) \ot \varphi_2^{-1}(E_2) + K}\Big) / K
   .$$

Next, let $y +K \in \Big( \ol{\varphi_1^{-1}(E_1) \ot
  \varphi_2^{-1}(E_2) + K}\Big) /K$. Fix a sequence $\{y_n =
\sum_{i=1}^{k_n} y_{i,n} \ot z_{i,n} \}$ in $ \varphi_1^{-1}(E_1) \ot
\varphi_2^{-1}(E_2) $ such that $y_n+K \ra y + K$ in $(V_1 \oop
V_2)/K$. Then, $\beta(y_n+K)= \sum_{i=1}^{k_n} \varphi_1(y_{i,n}) \ot
\varphi_2(z_{i,n}) \in E_1 \ot E_2 $ for all $n$; so that $\beta(y +
K) = \lim_n \beta(y_n + K) \in \ol{E_1 \ot E_2}$ which shows that $y +
K \in \beta^{-1}(\ol{E_1 \ot E_2})$. \end{proof}

For Banach spaces $X_i, Y_i$, $i =1, 2$ and quotient maps $\varphi_i :
X_i \rightarrow Y_i$, $i = 1,2$, the tensor map $\varphi_1 \ot
\varphi_2$ extends to a quotient map $\varphi_1 \obp \varphi_2 : X_1
\obp X_2 \ra Y_1 \obp Y_2$ (see \cite[Proposition 2.5]{ryan}) and
 $K:=\ker(\varphi_1 \obp \varphi_2) = \ol{V_1 \ot \ker(\varphi_2) +
  \ker(\varphi_1) \ot V_2}$ - see \cite[Proposition 3.10]{GJ2}. Thus,
$\varphi_1 \obp \varphi_2$ induces a bijective continuous map
$\ol{\varphi_1 \obp \varphi_2} : (X_1 \obp X_2)/K \ra Y_1 \obp Y_2$.
Note that no specific property of $\| \cdot \|_{\wedge}$ was used in
the proof of \Cref{ker-phi-ot-psi}. So, the same proof works for
$\|\cdot \|_{\gamma}$ as well, and we have:

  \begin{lemma}\label{ker-phi-obp-psi}
 Let $X_i, Y_i$, $i =1, 2$ be Banach  spaces, $\varphi_i : X_i
 \rightarrow Y_i$, $i = 1,2$ be quotient maps and  $E_i$ be
 a closed subspace of $Y_i$, $i=1,2$.  Then, \begin{equation}\label{ker-eqn-obp}
   \big(\ol{\varphi_1 \obp
   \varphi_2}\big)^{-1} \big( \ol{E_1 \ot E_2}\big) = \Big(\ol{
   \varphi_1^{-1}(0) \ot V_2 + V_1 \ot \varphi_2^{-1}(0) +
   \varphi_1^{-1}(E_1) \ot \varphi_2^{-1}(E_2)}\Big)/K.
 \end{equation} 
  \end{lemma}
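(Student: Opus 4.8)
The plan is to transcribe verbatim the two-inclusion argument of \Cref{ker-phi-ot-psi}, replacing the operator space projective tensor product by $\obp$, and to check only that each ingredient survives the replacement. First I would record the structural facts for $\obp$ that play the role of \cite[Proposition 7.1.7]{ER}: by \cite[Proposition 2.5]{ryan} the tensor map $\varphi_1 \obp \varphi_2 : X_1 \obp X_2 \ra Y_1 \obp Y_2$ is again a quotient map, and by \cite[Proposition 3.10]{GJ2} its kernel is $K = \ol{\ker(\varphi_1) \ot X_2 + X_1 \ot \ker(\varphi_2)}$. Consequently the induced bijective continuous map $\beta := \ol{\varphi_1 \obp \varphi_2} : (X_1 \obp X_2)/K \ra Y_1 \obp Y_2$ is, by the Open Mapping Theorem, bicontinuous, so that both $\beta$ and $\beta^{-1}$ are continuous. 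This is the only place where one needs anything special about the norm, and it is supplied entirely by the cited results.

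For the inclusion $\subseteq$, I would take $x \in \ol{E_1 \ot E_2}$ and a sequence $x_n = \sum_{i=1}^{k_n} e_{i,n} \ot f_{i,n}$ in $E_1 \ot E_2$ with $x_n \ra x$. Using surjectivity of $\varphi_1$ and $\varphi_2$, I would lift each elementary tensor, choosing $y_{i,n} \in \varphi_1^{-1}(\{e_{i,n}\})$ and $z_{i,n} \in \varphi_2^{-1}(\{f_{i,n}\})$, so that $\beta^{-1}(x_n) = \sum_i (y_{i,n} \ot z_{i,n}) + K$ lies in $\big(\varphi_1^{-1}(E_1) \ot \varphi_2^{-1}(E_2) + K\big)/K$. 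Continuity of $\beta^{-1}$ then pushes the limit into the closure, giving $\beta^{-1}(x) \in \big(\ol{\varphi_1^{-1}(E_1) \ot \varphi_2^{-1}(E_2) + K}\big)/K$, which coincides with the right-hand side of \Cref{ker-eqn-obp} once the generators $\varphi_1^{-1}(0) \ot X_2 + X_1 \ot \varphi_2^{-1}(0)$ of $K$ are reabsorbed into the closure. The reverse inclusion $\supseteq$ is the mirror image: given $y + K$ in the right-hand side, I would pick a sequence $y_n = \sum_i y_{i,n} \ot z_{i,n}$ in $\varphi_1^{-1}(E_1) \ot \varphi_2^{-1}(E_2)$ with $y_n + K \ra y + K$, observe $\beta(y_n + K) = \sum_i \varphi_1(y_{i,n}) \ot \varphi_2(z_{i,n}) \in E_1 \ot E_2$, and invoke continuity of $\beta$ to conclude $\beta(y + K) \in \ol{E_1 \ot E_2}$, i.e.\ $y + K \in \beta^{-1}(\ol{E_1 \ot E_2})$.

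Since neither inclusion uses any feature of $\|\cdot\|_\wedge$ beyond surjectivity of the $\varphi_i$, the explicit form of $K$, and the bicontinuity of $\beta$ --- all of which hold equally for $\obp$ --- the argument carries over without change. The only genuine content, and hence the sole potential obstacle, is verifying that $\obp$ really enjoys the quotient-map behaviour and the kernel description used above; but this is precisely \cite[Proposition 2.5]{ryan} together with \cite[Proposition 3.10]{GJ2}, so no new estimate is required. I would also silently correct the notational slip in \Cref{ker-eqn-obp}, where $V_1, V_2$ should read $X_1, X_2$.
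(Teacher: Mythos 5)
Your proposal is correct and is essentially the paper's own argument: the paper establishes the quotient-map property of $\varphi_1 \obp \varphi_2$ via \cite[Proposition 2.5]{ryan} and the kernel description via \cite[Proposition 3.10]{GJ2}, and then simply observes that no specific property of $\|\cdot\|_{\wedge}$ was used in the proof of \Cref{ker-phi-ot-psi}, so the same two-inclusion argument carries over verbatim to $\|\cdot\|_{\gamma}$. Your explicit transcription of that argument, and your correction of the $V_i$ versus $X_i$ notational slip in the statement, match the paper's intent exactly.
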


In fact,  \Cref{ker-phi-obp-psi} follows readily from \cite[Proposition
    3.10]{GJ2}, as well.
  
  \begin{lemma}\label{center-A-oop-B}
Let $A$ and $B$ be $C^*$-algebras. Then, $\mcal{Z}(A) \ot\mcal{Z}( B)
= \mcal{Z}(A \ot B) \subseteq \mcal{Z}(A \oop B)$ and $\ol{\mcal{Z}(A)
  \ot\mcal{Z}( B)} = \mcal{Z}(A \oop B)$.
\end{lemma}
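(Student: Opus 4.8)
The plan is to prove the three assertions in turn: the algebraic identity $\mcal{Z}(A) \ot \mcal{Z}(B) = \mcal{Z}(A \ot B)$, the inclusion into the completion, and finally the closure statement, the last of which is where an external input is unavoidable because $\oop$ is not injective.

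\textbf{The algebraic center.} First I would establish $\mcal{Z}(A) \ot \mcal{Z}(B) = \mcal{Z}(A \ot B)$. The inclusion $\seq$ is immediate: a central elementary tensor $a \ot b$ with $a \in \mcal{Z}(A)$, $b \in \mcal{Z}(B)$ commutes with every $c \ot d$, hence, by bilinearity of the bracket, with all of $A \ot B$. For the reverse inclusion, take $x = \sum_{i=1}^n a_i \ot b_i \in \mcal{Z}(A \ot B)$ in a minimal-length representation, so that $\{a_i\}$ and $\{b_i\}$ are each linearly independent. Centrality gives $\sum_i a_i c \ot b_i d = \sum_i c a_i \ot d b_i$ for all $c \in A$ and $d \in B$. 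Choosing, by Hahn--Banach, functionals $\psi_j \in B^*$ with $\psi_j(b_i) = \delta_{ij}$, applying the slice map $\mathrm{id}_A \ot \psi_j$ to this identity, and then letting $d$ run through an approximate identity $\{f_\mu\}$ of $B$ (so that $b_i f_\mu \to b_i$ and $f_\mu b_i \to b_i$ in norm), I obtain in the limit $a_j c = c a_j$ for all $c \in A$, i.e.\ $a_j \in \mcal{Z}(A)$. The symmetric argument, using an approximate identity of $A$ and functionals in $A^*$ dual to the $a_i$, gives $b_j \in \mcal{Z}(B)$. Hence $x \in \mcal{Z}(A) \ot \mcal{Z}(B)$. (The approximate identities are what let us avoid assuming $A$ or $B$ unital.)

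\textbf{Passage to the completion.} Since $A \ot B$ is dense in $A \oop B$ and multiplication in the Banach algebra $A \oop B$ is jointly continuous, any $z \in \mcal{Z}(A \ot B)$ commutes with every element of the dense subalgebra $A \ot B$ and therefore with every $w \in A \oop B$; thus $\mcal{Z}(A \ot B) \seq \mcal{Z}(A \oop B)$. Moreover $\mcal{Z}(A \oop B) = \bigcap_{w} \ker(z \mapsto zw - wz)$ is an intersection of kernels of continuous maps, hence closed. Combined with the previous inclusion and the identity of the first step, this already yields $\ol{\mcal{Z}(A) \ot \mcal{Z}(B)} \seq \mcal{Z}(A \oop B)$.

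\textbf{The reverse closure inclusion.} It remains to prove $\mcal{Z}(A \oop B) \seq \ol{\mcal{Z}(A) \ot \mcal{Z}(B)}$, which is the crux. As $\mcal{Z}(A)$ and $\mcal{Z}(B)$ are $C^*$-subalgebras, their inclusions into $A$ and $B$ are complete isometries, so functoriality of $\oop$ furnishes a completely contractive, hence continuous, natural map $\iota : \mcal{Z}(A) \oop \mcal{Z}(B) \ra A \oop B$ whose restriction to the algebraic tensor product is the inclusion of $\mcal{Z}(A) \ot \mcal{Z}(B)$ into $A \oop B$. By \cite[Theorem 3]{jk08} this map identifies $\mcal{Z}(A) \oop \mcal{Z}(B)$ with the center, so that $\iota\big(\mcal{Z}(A) \oop \mcal{Z}(B)\big) = \mcal{Z}(A \oop B)$. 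Since $\mcal{Z}(A) \ot \mcal{Z}(B)$ is dense in $\mcal{Z}(A) \oop \mcal{Z}(B)$ and $\iota$ is continuous, I get
\[
\mcal{Z}(A \oop B) = \iota\big(\mcal{Z}(A) \oop \mcal{Z}(B)\big) \seq \ol{\iota\big(\mcal{Z}(A) \ot \mcal{Z}(B)\big)} = \ol{\mcal{Z}(A) \ot \mcal{Z}(B)},
\]
which together with the second step completes the proof. The main obstacle is exactly this inclusion: because $\oop$ fails to be injective one cannot identify $\ol{\mcal{Z}(A) \ot \mcal{Z}(B)}$ \emph{isometrically} with $\mcal{Z}(A) \oop \mcal{Z}(B)$, and there is in general no conditional expectation onto the center with which to approximate a central element directly; routing through the known algebraic isomorphism of \cite{jk08} together with mere continuity of the natural map $\iota$ is what makes the argument succeed.
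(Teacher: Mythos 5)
Your proof is correct and, at its crux, identical to the paper's: both arguments rest on \cite[Theorem 3]{jk08} together with density of $\mcal{Z}(A)\ot\mcal{Z}(B)$ in $\mcal{Z}(A)\oop\mcal{Z}(B)$ and continuity of the map back into $A\oop B$ --- your natural map $\iota$ coincides with the inverse $\theta^{-1}$ of the paper's bicontinuous isomorphism, since both are continuous extensions of the identity on the dense subspace $\mcal{Z}(A)\ot\mcal{Z}(B)$. The only real difference is cosmetic plus one bonus: you supply a self-contained slice-map/approximate-identity proof of the algebraic equality $\mcal{Z}(A\ot B)=\mcal{Z}(A)\ot\mcal{Z}(B)$, which the paper simply absorbs into its citation of \cite{jk08}.
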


\begin{proof}
Since $A \ot B$ is dense in $A \oop B$, we have $\mcal{Z}(A \ot B) \subseteq
\mcal{Z}(A \oop B)$.

By \cite[Theorem 3]{jk08}, the identity map on $\mcal{Z}(A \ot B) =
\mcal{Z}(A) \ot \mcal{Z}(B)$ extends to a bicontinuous algebraic
isomorphism, say, $\theta$ from $\mcal{Z}(A \oop B)$ onto $\mcal{Z}(A)
\oop \mcal{Z}(B)$. Let $y \in \mcal{Z}(A \oop B)$. Then, there
exists a sequence $\{y_n\}$ in $\mcal{Z}(A) \ot \mcal{Z}(B)=
\mcal{Z}(A \ot B)$ such that $\{y_n = \theta(y_n)\}$ converges to
$\theta(y)$ in $\mcal{Z}(A) \oop \mcal{Z}(B)$. In particular, $y_n
=\theta^{-1}(y_n) \ra y$ in $ \mcal{Z}(A \oop B)$. \end{proof}

\begin{remark}\label{center-A-obp-B}
For any two $C^*$-algebras $A$ and $B$, we have $\mcal{Z}(A) \obp
\mcal{Z}( B) = \mcal{Z}(A \obp B)$ canonically and isometrically
$*$-isomorphically - see \cite[Theorem 5.1]{GJ2}. 

Also, from \Cref{NI}, we see that $N(I)$ is a closed $*$-subalgebra of $A$
for any closed ideal $I$ in a $C^*$-algebra $A$. So, by \cite[Theorem
  2.6]{GJ2} (see \Cref{obp-injective} above), we can identify $N(I)
\obp N(J)$ with the closed $*$-subalgebra $\ol{N(I) \ot N(J)}$ of $A
\obp B$.
  \end{remark}

\begin{proposition}\label{NK-oop}
   Let $I$ and $J$ be closed ideals in $C^*$-algebras $A$ and $B$,
   respectively, and $\ota = \oop$ or $\obp$. Then, for $K = A \ota J
   + I \ota B,$ we have
   \begin{equation}\label{NK-eqn-oop}
     N(K) = \ol{K+ N(I) \ot N(J)}.
   \end{equation}
    \end{proposition}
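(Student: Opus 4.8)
The plan is to mimic the proof of \Cref{NK}, replacing the injectivity-based tools available for $\oh$ by the kernel computations of \Cref{ker-phi-ot-psi} and \Cref{ker-phi-obp-psi} together with the center formulas of \Cref{center-A-oop-B} and \Cref{center-A-obp-B}. Write $C = A \ota B$ and let $\pi_I : A \ra A/I$ and $\pi_J : B \ra B/J$ be the canonical quotient maps; these are complete quotient maps (in the case $\ota = \oop$) and quotient maps (in the case $\ota = \obp$). As recalled just before \Cref{ker-phi-ot-psi} (resp. \Cref{ker-phi-obp-psi}), the homomorphism $\pi_I \ota \pi_J : C \ra (A/I) \ota (B/J)$ has kernel exactly $K = \ol{A \ot J + I \ot B}$ and induces a bijective bicontinuous map $\beta := \ol{\pi_I \ota \pi_J} : C/K \ra (A/I) \ota (B/J)$.

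First I would record that, by \Cref{NI} applied to the closed ideal $K$ of $C$, we have $N(K) = \pi_K^{-1}(\mcal{Z}(C/K))$, where $\pi_K : C \ra C/K$ is the quotient map; thus the whole problem reduces to identifying $\mcal{Z}(C/K)$. Since $\pi_I \ota \pi_J$ is multiplicative, the induced bijection $\beta$ is an algebra isomorphism, and hence transports centers, giving $\mcal{Z}(C/K) = \beta^{-1}\big(\mcal{Z}((A/I) \ota (B/J))\big)$.

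Next I would compute the center on the right. By \Cref{center-A-oop-B} (for $\oop$) and \Cref{center-A-obp-B} (for $\obp$), in both cases
$$ \mcal{Z}\big((A/I) \ota (B/J)\big) = \ol{\mcal{Z}(A/I) \ot \mcal{Z}(B/J)}. $$
Now apply \Cref{ker-phi-ot-psi} (resp. \Cref{ker-phi-obp-psi}) with $\varphi_1 = \pi_I$, $\varphi_2 = \pi_J$, $E_1 = \mcal{Z}(A/I)$ and $E_2 = \mcal{Z}(B/J)$. Using $\pi_I^{-1}(0) = I$, $\pi_J^{-1}(0) = J$ and, crucially, $\pi_I^{-1}(\mcal{Z}(A/I)) = N(I)$, $\pi_J^{-1}(\mcal{Z}(B/J)) = N(J)$ (again \Cref{NI}), this yields
$$ \mcal{Z}(C/K) = \Big(\ol{I \ot B + A \ot J + N(I) \ot N(J)}\Big)/K = \Big(\ol{K + N(I) \ot N(J)}\Big)/K, $$
the last equality following from $K = \ol{A \ot J + I \ot B}$. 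Finally, applying $\pi_K^{-1}$ and using $K \seq \ol{K + N(I) \ot N(J)}$ gives $N(K) = \ol{K + N(I) \ot N(J)}$, as desired.

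The steps requiring the most care are, first, confirming that $\beta$ is genuinely an algebra isomorphism (and not merely a bicontinuous linear bijection), so that it carries $\mcal{Z}(C/K)$ onto $\mcal{Z}((A/I)\ota(B/J))$; this is immediate since $\pi_I \ota \pi_J$ is multiplicative and descends to the quotient. Second, one must verify the harmless-looking rearrangement of closures $\ol{I \ot B + A \ot J + N(I) \ot N(J)} = \ol{K + N(I) \ot N(J)}$. This is exactly the point at which the argument diverges from \Cref{NK}: there the analogue of \cite[Theorem 2.4]{sinc} guaranteed that $N(I) \oh N(J) + K$ is already closed, so no outer closure was needed. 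I expect the genuine obstacle here to be precisely the absence of any such clean sum-of-product-ideals theorem for $\oop$ and $\obp$, which forces us to retain the outer closure and prevents a sharper conclusion.
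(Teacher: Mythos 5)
Your proposal follows essentially the same route as the paper's proof: reduce to computing $\mcal{Z}\big((A\ota B)/K\big)$ via \Cref{NI}, transport the center through the induced algebra isomorphism $\ol{\pi_I\ota\pi_J}$, and evaluate the preimage using \Cref{ker-phi-ot-psi} and \Cref{ker-phi-obp-psi} together with \Cref{center-A-oop-B} and \Cref{center-A-obp-B}. The one point you leave implicit --- and then mistakenly doubt in your closing remark --- is that $K = A\ota J + I\ota B$ is genuinely closed, so that it really equals $\ker(\pi_I\ota\pi_J)=\ol{A\ot J + I\ot B}$; the paper settles exactly this by citing \cite[Lemma 2]{jk11} and \cite[Corollary 3.17]{GJ2}, and the outer closure in the final formula persists only because no analogue of \cite[Theorem 2.4]{sinc} is known for the sum $N(I)\ot N(J)+K$, just as you say.
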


\begin{proof}
 By \cite[Proposition 7.1.7]{ER} and \cite[Proposition 3.10]{GJ2},
 $\ker (\pi_I \ota \pi_J) = \ol{K} $ and, by \cite[Lemma 2]{jk11} and
 \cite[Corollary 3.17]{GJ2}, $K$ is closed. So, $\pi_I \ota \pi_J$
 induces a continuous algebra isomorphism $\ol{\pi_I \ota \pi_J}$ from
 $ (A \ota B)/K $ onto $(A/I) \ota (B/J)$.  Thus, by
 \Cref{center-A-oop-B} and \Cref{center-A-obp-B}, we obtain
  \begin{eqnarray*}
    \mcal{Z}\big((A \ota B)/K\big) & = & \Big( \ol{\pi_I \ota
      \pi_J}\Big)^{-1}\Big(\mcal{Z}\big( (A/I) \ota (B/J) \big) \Big)
    \\ & = & \Big( \ol{ \pi_I \ota \pi_J}\Big)^{ -1}
    \Big(\ol{\mcal{Z}(A/I) \ot \mcal{Z}(B/J)}\Big) \\ & = &
    \Big(\ol{N(I) \ot N(J) + K}\Big)/K, \hspace*{30mm}
    (\text{by\ Lemmas}\  \ref{ker-phi-ot-psi},\, \ref{ker-phi-obp-psi}\ \&\   \ref{NI})
  \end{eqnarray*} and, if $\pi_K: A \ota B \ra (A \ota B)/K$ is the natural quotient
map, then this yields
\[
N(K) =   \pi_K^{-1}\Big(\mcal{Z}\big((A \ota B)/K \big)\Big) = \ol{N(I) \ot N(J) + K},
\]
as was asserted.
\end{proof}

\begin{corollary}\label{NK-cor-1}
Let $A$ and $B$ be  $C^*$-algebras and $I$ and $J$ be closed
ideals of $A$ and $B$, respectively, such that $\mcal{Z} (A/I)$ or
$\mcal{Z} (B/J)$ is one dimensional. If $\ota = \oh, \obp$ or $\oop$,
then for the closed ideal $K = A \ota J + I \ota B$ of $A \ota B$, we
have
\begin{equation}\label{NK-eqn-2}
  N(K)= \left\{
  \begin{array}{ll}
  \ol{N(I) \ot \C 1 + K} & \text{if } \dim (\mcal{Z} (B/J)) =
  1\ \text{and}\ B\ \text{is unital},\ \text{ and}\\ \ol{\C 1 \ot N(J)
    + K} & \text{if } \dim (\mcal{Z} (A/I)) =
  1\ \text{and}\ A\ \text{is unital}.
\end{array}
  \right.
\end{equation}
Further, if $A$ and $B$ are both unital and, $\mcal{Z} (A/I)$ and
$\mcal{Z} (B/J)$ are both one dimensional, then $N(K) =\C ( 1\ot 1) +
K$.
\end{corollary}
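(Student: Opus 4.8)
The plan is to read off $N(K)$ from the two Lie-normalizer formulas already established---$N(K) = K + N(I) \oh N(J)$ from \Cref{NK} in the Haagerup case, and $N(K) = \ol{K + N(I) \ot N(J)}$ from \Cref{NK-oop} in the projective cases---and then to simplify the factor $N(I) \ot N(J)$ using the hypothesis on the relevant center. The common engine is \Cref{NI}: since $N(J) = \pi_J^{-1}(\mcal{Z}(B/J))$, a one-dimensional center pins $N(J)$ down completely as soon as $B/J$ is known to be unital.

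Concretely, suppose $\dim(\mcal{Z}(B/J)) = 1$ and $B$ is unital. Then $B/J$ is unital with unit $1 + J$, so $\mcal{Z}(B/J) = \C(1 + J)$ and hence $N(J) = \pi_J^{-1}(\C(1 + J)) = \C 1 + J$ by \Cref{NI}. Substituting this, I would split $N(I) \ot N(J) = N(I) \ot \C 1 + N(I) \ot J$ and observe that the second summand lies in $A \ot J \seq K$; thus $N(I) \ot N(J) + K = N(I) \ot \C 1 + K$. Feeding this back into the formula for $N(K)$ gives $N(K) = \ol{N(I) \ot \C 1 + K}$ in the projective cases directly, and in the Haagerup case the same identity follows from two inclusions: $1 \in N(J)$ yields $N(I) \ot \C 1 \seq N(I) \oh N(J) \seq N(K)$, while $N(I) \ot J \seq K$ together with the closedness of $N(K)$ gives the reverse containment. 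The case $\dim(\mcal{Z}(A/I)) = 1$ with $A$ unital is entirely symmetric, yielding $\C 1 \ot N(J)$ in place of $N(I) \ot \C 1$.

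For the final assertion both factors become explicit: $N(I) = \C 1 + I$ and $N(J) = \C 1 + J$, so $N(I) \ot N(J) = \C(1 \ot 1) + I \ot \C 1 + \C 1 \ot J + I \ot J$, and the last three summands all lie in $K$. Hence $N(I) \ot N(J) + K = \C(1 \ot 1) + K$, which---being a one-dimensional perturbation of the closed ideal $K$---is already closed, so no closure operation survives and $N(K) = \C(1 \ot 1) + K$ for all three tensor products at once. The only point demanding genuine care is this bookkeeping of closures: \Cref{NK} presents $N(K)$ as an algebraic sum that happens to be closed while \Cref{NK-oop} presents it as a closure, so to record one uniform conclusion I must verify the pair of inclusions rather than merely rewrite the formula; everything else is routine substitution.
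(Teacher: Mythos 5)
Your proof is correct and is exactly the derivation the paper intends: the corollary is stated there without a separate proof, as an immediate consequence of \Cref{NK}, \Cref{NK-oop} and \Cref{NI}, which is precisely what you carry out (substituting $N(J)=\C 1+J$, absorbing $N(I)\ot J$ into $K$, and noting that $\C(1\ot 1)+K$, being a finite-dimensional perturbation of a closed subspace, is closed). One harmless slip: in the Haagerup case the reverse inclusion uses that $N(I)\oh N(J)=\ol{N(I)\ot N(J)}$ (by injectivity of $\oh$) lies in the closed subspace $\ol{N(I)\ot \C 1+K}$, rather than the closedness of $N(K)$, which is what you need for the forward inclusion.
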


\begin{corollary}\label{NK-unique}
  Let $A$ and $B$ be
  unital $C^*$-algebras and $\ota = \oh, \obp $ or $\oop$.
  \begin{enumerate}
  \item If $I$ and $J$ are non-trivial maximal closed ideals of $A$
    and $B$, respectively, and $K = A \ota J + I \ota B$, then
\begin{equation}\label{NK-eqn-3}
  N(K) = \C (1\ot 1) + K.
\end{equation}
\item In fact, if  $I$ and $J$ are the unique
non-trivial proper closed ideals of $A$ and $B$, respectively, then
(\ref{NK-eqn-3}) holds for every closed ideal $K$ in $A \ota B$.
\end{enumerate}
\end{corollary}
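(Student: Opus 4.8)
The plan is to reduce both parts to the center computations already encapsulated in \Cref{NK-cor-1} (and in the proofs of \Cref{NK} and \Cref{NK-oop}), after first pinning down the relevant ideal lattices. Part (1) is then almost immediate: a non-trivial maximal closed ideal $I$ makes $A/I$ a simple unital $C^*$-algebra, so $\mcal{Z}(A/I) = \C(1 + I)$ is one dimensional, and likewise $\dim \mcal{Z}(B/J) = 1$. Since $A$ and $B$ are unital, the final assertion of \Cref{NK-cor-1} applies verbatim and yields $N(K) = \C(1 \ot 1) + K$.

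For part (2) I would first record two consequences of the hypothesis. On one hand, $\mcal{Z}(A) = \C 1 = \mcal{Z}(B)$: a non-trivial center would, through its Gelfand spectrum, produce either a non-trivial central projection (hence a direct-summand closed ideal) or, by Urysohn separation of two spectral points, a family of distinct proper closed ideals, in either case contradicting the uniqueness of $I$ (resp. $J$). On the other hand, since the closed ideals of $A$ and of $B$ form the chains $(0) \subset I \subset A$ and $(0) \subset J \subset B$, the ideal-structure theory for these tensor products shows that every closed ideal of $A \ota B$ is a sum of product ideals $\sum_k I_k \ota J_k$; a short bookkeeping then leaves exactly the proper ideals $(0)$, $I \ota B$, $A \ota J$, $I \ota J$ and $K_{\max} := A \ota J + I \ota B$, so it remains to evaluate $N$ on each.

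Four of these are handled by the tools already in hand. For $K = (0)$ the center formulas (\cite[Theorem 2.13]{sinc}, \Cref{center-A-oop-B} and \Cref{center-A-obp-B}) together with $\mcal{Z}(A) = \mcal{Z}(B) = \C 1$ give $N((0)) = \mcal{Z}(A \ota B) = \C(1 \ot 1)$. For $K = I \ota B$ and $K = A \ota J$, setting one ideal equal to $(0)$ in \Cref{NK} / \Cref{NK-oop} gives $N(I \ota B) = \ol{I \ota B + N(I) \ot \mcal{Z}(B)}$ and its symmetric counterpart; feeding in $N(I) = \C 1 + I$ (\Cref{NI-banach}) and $\mcal{Z}(B) = \C 1$ collapses each to $\C(1 \ot 1) + K$. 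For $K = K_{\max}$, both quotient centers are one dimensional and both factors are unital, so \Cref{NK-cor-1} gives $N(K_{\max}) = \C(1 \ot 1) + K_{\max}$.

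The remaining case $K = I \ota J$ is the genuinely new point and the main obstacle, since $I \ota J$ is not covered by \Cref{NK-cor-1}. Here I would use $I \ota J = (I \ota B) \cap (A \ota J)$ (the intersection formula for product subspaces, \cite[Corollary 4.6]{smi} and its $\oop, \obp$ analogues) to embed $(A \ota B)/(I \ota J)$, via the two quotient maps, as a subdirect product inside $(A/I) \ota B \,\oplus\, A \ota (B/J)$, with both coordinate projections onto. Consequently an element of the quotient is central if and only if each of its two coordinates is central, so its center lies in $\mcal{Z}((A/I)\ota B) \oplus \mcal{Z}(A \ota (B/J)) = \C(1\ot1) \oplus \C(1\ot1)$, each summand being one dimensional because $A/I$ and $B/J$ are simple unital while $\mcal{Z}(A) = \mcal{Z}(B) = \C 1$. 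Finally, an off-diagonal element $(\alpha(1\ot1), \beta(1\ot1))$ lies in the image only if $(\beta - \alpha)(1 \ot 1) \in K_{\max}$, which forces $\alpha = \beta$ since $1 \ot 1 \notin K_{\max}$; hence the quotient has center exactly $\C\,\ol{1 \ot 1}$, and by \Cref{NI} we get $N(I \ota J) = \C(1 \ot 1) + I \ota J$. Together with part (1), this proves the statement.
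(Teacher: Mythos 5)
Your proposal is correct and is, in skeleton, the paper's own proof: part (1) is exactly the paper's application of \Cref{NK-cor-1}, and part (2) proceeds, as in the paper, by invoking the classification of the six closed ideals of $A \ota B$ and evaluating the Lie normalizer of each one. The only visible divergence is the case $K = I \ota J$: the paper uses monotonicity of the Lie normalizer, $N(I\ota J)\subseteq N(A\ota J)\cap N(I\ota B)$, and then shows directly that $\big(\C(1\ot 1)+A\ota J\big)\cap\big(\C(1\ot 1)+I\ota B\big)=\C(1\ot 1)+I\ota J$, whereas you embed $(A\ota B)/(I\ota J)$ subdirectly into $\big((A/I)\ota B\big)\oplus\big(A\ota (B/J)\big)$ and compute its center, finishing with \Cref{NI}. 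These are the same computation in different packaging: both rest on the two facts that $(A\ota J)\cap(I\ota B)=I\ota J$ and that $1\ot 1$ lies outside the proper ideal $A\ota J+I\ota B$, which forces your $\alpha=\beta$ exactly as it forces the paper's $\lambda_1=\lambda_2$. (For $\oh$ the intersection fact is Smith's result, as you cite; for $\oop$ and $\obp$ there is no off-the-shelf ``analogue'' in the literature, but it does follow from the six-ideal classification you already invoked, since the intersection is a closed ideal squeezed between $I\ota J$ and both $A\ota J$ and $I\ota B$ --- a remark that also patches the paper's own implicit use of the same fact.) Beyond this, you make explicit two points the paper leaves tacit in this proof: that $\mcal{Z}(A)=\C 1=\mcal{Z}(B)$ (the paper invokes \cite[Lemma 2.1]{ART} only later, in \Cref{A-ota-B}, though it is already needed here for $N(A\ota J)$, $N(I\ota B)$ and $N((0))$), and the case $K=(0)$ itself, which is required since part (2) claims the formula for \emph{every} closed ideal; both additions are correct and tighten rather than change the argument.
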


\begin{proof}
  Since $I$ is maximal, $N(I) = \C1 + I$, by \Cref{NI}, and likewise,
  $N(J) =\C1 +J$. So, in view of \Cref{NK-cor-1}, we just need to
  prove the second part.

  If $I$ and $J$ are the unique non-trivial proper closed ideals of
  $A$ and $B$, respectively , then it is known that the only closed
  ideals of $A \ota B$ are $(0), I \ota J, A \ota J, I \ota B, A\ota J
  + I \ota B $ and $ A \ota B$ - see \cite[Theorem 5.3]{sinc},
  \cite[Theorem 3.16]{GJ2} and \cite[Theorem 3.4]{kr}. Since $A/I$ and
  $B/J$ are simple and unital, we have $\dim \mcal{Z}(A/I) = \dim
  \mcal{Z}(B/J) =1$. So, by \Cref{NK-cor-1} again, we obtain the
  desired expression for the Lie normalizers of the closed ideals $A
  \ota J$ and $I \ota B$.

Finally, for $I \ota J$, since $I\ota J$ can be identified with the
product ideal $\ol{I \ot J}$ of $A \ota J$ as well as of $I \ota B$
(see Remarks \ref{I-oop-J} and \ref{obp-injective}), we obtain
\[
N(I \ota J) \subseteq N(A \ota J)
\cap N(I \ota B) = \big( \C (1\ot 1) + A \ota J \big) \cap \big(
\C (1 \ot 1) + I \ota B\big).
\]
We claim that the last intersection equals $\C(1\ot 1) + I \ota J$
. Indeed, if $ \lambda_1{(1\ot 1)} + z = \lambda_2{(1\ot 1)} + w$ for
$z \in A \ota J$ and $w \in I \ota B$ then $z - w = (\lambda_2
-\lambda_1) (1\ot 1)$. Since $z - w$ is in the unique maximal ideal $A
\ota J + I \ota J$ of $A \ota B$, it can not be a non zero
scalar. Thus, $z = w$.
\end{proof}

A Lie ideal $L$ in an algebra $A$ is said to be central if $L
\subseteq \mcal{Z}(A)$. For $\ota = \oh$ and $\oop$, in \cite{GJ1}, it
was shown that the only non-zero central Lie ideal of
$B(H)\otimes^{\alpha} B(H)$ is $\C ( 1 \ot 1)$ and that every
non-central closed Lie ideal in $B(H)\otimes^{\alpha} B(H)$ contains
the product ideal $K(H) \ot^{\alpha} K(H)$. We can now identify all
closed Lie ideals of $B(H)\otimes^{\alpha} B(H)$ (including for $\ota =
\obp$), as a consequence of the following:
 
\begin{theorem}\label{A-ota-B}
Let $A$ and $B$ be unital $C^*$-algebras and suppose one of them
admits no tracial states. Let $\ota = \oh, \obp$ or $\oop$.  If each
of $A$ and $B$ contains a unique proper non trivial closed ideal, then
$A \ota B$ has precisely $11$ distinct closed Lie ideals, each of
which is either a closed ideal or a subspace of the form $\C(1\ot 1) +
K$ for some closed ideal $K$.
\end{theorem}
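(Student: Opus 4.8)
The plan is to combine the sandwich characterisation of closed Lie ideals with the complete list of closed ideals of $C := A \ota B$ and the computation of their Lie normalizers already in hand. Write $I$ and $J$ for the unique proper non-trivial closed ideals of $A$ and $B$, respectively. As recalled in the proof of \Cref{NK-unique}, the only closed ideals of $C$ are the six spaces $(0),\ I \ota J,\ A \ota J,\ I \ota B,\ A \ota J + I \ota B$ and $C$; and by the second part of \Cref{NK-unique} each closed ideal $K$ of $C$ satisfies $N(K) = \C(1 \ot 1) + K$. Since the sandwich theorem \cite[Theorem 4.5]{GJ1} applies to $C$, a closed subspace $L$ of $C$ is a Lie ideal if and only if $\ol{[K, C]} \seq L \seq N(\ol{[K, C]})$ for some closed ideal $K$ of $C$.

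The pivotal point is to show that $\ol{[K, C]} = K$ for each of these six ideals; granting this, and using $N(K) = \C(1\ot 1) + K$, the sandwich condition becomes $K \seq L \seq \C(1 \ot 1) + K$. Because $\ol{[M, M]} \seq \ol{[M, C]} \seq M$ for any closed ideal $M$, it suffices to prove $\ol{[M, M]} = M$ for each of $I \ota J,\ A \ota J,\ I \ota B$ and their sum, the cases $M = (0)$ and $M = C$ being immediate (the latter from \Cref{commutators-c-star}). Via injectivity of $\oh$ together with \Cref{I-oop-J} and \Cref{obp-injective}, each of $I \ota J,\ A \ota J,\ I \ota B$ is identified with the tensor product of the displayed closed ideal and another $C^*$-algebra, so \Cref{commutators-c-star} gives $\ol{[M, M]} = M$ as soon as one of its tensor factors admits no tracial state; for the sum one uses $\ol{[A \ota J + I \ota B, A \ota J + I \ota B]} \supseteq \ol{[A \ota J, A \ota J]} + \ol{[I \ota B, I \ota B]} = A \ota J + I \ota B$.

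The hard part is therefore a trace-theoretic lemma, which I would isolate and prove first: \emph{if a $C^*$-algebra admits no tracial state, then neither does any of its closed ideals}. Indeed, say $B$ has no tracial state (the other case being entirely analogous, with $A$ and $I$ replacing $B$ and $J$), and suppose for contradiction that $J$ carried a tracial state $\tau$. In the enveloping von Neumann algebra, $J^{**}$ is a weak$^*$-closed ideal of $B^{**}$, hence $J^{**} = z B^{**}$ for a central projection $z$, and $\tau$ extends to a normal tracial state $\ol\tau$ on $J^{**}$ with $\ol\tau(z) = 1$. Then $\omega(b) := \ol\tau(z b)$ is a tracial state on $B$: normalisation is $\omega(1) = \ol\tau(z) = 1$, positivity follows from $z b^* b = (z b)^*(z b)$, and centrality of $z$ gives $z a b = (z a)(z b)$, so traciality of $\ol\tau$ on $J^{**}$ yields $\omega(ab) = \omega(ba)$ --- contradicting the hypothesis on $B$. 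Hence $J$ has no tracial state, and \Cref{commutators-c-star} applied with the factor $J$ (respectively $B$ itself) then forces $\ol{[M, M]} = M$ for $A \ota J$, $I \ota J$ and $I \ota B$ simultaneously, completing the previous paragraph.

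With $\ol{[K, C]} = K$ in hand for all six closed ideals, the sandwich condition reads $K \seq L \seq \C(1 \ot 1) + K$. By \Cref{NI} the quotient $N(K)/K = \mcal{Z}(C/K) = \C(1 \ot 1 + K)$ is at most one-dimensional, so for each $K$ the only closed subspaces $L$ caught between $K$ and $\C(1 \ot 1) + K$ are $L = K$ and $L = \C(1 \ot 1) + K$, these coinciding exactly when $K = C$. Running over the six ideals thus yields the six ideals together with the five subspaces $\C(1 \ot 1) + K$ with $K \neq C$. Finally I would verify that these eleven spaces are pairwise distinct: the six ideals are distinct, each $\C(1 \ot 1) + K$ contains $1 \ot 1$ yet is proper, while the only closed ideal containing $1 \ot 1$ is $C$, and $K \mapsto \C(1 \ot 1) + K$ is injective on proper ideals. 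Hence $C$ has precisely $11$ distinct closed Lie ideals, each a closed ideal or of the form $\C(1 \ot 1) + K$, as claimed.
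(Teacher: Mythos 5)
Your proof has the same skeleton as the paper's: the sandwich theorem of \cite[Theorem 4.5]{GJ1}, the list of six closed ideals of $C=A\ota B$, the identity $N(K)=\C(1\ot 1)+K$ from \Cref{NK-unique}, and a count. The genuine difference is how the reduction $\ol{[K,C]}=K$ is obtained. The paper never computes commutators of the individual ideals: it checks that $C$ admits no tracial functionals (\Cref{commutators-c-star}) and that every closed ideal of $C$ admits a quasi-central approximate identity (\cite[Remark 4.14]{GJ1}), and the machinery of \cite{GJ1} (Theorem 4.5 together with Lemma 4.2 there) then delivers $K\subseteq L\subseteq N(K)$ in one stroke, for all closed ideals simultaneously. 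You instead prove $\ol{[K,C]}=K$ ideal by ideal, resting on a trace-theoretic lemma --- a closed ideal of a $C^*$-algebra with no tracial states itself has no tracial states --- whose bidual proof (central support projection $z$, normal tracial extension $\ol\tau$, then $\omega(b)=\ol\tau(zb)$) is correct, followed by \Cref{commutators-c-star} applied factorwise through the identifications of \Cref{I-oop-J}, \Cref{obp-injective} and injectivity of $\oh$. This is a legitimate, more self-contained and $C^*$-specific route for that step, and your explicit distinctness check at the end is something the paper simply omits (your unproved assertions there --- properness of $\C(1\ot1)+K$ and injectivity of $K\mapsto\C(1\ot1)+K$ on proper ideals --- each follow in one line from $\ol{[C,C]}=C$ and from the fact that every proper closed ideal sits inside the maximal one).

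The gap is at the entry point: ``since the sandwich theorem \cite[Theorem 4.5]{GJ1} applies to $C$'' is asserted, not verified. That theorem is stated for \emph{suitable} Banach algebras, and, as the paper's own proof makes explicit, the suitability hypotheses are precisely (i) absence of tracial functionals on $C$ and (ii) existence of quasi-central approximate identities in every closed ideal of $C$. You establish (i) in substance, but (ii) never appears in your proposal, and your hand-proved identities $\ol{[K,C]}=K$ cannot stand in for it: they reproduce a \emph{consequence} of (ii) (namely what \cite[Lemma 4.2]{GJ1} yields), whereas the sandwich theorem needs (ii) as an input before it may be quoted at all. The fix is a single citation, \cite[Remark 4.14]{GJ1}, exactly as in the paper; note, however, that once that citation is in place, \cite[Lemma 4.2]{GJ1} already gives $\ol{[K,C]}=K$ for every closed ideal, so your trace-theoretic lemma, while correct and of independent interest, becomes redundant.
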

\begin{proof}
 Let $C: = A \ot^{\alpha} B$ and $L$ be a closed Lie ideal in
 $C$. Since $\| \cdot\|_h$, $\|\cdot\|_{\gamma}$ and $\| \cdot
 \|_{\wedge}$ are all algebra norms, $A \otimes^{\alpha} B$ does not
 admit any  tracial functional - see
 \Cref{commutators-c-star}. And, by \cite[Remark 4.14]{GJ1}, every closed
 ideal in $ A \otimes^{\alpha} B $ admits a quasi-central approximate
 identity.  Therefore, by \cite[Theorem 4.5]{GJ1}, there exists a
 closed ideal $K$ in $A\ota B$ such that $K \subseteq L \subseteq
 N(K)$.

 By \Cref{NK-unique}, we have $N(K) = \C (1 \ot 1) + K $ for $K \neq
 (0)$ and, by \Cref{NI}, we have
 \[
 N((0)) = \mcal{Z}(A \ota B) \cong \mcal{Z}(A) \ota \mcal{Z}(B),
 \]
 where, as seen above, the relationship between centers comes from
 \cite[Theorem 2.13]{sinc}, \Cref{center-A-oop-B} and
 \Cref{center-A-obp-B}. Then, since $A$ and $B$ contain unique
 non-trivial proper closed ideals and are unital, we have $\dim \mcal{Z}(A) = 1=  \dim
 \mcal{Z}(B)$ - see  \cite[Lemma 2.1]{ART}. Hence,  $N((0)) = \C(1 \ot 1).$

As recalled in \Cref{NK-unique}, $A \ota B$ has only 6 closed ideals,
namely, $(0), I \ota J, A \ota J, I \ota B, A\ota J + I \ota B $ and $
A \ota B$. Thus, there are only 11 closed Lie ideals in $A \ota B$ of
the form $K$ or $\C(1 \ot 1) +K$ for some closed ideal $K$.
\end{proof}

\begin{corollary}\label{bh-ota-bh}
Let $H$ be an infinite dimensional separable Hilbert space and
$\ot^{\alpha} =\oh, \obp$ or $\oop$. Then, the Banach algebra $B(H) \ota B(H)$
contains only $11$ distinct closed Lie ideals, each of which is either
a closed ideal or is a subspace of the form $\C(1 \ot 1) + K$ for some
closed ideal $K$ in $B(H) \ota B(H)$.
\end{corollary}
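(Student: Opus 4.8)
The plan is to obtain this as an immediate application of \Cref{A-ota-B}, taking $A = B = B(H)$, so the real content is simply to verify that this choice meets the three hypotheses of that theorem. First, $B(H)$ is visibly a unital $C^*$-algebra, its unit being the identity operator. Second, I would record that $B(H)$ admits no tracial states: since $H$ is infinite dimensional, $B(H)$ is spanned by its commutators (see \cite[page 198]{hal}), so $\ol{[B(H), B(H)]} = B(H)$, and by \Cref{tf-ts} this forces $\mcal{T}(B(H)) = \emptyset$. In particular (at least) one of the two factors admits no tracial states. Third, I would invoke the classical fact that, for separable infinite dimensional $H$, the algebra $K(H)$ of compact operators is the unique proper non-trivial closed two-sided ideal of $B(H)$.

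With these three points in place, \Cref{A-ota-B} applies verbatim and yields that $B(H) \ota B(H)$ has precisely $11$ distinct closed Lie ideals, each of which is either a closed ideal or a subspace of the form $\C(1 \ot 1) + K$ for some closed ideal $K$, as claimed.

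For concreteness one can spell out the count. Taking $I = J = K(H)$, the six closed ideals of $B(H) \ota B(H)$ are $(0)$, $K(H) \ota K(H)$, $B(H) \ota K(H)$, $K(H) \ota B(H)$, $B(H) \ota K(H) + K(H) \ota B(H)$, and $B(H) \ota B(H)$. By the structure recorded in the proof of \Cref{A-ota-B} one has $N((0)) = \C(1 \ot 1)$ and $N(K) = \C(1 \ot 1) + K$ for every non-zero ideal $K$; hence the five proper closed ideals each give rise to a genuinely new Lie ideal $\C(1 \ot 1) + K$ (the choice $K = B(H) \ota B(H)$ producing nothing new), so $6 + 5 = 11$ in total. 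I do not anticipate any genuine obstacle here, since all the substantive work has already been absorbed into \Cref{A-ota-B}; the only care required is in correctly invoking the two classical facts about $B(H)$ — the absence of tracial states and the uniqueness of $K(H)$ as a closed ideal — which is exactly what allows the hypotheses of the theorem to be met.
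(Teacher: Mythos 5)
Your proposal is correct and takes essentially the same route as the paper, which states this corollary as an immediate consequence of \Cref{A-ota-B} applied with $A = B = B(H)$: unitality, the absence of tracial states (via \Cref{2A} and \Cref{tf-ts}), and the uniqueness of $K(H)$ as a proper non-trivial closed ideal are exactly the hypotheses to be checked. Your explicit enumeration of the six closed ideals and the five additional Lie ideals of the form $\C(1\ot 1)+K$ is also accurate and matches the count in the proof of \Cref{A-ota-B}.
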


\noindent{\bf Examples of $C^*$-algebras with unique ideals.}
  Apart from $B(H)$ for an infinite dimensional  separable Hilbert space
  $H$, there are many other examples of $C^*$-algebras containing
  unique non-trivial closed ideals. We list few of them here: 
  \begin{itemize} \item For any non-unital
  simple $C^*$-algebra $A$, its unitization $\widetilde{A}$ has a
  unique closed ideal, namely, $A$.
  \item For a non-unital simple $C^*$-algebra $A$ belonging to a
    fairly large family, the corona $C^*$-algebra $M(A)/A$ is simple
    and unital, where $M(A)$ is the multiplier algebra of $A$ - see
    \cite{lin} and the references therein. In particular, for every
    such $A$, the only non-trivial proper closed ideal in $M(A)$ is
    $A$.
  \item There are many AF $C^*$-algebras with unique non-trivial
    closed ideals. For instance, by \cite[Theorem 3.3]{bra} (also see
    \cite[Theorem III.4.2]{dav}), the AF algebra with Bratteli diagram
    \[\hspace*{10mm}\xymatrix{ 1 \ar@{=}[r] \ar@{-}[rd] &
      2\ar@{=}[r] \ar@{-}[rd] & 4 \ar@{-}[rd] \ar@{=}[r] & & \cdots &
      \ar@{=}[r] \ar@{-}[rd] & 2^n \ar@{=}[r] \ar@{-}[rd] & & \cdots
      \\ 1 \ar@{-}[r]& 2\ar@{-}[r] & 4 \ar@{-}[r] & & \cdots &
      \ar@{-}[r] & 2^n \ar@{-}[r] & & \cdots }\] possesses a unique
    non-trivial proper closed ideal determined by the unique directed
    hereditary subgraph (the base line graph)
   \[ \hspace*{10mm}\xymatrix{  1 \ar@{-}[r]& 2\ar@{-}[r] &
      4 \ar@{-}[r] & & \cdots & \ar@{-}[r] & 2^n \ar@{-}[r] & &
      \cdots  .}\]
  \end{itemize}
The first named author would like to thank Caleb Eckhardt for pointing out
the AF algebra  example in the above list.

In \cite[Theorem 4.16]{GJ1}, it was shown that if $A$ and $B$ are
simple, unital $C^*$-algebras with one of them admitting no tracial
functionals, then the only proper non-trivial closed Lie ideal of $A
\oh B$ or $A \oop B$ is $\C (1 \ot 1)$. Using above form of Lie
normalizers of ideals, we can now generalize this result to
the following:\\

\begin{theorem}\label{thm1}
  Let $A$ be a simple $C^*$-algebra and $B$ be any $C^*$-algebra with
  one of them admitting no tracial states. If $\ota =
  \oh, \obp$ or $\oop$, then a subspace $L$ of $A \ota B$ is a closed Lie
  ideal if and only if it is of the form $$L = \left\{
  \begin{array}{ll}
    \overline{A \ot J + \C 1 \ot S}, & \ \text{if}\ A \ \text{is
      unital, and} \\ A \ota J, &
    \ \text{if}\ A \ \text{is non-unital},
  \end{array} \right. $$
  for some closed ideal $J$ in $B$ and some  subspace $S$ of  $N(J)$.
\end{theorem}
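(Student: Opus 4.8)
The plan is to run the usual sandwich description of closed Lie ideals and then read off the shape of $L$ from the Lie-normalizer formulas already established in \Cref{NK} and \Cref{NK-oop}.

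\smallskip

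\emph{Sufficiency.} I would first verify that the listed subspaces are closed Lie ideals. The non-unital case is immediate, since $A \ota J$ is a closed ideal and hence a Lie ideal. For unital $A$ with $S \seq N(J)$, note that for $a, a' \in A$, $j \in J$, $s \in S$ and $b \in B$ one has $[a \ot j, a' \ot b] \in A \ot J$, while $[1 \ot s, a \ot b] = a \ot [s,b] \in A \ot J$ because $s \in N(J)$ forces $[s,b] \in J$. Thus $[A \ot J + \C 1 \ot S,\, A \ot B] \seq A \ot J$, and passing to closures (the bracket is jointly continuous) gives $[L, A \ota B] \seq A \ota J \seq L$, so $L = \ol{A \ot J + \C 1 \ot S}$ is a closed Lie ideal.

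\smallskip

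\emph{Necessity.} Let $L$ be a closed Lie ideal of $C := A \ota B$. Since one of $A, B$ has no tracial states, $C$ has no tracial functionals by \Cref{commutators-c-star}, and its closed ideals carry quasi-central approximate identities (\cite[Remark 4.14]{GJ1}); hence \cite[Theorem 4.5]{GJ1} applies and yields a closed ideal $I$ of $C$ with $\ol{[I,C]} \seq L \seq N(\ol{[I,C]})$. Because $A$ is simple, the closed-ideal structure of $A \ota B$ (see \cite[Theorem 5.3]{sinc}, \cite[Theorem 3.16]{GJ2} and \cite[Theorem 3.4]{kr}) forces $I = A \ota J$ for some closed ideal $J$ of $B$.

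\smallskip

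The crux is to show $\ol{[I,C]} = A \ota J$; as $A \ota J$ is an ideal, only the inclusion $A \ota J \seq \ol{[I,C]}$ needs work. If $A$ has no tracial states then $\ol{[A,A]} = A$ by \Cref{tf-ts}; approximating $a \in A$ by sums of commutators $[x,y]$ and using an approximate identity $\{e_\lambda\}$ of $B$, the convergence $[x \ot j,\, y \ot e_\lambda] \to [x,y] \ot j$ (a cross-norm estimate) puts $a \ot j$ in $\ol{[I,C]}$, giving $A \ota J \seq \ol{[I,C]}$. If instead $B$ has no tracial states, the needed fact is $\ol{[J,B]} = J$; granting it, the identity $a \ot [b,b'] = [a^{1/2} \ot b,\, a^{1/2} \ot b']$ with $b \in J$, $b' \in B$ and $a \geq 0$ yields $\ol{A \ot [J,B]} = A \ota J \seq \ol{[I,C]}$. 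I expect $\ol{[J,B]} = J$ to be the main obstacle: a bounded functional annihilating $[J,B]$ is a $B$-central trace on $J$, which one must upgrade, through a quasi-central approximate identity of $J$ in $B$, into a genuine tracial state on $B$, contradicting the hypothesis.

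\smallskip

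With $\ol{[I,C]} = A \ota J$ secured, the normalizer formulas close the argument. Applying \Cref{NK} (for $\oh$) and \Cref{NK-oop} (for $\obp, \oop$) with the first-factor ideal equal to $(0)$ gives $N(A \ota J) = \ol{A \ot J + \mcal{Z}(A) \ot N(J)}$. If $A$ is non-unital simple then $\mcal{Z}(A) = (0)$, so $A \ota J \seq L \seq A \ota J$ forces $L = A \ota J$. If $A$ is unital simple then $\mcal{Z}(A) = \C 1$, so $A \ota J \seq L \seq \ol{A \ot J + \C 1 \ot N(J)}$. Quotienting by the ideal $A \ota J$, whose quotient is $A \ota (B/J)$ and under which $N(J)$ maps onto $\mcal{Z}(B/J) = \pi_J(N(J))$ by \Cref{NI}, the image of $L$ is a closed subspace of $\C 1 \ota \mcal{Z}(B/J) \cong \mcal{Z}(B/J)$, hence of the form $\C 1 \ot S'$ for a closed subspace $S' \seq \mcal{Z}(B/J)$. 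Setting $S := \pi_J^{-1}(S') \seq N(J)$ and pulling back (using $A \ota J = \ker \seq L$) yields $L = \ol{A \ot J + \C 1 \ot S}$, as desired.
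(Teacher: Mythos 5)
Your overall architecture is the paper's: sandwich the closed Lie ideal between an ideal-type lower bound and a Lie normalizer, identify every closed ideal of $C:=A\ota B$ as $A\ota J$, apply the normalizer formulas of \Cref{NK} and \Cref{NK-oop}, and use simplicity to reduce $\mcal{Z}(A)$ to $\C 1$ or $(0)$. Your sufficiency check and your final quotient argument are correct (the latter is in fact more detailed than the paper, which compresses it into ``Hence, $L$ must be of the form as in the statement''). The genuine gap is in how you secure the lower bound of the sandwich. The paper does not prove $\ol{[I,C]}=I$ by hand: having shown that $C$ admits no tracial functionals (\Cref{commutators-c-star}) and that every closed ideal $A\ota J$ of $C$ carries a quasi-central approximate identity, it invokes \cite[Corollary 4.7 \& Lemma 4.2]{GJ1}, which deliver a closed ideal $K$ with $K\seq L\seq N(K)$ outright. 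You instead descend to the factors, and in the case where $B$ (but not necessarily $A$) has no tracial states, your argument rests on the claim $\ol{[J,B]}=J$, which you yourself flag as ``the main obstacle'' and support only with a one-line strategy. That claim is true but nontrivial: from a bounded functional on $J$ annihilating $[J,B]$ one gets a tracial functional on $J$, hence a tracial state on $J$ by \Cref{tf-ts} applied to $J$, and one must then invoke the fact that a tracial state on a closed ideal extends to a tracial state on the ambient $C^*$-algebra --- an extension theorem (itself a quasi-centrality argument) that you neither prove nor cite. Without it, this case of the necessity direction does not give $A\ota J\seq L$, and your quotient endgame (which needs $L$ to contain the kernel $A\ota J$) cannot start.

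The detour is also avoidable inside your own framework. Since $\mcal{T}(B)=\emptyset$ already gives $\ol{[C,C]}=C$ by \Cref{commutators-c-star}, take $x\in I$, a quasi-central (in $C$) approximate identity $\{e_\lambda\}$ of $I$ with bound $M$, and a finite sum of commutators $c=\sum_i[u_i,v_i]\in[C,C]$ with $\|x-c\|<\epsilon$; the identity $e_\lambda[u,v]=[e_\lambda u,v]+[v,e_\lambda]u$ shows that $e_\lambda c$ lies within $\sum_i\|[v_i,e_\lambda]\|\,\|u_i\|$ of $[I,C]$, while $\|x-e_\lambda c\|\leq\|x-e_\lambda x\|+M\epsilon$, so letting $\lambda\to\infty$ gives $x\in\ol{[I,C]}$. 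This yields $I\seq\ol{[I,C]}$ for any closed ideal with a quasi-central approximate identity in a Banach algebra spanned densely by its commutators --- which is essentially the content of the cited \cite[Lemma 4.2]{GJ1}, and is exactly the black box the paper's proof leans on instead of any statement about $\ol{[J,B]}$.
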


\begin{proof}
Suppose $L$ is a closed Lie ideal in $A \ota B$. By \cite[Proposition
  5.2]{sinc}, \cite[Theorem 3.13]{GJ2} and \cite[Theorem 3.8]{jk11},
every closed  ideal in $A \obp B$ is a product ideal of the form $ A \ota J$ for some
closed ideal $J$ in $B$, which admits a quasi-central approximate
identity (by \cite[Corollary 3.4]{sinc} and \cite[Proposition
  4.11]{GJ1}) and, by \Cref{tf-ts} and \Cref{commutators-c-star}, $A
\ota B$ has no tracial functionals. Thus, by \cite[Corollary 4.7 $\&$
  Lemma 4.2]{GJ1}, there exists a closed ideal $K$ in $A \ota B$ such
that $K \subseteq L \subseteq N(K)$. As noted above, $K$ is of the
form $K = A \ota J$ for some closed ideal $J$ in $B$.  And, from
\Cref{NK} \and \Cref{NK-oop}, we
have \begin{equation}\label{N-A-oop-J} N(A \ota J) = \ol{A \ot J +
    \mcal{Z}(A) \ot N(J)}.\end{equation} Since $A$ is simple,
$\mcal{Z}(A)$ is either $\C 1$, if $A$ is unital, or $\{0\}$, if $A$
is non-unital - see \cite[Lemma 2.1]{ART}. Hence, $L$ must be of the
form as in the statement.
 
 Converse is trivial. \end{proof}

\begin{corollary}\label{A-oh-BH}
  Let $A$ be a simple $C^*$-algebra,  $H$ be an infinite
  dimensional separable Hilbert space and $\ota = \oh, \obp$ or $\oop$. If $A$ is unital, then
  $A \ota B(H)$ has only three non-trivial proper closed Lie ideals,
  namely, $\C (1 \ot 1)$, the closed ideal $A \ota K(H)$ and the
  closed subspace $ A \ota K(H) + \C(1 \ot 1)$. And, if $A$ is non-unital, then
  $A \ota B(H)$ has only one non-trivial proper closed Lie ideal,
  namely,  the closed ideal $A \ota K(H)$.
\end{corollary}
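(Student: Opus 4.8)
The plan is to read this off directly from \Cref{thm1} by enumerating the closed ideals of $B(H)$ together with their Lie normalizers. First I would check that the hypotheses of \Cref{thm1} are met with $B = B(H)$: since $H$ is infinite dimensional, $B(H)$ is spanned by its commutators (as recalled just before \Cref{2A}), so $\ol{[B(H), B(H)]} = B(H)$ and hence $\mcal{T}(B(H)) = \emptyset$ by \Cref{tf-ts}. Thus one of $A, B(H)$ admits no tracial states irrespective of whether $A$ does, and \Cref{thm1} applies verbatim. Next I would invoke the classical fact that, for a separable infinite dimensional $H$, the only closed ideals of $B(H)$ are $(0)$, $K(H)$ and $B(H)$, and record the corresponding Lie normalizers: $N((0)) = \mcal{Z}(B(H)) = \C 1$; $N(K(H)) = \C 1 + K(H)$ by \Cref{4A}; and $N(B(H)) = B(H)$.

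The key observation driving the count is that, in the unital case, \Cref{thm1} writes $L = \ol{A \ot J + \C 1 \ot S}$ for a subspace $S \seq N(J)$, but the contribution of $S$ is felt only modulo the product ideal $A \ota J$: since $1 \in A$, we have $\C 1 \ot (S \cap J) \seq A \ot J$, so $L$ depends on $S$ only through its image in the quotient $N(J)/J$. This quotient is $\C 1$ for $J = (0)$, is $(\C 1 + K(H))/K(H) \cong \C$ for $J = K(H)$, and is $(0)$ for $J = B(H)$; each is therefore at most one dimensional, which is what collapses the a priori infinite family of admissible $S$ down to finitely many ideals $L$. Carrying out the enumeration: $J = (0)$ yields $L = (0)$ or $L = \C(1 \ot 1)$ according as $S = (0)$ or $S = \C 1$; $J = K(H)$ yields $L = A \ota K(H)$ or $L = A \ota K(H) + \C(1 \ot 1)$ according as $S \seq K(H)$ or not; and $J = B(H)$ yields only $L = A \ota B(H)$. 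Discarding the trivial ideals $(0)$ and $A \ota B(H)$ leaves exactly the three asserted non-trivial proper closed Lie ideals. For $A$ non-unital, \Cref{thm1} gives $L = A \ota J$, whence $L \in \{(0),\, A \ota K(H),\, A \ota B(H)\}$, of which only $A \ota K(H)$ is non-trivial and proper. A final routine check confirms distinctness, e.g. $1 \ot 1 \notin A \ota K(H)$ since its image in $A \ota (B(H)/K(H))$ is nonzero.

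I expect the main obstacle to be precisely this absorption bookkeeping in the unital case: making rigorous that although $N(K(H)) = \C 1 + K(H)$ is infinite dimensional and hence admits infinitely many subspaces $S$, each such choice produces only one of two closed Lie ideals because the compact part of $\C 1 \ot S$ is swallowed by $A \ota K(H)$, leaving only the one-dimensional scalar direction $\C(1 \ot 1)$ to distinguish cases. Everything else is direct substitution into \Cref{thm1} combined with the standard ideal structure of $B(H)$ and the normalizer computation of \Cref{4A}.
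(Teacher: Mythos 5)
Your proposal is correct and follows essentially the same route as the paper: check that $B(H)$ has no tracial states, apply \Cref{thm1}, enumerate the closed ideals $(0)$, $K(H)$, $B(H)$ of $B(H)$ with their Lie normalizers ($N((0))=\C 1$, $N(K(H))=\C 1+K(H)$ via \Cref{4A}), and observe that the subspace $S$ only matters modulo $J$, so each $J$ contributes at most two closed Lie ideals. Your explicit framing of the absorption step via the quotient $N(J)/J$ is just a cleaner articulation of the containment $\ol{A\ot K(H)+\C 1\ot S}\subseteq A\ota K(H)+\C(1\ot 1)$ that the paper records (with the closure removed by \Cref{NI-banach}), so the two arguments coincide in substance.
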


\begin{proof}
  Recall that $B(H)$ admits no tracial functional. Suppose $A$ is
  unital. By \Cref{thm1}, a closed Lie ideal of $A \ota B(H)$ is of
  the form $\ol{A \ot J + \C1 \ot S}$ where $J$ is either $(0), K(H)$
  or $B(H)$ and $S$ is a subspace of $N(J)$.  We have $N((0))
  =\mcal{Z}(B(H)) = \C 1$ and, by \Cref{4A}, $N(K(H)) = \C 1 +
  K(H)$. So, for a subspace $S \subseteq \C1 +K(H)$, we have  \[
  \ol{A \ot K(H) + \C1
    \ot S} \subseteq \ol{A \ot K(H) + \C(1 \ot 1)} = A \ota K(H) + \C(1
  \ot 1),
  \]
  where we could remove the closure in last equality, for
  instance, by \Cref{NI-banach}.  So, the only non-trivial proper closed Lie
  ideals of $A \ota B(H)$ are $\C (1 \ot 1)$, $A \ota K(H)$ and $A \ota
  K(H) + \C(1 \ot 1)$.

And, when $A$ is non-unital, $A \obp K(H)$ is the only non-trivial
proper closed Lie ideal in $A \obp B(H)$, by \Cref{thm1}.  \end{proof}

If $H$ is finite dimensional, then the Lie ideals of the algebraic
tensor product $A \ot \M_n$ were identified in \cite[Corollary
  4.18]{bks}.  Using this identification, we deduce the following:\\

 \begin{theorem}
Let $A$ be a unital $C^*$-algebra and $\|\cdot\|_\alpha $ be any
algebra cross norm (for instance, $ \oop, \omin, \obp$ or $
\oh$). Then, a subspace $L$ of $A \ota \M_n$ is a closed Lie ideal if
and only if it is of the form $L = I \ot sl_n + M \otimes \C 1$ for
some closed ideal $I$ and a closed subspace $M$ of $A$ satisfying
$\overline{[I,A]} \subseteq M \subseteq N(I)$, where $sl_n :=
\big\{[t_{ij}] \in \M_n : \sum_{i} t_{ii} = 0 \big\}$.
\end{theorem}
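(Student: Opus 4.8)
The plan is to reduce the statement to the purely algebraic classification of Lie ideals of $A \ot \M_n$ recorded in \cite[Corollary 4.18]{bks}, and then to overlay the norm topology coming from the cross norm $\|\cdot\|_\alpha$. Since $A$ is unital it has a bounded approximate identity, so by \Cref{A-ot-Mn-closed} the algebraic tensor product $A \ot \M_n$ is already complete; hence $A \ota \M_n = A \ot \M_n$ and a ``closed Lie ideal'' just means a norm-closed Lie ideal of this Banach algebra. The structural tool I would set up first is the topological direct sum $A \ot \M_n = (A \ot sl_n) \oplus (A \ot \C 1)$. Writing $\mathrm{tr}_A\big(\sum_{ij} a_{ij} \ot E_{ij}\big) = \sum_i a_{ii}$ for the $A$-valued trace, the proof of \Cref{A-ot-Mn-closed} already shows that the coordinate maps, and hence $\mathrm{tr}_A$, are $\|\cdot\|_\alpha$-continuous. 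Therefore the idempotent $x \mapsto \tfrac1n \mathrm{tr}_A(x) \ot 1$ is bounded, its range $A \ot \C 1$ and kernel $A \ot sl_n$ are closed and complementary, and the associated projections $P_{sc}, P_{sl}$ are continuous.

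With this in hand the ``if'' direction is short. Given a closed ideal $I$ and a closed subspace $M$ with $\ol{[I,A]} \seq M \seq N(I)$, the algebraic inclusions $[I,A] \seq M \seq N(I)$ hold, so $L = I \ot sl_n + M \ot \C 1$ is a Lie ideal by \cite[Corollary 4.18]{bks}. For closedness I would note that $I \ot \M_n$ is complete (again \Cref{A-ot-Mn-closed}), so $I \ot sl_n = (I \ot \M_n) \cap (A \ot sl_n)$ is closed, while $M \ot \C 1$ is isometric to $M$ via $m \mapsto m \ot 1$ and hence closed. Finally $L = (I \ot sl_n) \oplus (M \ot \C 1)$ is a sum of closed subspaces lying in the complementary closed spaces $A \ot sl_n$ and $A \ot \C 1$, and such a sum is closed because $P_{sl}$ and $P_{sc}$ are continuous.

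For the ``only if'' direction I would start from a closed Lie ideal $L$, apply \cite[Corollary 4.18]{bks} to write $L = I \ot sl_n + M \ot \C 1$ for some ideal $I$ and subspace $M$ with $[I,A] \seq M \seq N(I)$, and then show that $I$ and $M$ are automatically closed. The crucial point is that $L$ splits along the decomposition above: since every element of $I \ot sl_n$ is traceless and $P_{sc}(m \ot 1) = m \ot 1$, one checks $L \cap (A \ot sl_n) = I \ot sl_n$ and $L \cap (A \ot \C 1) = M \ot \C 1$. As intersections of the closed set $L$ with the closed subspaces $A \ot sl_n$ and $A \ot \C 1$, both $I \ot sl_n$ and $M \ot \C 1$ are closed; transporting back through the homeomorphisms $a \mapsto a \ot E_{12}$ and $a \mapsto a \ot 1$ onto closed coordinate subspaces shows $I$ and $M$ are closed in $A$ (here $n \ge 2$). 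Closedness of $M$ then upgrades $[I,A] \seq M$ to $\ol{[I,A]} \seq M$, while $M \seq N(I)$ is retained and $N(I)$ is closed by \Cref{NI}, yielding the asserted form.

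I expect the main obstacle to be the closedness of $M$: the naive identification $M \ot \C 1 = P_{sc}(L)$ only realizes it as the image of a closed set under a bounded projection, which need not be closed. The remedy, and the heart of the argument, is the cleaner identification $M \ot \C 1 = L \cap (A \ot \C 1)$ (and symmetrically $I \ot sl_n = L \cap (A \ot sl_n)$), which exhibits $M \ot \C 1$ as an intersection of closed sets. Everything else is routine once the continuity of $\mathrm{tr}_A$ from \Cref{A-ot-Mn-closed} supplies the ambient topological direct sum. The degenerate case $n = 1$, where $sl_1 = 0$, reduces to the standard description of closed Lie ideals of $A$ itself, so I would treat $n \ge 2$ as the substantive case.
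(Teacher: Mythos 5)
Your proposal is correct and takes essentially the same route as the paper: both reduce the problem to the algebraic classification in \cite[Corollary 4.18]{bks} after observing via \Cref{A-ot-Mn-closed} that $A \ot \M_n$ is already complete, and both use the continuity of the $A$-valued trace $\tau(\sum_{ij} x_{ij} \ot E_{ij}) = \sum_i x_{ii}$ to control the splitting $\M_n = sl_n \oplus \C 1$. The only difference is cosmetic: in the converse the paper replaces $I$ and $M$ by their closures and invokes the closedness proved in the forward direction, whereas you show the original $I$ and $M$ are automatically closed by identifying $I \ot sl_n = L \cap (A \ot sl_n)$ and $M \ot \C 1 = L \cap (A \ot \C 1)$ --- an equivalent argument resting on the same ingredients.
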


\begin{proof}
We first show that any such $L$ is a closed Lie ideal in $A \ota \M_n$.
Since every closed ideal in a $C^*$-algebra possesses a quasi-central
approximate identity, we have $N(\ol{[I, A]}) = N(I)$ - see, for
instance, \cite[Lemma 4.2]{GJ1}. So, $\overline{[I,A]} \subseteq M
\subseteq N(I) = N(\ol{[I, A]})$ which implies that $M$ is a Lie ideal
in $A$.  Now, let $x \in I, m \in M, c \in sl_n, a \in A$ and $b
\in \M_n$. Then, \begin{eqnarray*} [x \ot c + m \ot 1 , a \ot b] & = &
  x a \ot cb - ax \ot bc + ax \ot cb - ax \ot cb + [m,a] \ot b \\ & =
  & [x, a] \ot cb + a x \ot [c, b] + [m,a] \ot b \\ & \in & [I, A] \ot
  \M_n + I \ot sl_n + (M \cap I) \ot \M_n \\ & \subseteq & [I, A]
  \ot sl_n + [I, A] \ot \C1 + I \ot sl_n + M \ot \C 1 \\ & \subseteq & I \ot
  sl_n + M \ot \C 1,
\end{eqnarray*} 
where in the third last step, we have used the fact that $M \subseteq
N(I)$ and that $M$ is a Lie ideal, and the second last step follows because
$\M_n = sl_n \oplus \C 1 $.

It now remains to show that $L$ is closed as well. By
\Cref{A-ot-Mn-closed}, $A \ot \M_n$ is complete with respect to
$\|\cdot\|_{\alpha}$.  Let $\{a_k + b_k\ot 1\} $ be a Cauchy sequence
in $I \ot sl_n + M \otimes \C 1$, where $a_k \in I \ot sl_n $ and $b_k
\in M $. Let $\tau: A \ot \M_n \ra A$ be the $A$-valued trace map given
by $\tau(\sum_{ij} x_{ij} \ot E_{ij}) = \sum_i x_{ii}$. As seen in the
proof of \Cref{A-ot-Mn-closed}, $\tau$ is continuous; so, $ I \ot
sl_n = \ker(\tau) \cap (I \ot \M_n)$ is closed. In particular, since
$\{a_k + b_k\ot 1\}$ converges in $A \ot \M_n$ and $\tau (a_k + b_k\ot
1)= nb_k$, the sequence $\{b_k\}$ converges in $M$.  Thus, the sequence
$\{a_k\}$ must converge in $I \ot sl_n$, and $L$ is closed.

Conversely, suppose $L$ is a closed Lie ideal in $A \ota \M_n$. By
\cite[Corollary 4.18]{bks}, there exists an ideal $I$ and a subspace
$M$ of $A$ such that ${[I,A]} \subseteq M \subseteq N(I)$ and $L =
     {I \ot sl_n + M \otimes \C 1}$.  Now, we
     have $$ \overline{[\bar{I}, A]} = \overline{[I,A]} \subseteq
     \bar{M} \subseteq N(\bar{I})$$ and,  $L$ being closed,
$$  L = \ol{L} =  \overline {{I} \ot sl_n + M \otimes
  \C 1} \\
= \overline {\bar{I} \ot sl_n + \bar{M} \otimes
  \C 1} \\
= {\bar{I} \ot sl_n + \bar{M} \otimes
  \C 1},$$
 where the last equality holds because ${\bar{I} \ot sl_n + \bar{M} \otimes
  \C 1}$ is closed, as seen above. \end{proof}

\section{Closed Lie ideals of $A \omin C_0(X)$}
Based on the techniques of \cite{mar}, in \cite{GJ1}, all closed Lie
ideals of $A \omin B$ were identified for any unital simple
$C^*$-algebra $A$ with at most one tracial state and any unital
commutative $C^*$-algebra $B$.  Improvising the same techniques
appropriately again, in this section, we identify all the closed Lie
ideals of $A\omin B$ for simple $C^*$-algebra $A$ with at most one
tracial state and any commutative $C^*$-algebra $B$.

Throughout this section, $X$ will denote a locally compact Hausdorff
space. For every closed subset $F$ of $X$, the symbol $J(F)$ will
denote the closed ideal in $C_0(X)$ given by $J(F) = \{ f \in C_0(X):
f(F) = (0)\}$. Recall that this gives a bijective correspondence
between all the closed subsets of $X$ and all the closed ideals of
$C_0(X)$ (see \cite[Theorem 1.4.6]{kani}).  We will depend heavily on
the fact that for any $C^*$-algebra $A$ the canonical map from
$A\otimes C_{0}(X)$ into $C_0(X,A)$ extends to an isometric
$*$-isomorphism from $A\omin C_{0}(X)$ onto $C_0(X,A)$ (see
\cite[Proposition 1.5.6]{kani} and \cite[Theorem 4.14]{tak}). Under
this identification, we observe the following:

\begin{proposition}\label{J-tilde-F}
Let $A$ be a simple $C^*$-algebra. Every closed ideal of $C_0(X, A)$
is of the form $$\tilde{J}(F):= \{ f \in C_{0}(X, A) : f(x)=0
\ \text{for\ all} \ x \in F \} $$ for some closed subset $F$ of $X$.
\end{proposition}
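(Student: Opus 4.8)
The plan is to work throughout under the identification $A \omin C_0(X) \cong C_0(X,A)$ and to recover $F$ as the hull (common zero set) of the ideal. Given a closed ideal $\mathcal{I}$ of $C_0(X,A)$, I would set $F = \{x \in X : f(x) = 0 \text{ for all } f \in \mathcal{I}\}$. Since $x \mapsto \|f(x)\|$ is continuous for each $f$, we have $F = \bigcap_{f \in \mathcal{I}} \{x : f(x) = 0\}$, an intersection of closed sets and hence closed; moreover the inclusion $\mathcal{I} \subseteq \tilde{J}(F)$ is immediate from the definitions. All the content lies in the reverse inclusion $\tilde{J}(F) \subseteq \mathcal{I}$.

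The first ingredient is a fibrewise computation. For $x \in X$, the evaluation map $\mathrm{ev}_x : C_0(X,A) \to A$ is a surjective $*$-homomorphism, so $\mathrm{ev}_x(\mathcal{I})$ is an ideal of $A$; in fact it is closed, because $\mathcal{I} + \ker \mathrm{ev}_x$ is a sum of closed ideals of a $C^*$-algebra (hence closed) and its image under the isometric isomorphism $C_0(X,A)/\ker \mathrm{ev}_x \cong A$ is exactly $\mathrm{ev}_x(\mathcal{I})$. Simplicity of $A$ then forces $\mathrm{ev}_x(\mathcal{I}) \in \{0, A\}$, and by the very definition of $F$ we obtain $\mathrm{ev}_x(\mathcal{I}) = A$ for every $x \notin F$. (Even without the closedness observation, nonvanishing shows $\mathrm{ev}_x(\mathcal{I})$ is a nonzero ideal, hence dense in $A$, which is all the approximation below requires.) The second ingredient is that $\mathcal{I}$ is a $C_0(X)$-submodule: for $\phi \in C_0(X)$, $f \in \mathcal{I}$ and an approximate unit $(u_\lambda)$ of $C_0(X,A)$, the elements $(\phi u_\lambda) f$ lie in $\mathcal{I}$ (as $\phi u_\lambda \in C_0(X,A)$) and satisfy $\|(\phi u_\lambda)f - \phi f\|_\infty \le \|\phi\|_\infty \, \|u_\lambda f - f\|_\infty \to 0$, so $\phi f \in \mathcal{I}$ since $\mathcal{I}$ is closed.

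With these in hand I would establish $\tilde{J}(F) \subseteq \mathcal{I}$ by a local-to-global approximation, which is the crux of the argument. Fix $g \in \tilde{J}(F)$ and $\varepsilon > 0$. The set $K = \{x : \|g(x)\| \ge \varepsilon\}$ is compact (since $g \in C_0(X,A)$) and disjoint from $F$ (as $g$ vanishes on $F$). For each $x \in K$, choose $f_x \in \mathcal{I}$ with $f_x(x) = g(x)$, possible because $\mathrm{ev}_x(\mathcal{I}) = A$, and pick an open neighbourhood $V_x$ of $x$ on which $\|f_x - g\| < \varepsilon$. Extract a finite subcover $V_{x_1}, \dots, V_{x_n}$ of $K$ and a subordinate partition of unity $\phi_1, \dots, \phi_n \in C_c(X)$ with $0 \le \phi_i \le 1$, $\sum_i \phi_i \le 1$ everywhere and $\sum_i \phi_i \equiv 1$ on $K$ (available since $X$ is locally compact Hausdorff and $K$ is compact). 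Then $f := \sum_i \phi_i f_{x_i} \in \mathcal{I}$ by the module property, and a routine estimate, splitting into the case $y \in K$ (where $\sum_i \phi_i(y) = 1$ and each active $\phi_i$ forces $\|f_{x_i}(y) - g(y)\| < \varepsilon$) and the case $y \notin K$ (where $\|g(y)\| < \varepsilon$ and hence $\|f_{x_i}(y)\| < 2\varepsilon$ on the support of each active $\phi_i$), yields $\|f - g\|_\infty \le 3\varepsilon$. Letting $\varepsilon \to 0$ and using that $\mathcal{I}$ is closed gives $g \in \mathcal{I}$, completing the reverse inclusion.

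The main obstacle is precisely this patching step: one must upgrade the fibrewise surjectivity $\mathrm{ev}_x(\mathcal{I}) = A$ into a genuine uniform approximation of $g$ over all of $X$, and it is the combination of the partition of unity with the $C_0(X)$-module structure of $\mathcal{I}$ that allows the independently chosen local witnesses $f_x$ to be glued into a single element of $\mathcal{I}$ close to $g$. Everything else (closedness of $F$, the easy inclusion, and the fibre analysis) is formal.
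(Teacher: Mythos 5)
Your proof is correct, and it takes a genuinely different route from the paper's. The paper gets the global structure from tensor-product theory: since $C_0(X)$ is nuclear and $A$ is simple, \cite[Theorem 3.1]{GJ1} already tells us that every closed ideal of $A \omin C_0(X)$ is a product ideal $A \omin J(F)$, and the entire content of the paper's proof is then the density of $A \ot J(F)$ in $\tilde{J}(F)$, established by approximating a given $f \in \tilde{J}(F)$ by sums of elementary tensors $\sum_i g(x_i) \ot h_i'$ with $h_i' \in J(F)$, via a partition of unity. You never invoke that theorem: working intrinsically in $C_0(X,A)$, you recover $F$ as the hull (common zero set) of the given ideal $\mathcal{I}$, use simplicity fibrewise (the image $\mathrm{ev}_x(\mathcal{I})$ under evaluation is a closed ideal of $A$, hence $(0)$ or $A$), observe that $\mathcal{I}$ is a $C_0(X)$-submodule, and then glue local witnesses chosen from $\mathcal{I}$ itself by a partition of unity to obtain $\tilde{J}(F) \subseteq \mathcal{I}$; all your intermediate claims (closedness of $\mathrm{ev}_x(\mathcal{I})$, the module property via an approximate unit, the $3\varepsilon$ estimate split over $K$ and its complement) check out. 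The patching is the same in spirit in both proofs, but it is applied to different approximants: elementary tensors in the paper, elements of the ideal under study in yours. What each approach buys: the paper's argument is shorter modulo the cited theorem, and its explicit elementary-tensor approximation is reused almost verbatim in the proof of \Cref{L-tilde-F}, where approximants of the special form $\sum_i f(x_i) \ot h_i'$ are needed; your argument is elementary and self-contained, avoids the minimal-tensor-product machinery altogether, and in fact recovers the commutative case of \cite[Theorem 3.1]{GJ1}, since applying it to the closed ideal $A \omin J(F)$ (whose hull is exactly $F$, by Urysohn's Lemma) gives $A \omin J(F) = \tilde{J}(F)$, so that every closed ideal $\mathcal{I} = \tilde{J}(F) = A \omin J(F)$ is a product ideal.
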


\begin{proof}
Since a commutative $C^*$-algebra is nuclear, by
\cite[Theorem 3.1]{GJ1}, every closed ideal in $A\omin C_{0}(X)$ is a
product ideal of the form $J=A \omin J(F)$ for some closed subset $F$
of $X$. We show that $J$ corresponds to $\tilde{J}(F)$.

Clearly, $J\subseteq \tilde{J}(F)$, and for the equality it suffices
to show that $J$ is dense in $\tilde{J}(F)$. Let $f \in
\tilde{J}(F)$ and $\epsilon > 0$. Since $C_c(X,A)$ is dense in
$C_0(X,A)$, there exists a $g \in C_c(X,A)$ such that $\norm{f -
  g} < \epsilon $.

Let $K:= \text{supp}(g)$ and, for each $a \in A$ and $r>0$, let
$B_r(a):=\{ b \in A: \| b - a\| < r \}$ and $B_r^{\times}(a):=
B_r(a)\setminus\{0\}$. Since $\|g(x) \| = \|f(x) -g(x)\| < \epsilon $
for every $x \in F$, the collection $\lbrace g^{-1}\left(B_{\epsilon
}^{\times}(g(x)) \setminus \overline{g(F)} \right): x \in K\setminus F
\rbrace$ $\cup \lbrace  g^{-1}({ B_{\frac{3\epsilon}{2}}(0))} \rbrace$
forms an open cover of the compact set $K$.  Fix a finite subcover,
say, $ \lbrace g^{-1}({ B_{\frac{3\epsilon}{2}}(0))} \rbrace$ $ \cup
\lbrace g^{-1}\big( B_{\epsilon}^{\times}(g(x_i)) \setminus
\overline{g(F)}\big)$ $: 1 \leq i \leq n \rbrace$. Since $X$ is
locally compact and Hausdorff, there exists a partition of unity on
$K$ sub-ordinate to this finite subcover, i.e., there exists a family
$\{ h_i: 0 \leq i \leq n\} \subset C_c(X)$ such that $0 \leq h_i \leq
1$ for all $ 0 \leq i \leq n$, $\text{supp}(h_0) \subseteq U_0 :=
g^{-1}( B_{\frac{3\epsilon}{2}}(0))$, $\text{supp}(h_i) \subseteq U_i
:= g^{-1}\big({ B_{\epsilon}^{\times}(g(x_i)) \setminus
  \overline{g(F)} \big)}$ for all $1 \leq i \leq n$ and $\sum_{i =
  0}^n h_i = 1$ on $K$ (see \cite[Theorem 2.13]{rud}). Clearly, $U_i \cap F = \emptyset$ for all $ 1 \leq i \leq n$.

Let $ V = \{ x \in X: \sum_{i = 0}^n h_i(x) < 3/2\} $.  Then $V \cap
(\cup_{i=0}^{n} U_i)$ is an open set containing $K$.  Using Urysohn's
Lemma, pick an $ h' \in C_c(X)$ such that $0 \leq h' \leq 1 $, $h'(K)
=\{1\}$ and $\text{supp}(h') \subset V \cap (\cup_{i=0}^{n} U_i)$.
Then, for $ h_i' := h' h_i \in C_c(X)$, $\text{supp}(h_i') \subset V \cap U_i $
because $\text{supp}(h_i) \subset U_i $ and $\text{supp}(h') \subset V
$.  Also, for $x \in K$ we have
\[
\sum_{i = 0}^n h_i'(x) = \sum_{i =
  0}^n h'(x) h_i(x) = \sum_{i = 0}^n h_i(x) = 1.
\]
Moreover,  $0
\leq \sum_{i = 0}^n h_i' \leq 3/2 $ because $\sum_{i = 0}^n h_i'(x) =
\sum_{i = 0}^n h'(x) h_i(x) \leq \sum_{i = 0}^n h_i(x) = 3/2$ for $x
\in V \cap (\cup_{i=0}^{n} U_i)$ and $\text{supp}(\sum_{i = 0}^n h_i')
\subset V \cap (\cup_{i=0}^{n} U_i)$.

Note that for $1 \leq i \leq n$, $U_i$, and hence $V \cap U_i$ is
disjoint from $F$, so $h_i'\in J(F)$ and $\sum_{i=1}^{n} g(x_i)
\otimes h_i' \in A \otimes J(F)$.  Also, for  each $x \in
X$, we have $g(x) = g(x) \sum_{i=0}^nh_i'(x)$ because when $x \in
K$, then  $\sum_{i = 0}^n h_i' (x) = 1$ and when $x \in K^c$,
then $g(x) = 0$. Fix an $x_0 \in K^c$. Then, we observe that 
\vspace*{-2mm}
\begin{eqnarray*}
  \| g(x) - \sum_{i = 1}^n h_i'(x)  g(x_i)\| & =& \| g(x) \sum_{i = 0}^n
  h_i' (x) - \sum_{i = 0}^n h_i' (x) g(x_i)\| \qquad (\text{since}
  \ g(x_0) = 0)\\ & \leq & \sum_{i = 0}^n  \|g(x)  - g(x_i)\| h_i' (x) \\
 & = & \sum_{i: x \in U_i \cap V} \| g(x)  - g(x_i)\| h_i' (x) \\
& <& \frac{3 \epsilon}{2}
\end{eqnarray*}
for all $x \in X$. In
particular, $\| f - \sum_{i=1}^{n} g(x_i) \otimes h_i'\| < \frac{5
  \epsilon}{2}$, implying that $J$ is dense in $\tilde{J}(F)$.
\end{proof}

Recall that for a $C^*$-algebra $A$ with $\mathcal{T}(A) \neq
\emptyset$, $sl(A):= \cap\{\ker (\varphi) : \varphi \in \mcal{T}(A)\}$ is
a closed Lie ideal in $A$ and so is $\overline{sl(A) \ot J(F)}$ in $A
\omin C_0(X)$. We show what it corresponds to in $C_0(X, A)$.

\begin{lemma}\label{L-tilde-F}
For a $C^*$-algebra $A$ with $\mathcal{T(A)} \neq \emptyset$ and a closed subset
$F$ of $X$, the closed Lie ideal $L(F) := \overline{sl(A) \ot J(F)}$ in
$A \omin C_0(X)$ corresponds to the closed Lie ideal
\[
\tilde{L}(F): = \lbrace f \in C_0(X,A) :\, f(x)=0 \ \text{for\ all}\ x \in F\ \text{and}\
\varphi\circ  f= 0\ \text{for\ all}\ \varphi \in \mathcal{T}(A) \rbrace
\]
in $C_0(X,A)$.
\end{lemma}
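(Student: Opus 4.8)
The plan is to prove the set equality $L(F) = \tilde{L}(F)$ under the isometric $*$-isomorphism $A \omin C_0(X) \cong C_0(X, A)$ by establishing the two inclusions. First I would describe explicitly what an elementary tensor $a \ot g$ (with $a \in A$, $g \in C_0(X)$) corresponds to in $C_0(X, A)$, namely the function $x \mapsto g(x)\,a$. With this dictionary in hand, the two defining conditions of $\tilde{L}(F)$---vanishing on $F$ and annihilation by every tracial state---can be checked directly on elementary tensors and then propagated to closures by continuity.

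\textbf{The inclusion $L(F) \seq \tilde{L}(F)$.}
For an elementary tensor $a \ot g$ with $a \in sl(A)$ and $g \in J(F)$, the corresponding function is $x \mapsto g(x)\,a$. Since $g(x) = 0$ for $x \in F$, this function vanishes on $F$; and since $a \in sl(A) = \cap_{\varphi \in \mcal{T}(A)} \ker(\varphi)$, for every $\varphi \in \mcal{T}(A)$ we have $(\varphi \circ f)(x) = g(x)\,\varphi(a) = 0$. Thus every elementary tensor in $sl(A) \ot J(F)$ already satisfies both defining conditions of $\tilde{L}(F)$, so the algebraic tensor product $sl(A) \ot J(F)$ lies in $\tilde{L}(F)$. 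Both defining conditions of $\tilde{L}(F)$ are preserved under norm limits---the first because point evaluations $f \mapsto f(x)$ are contractive, the second because each $\varphi \in \mcal{T}(A)$ is continuous and the sup-norm dominates---so $\tilde{L}(F)$ is closed, and therefore $L(F) = \ol{sl(A) \ot J(F)} \seq \tilde{L}(F)$.

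\textbf{The reverse inclusion $\tilde{L}(F) \seq L(F)$ --- the main obstacle.}
This is the harder direction and is where I expect the real work to lie. The strategy is to mimic the density argument of \Cref{J-tilde-F}. Given $f \in \tilde{L}(F)$ and $\epsilon > 0$, I would approximate $f$ uniformly by a compactly supported $g \in C_c(X, A)$ as before, and then run the same partition-of-unity construction on $\text{supp}(g)$ to produce a finite sum $\sum_i g(x_i) \ot h_i'$ approximating $f$, exactly as in the proof that $J$ is dense in $\tilde{J}(F)$. The point that requires genuine care is that the approximants must land in $sl(A) \ot J(F)$, not merely in $A \ot J(F)$: that is, each coefficient $g(x_i)$ must be arranged to lie in $sl(A)$. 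The $J(F)$-membership of the functions $h_i'$ is handled exactly as in \Cref{J-tilde-F}. For the $sl(A)$ condition, the key observation is that $sl(A)$ is the intersection of the kernels of the (at most one, by hypothesis on $A$) tracial states, and since $\varphi \circ f = 0$ one expects the sample values $g(x_i)$ to lie close to $sl(A)$; one then projects or perturbs them into $sl(A)$ at a cost controlled by $\epsilon$. The cleanest route, when $A$ has a unique tracial state $\tau$, is to note $sl(A) = \ker(\tau)$ is codimension-one and complemented, and to subtract off the scalar $\tau(g(x_i))$-component, using $\|g(x_i)\|$ near $\|f(x_i)\| $ together with $\tau \circ f = 0$ to bound the correction uniformly. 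Assembling these estimates shows that a suitable perturbation of $\sum_i g(x_i) \ot h_i'$ lies in $sl(A) \ot J(F)$ and stays within $O(\epsilon)$ of $f$, proving that $sl(A) \ot J(F)$ is dense in $\tilde{L}(F)$ and hence $\tilde{L}(F) \seq \ol{sl(A) \ot J(F)} = L(F)$.
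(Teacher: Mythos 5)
Your setup and the easy inclusion $L(F) \seq \tilde{L}(F)$ are fine, and your plan for the reverse inclusion --- rerun the partition-of-unity construction of \Cref{J-tilde-F} and then fix up the coefficients so that they lie in $sl(A)$ --- is exactly where the paper's proof goes. But your resolution of the coefficient problem has a genuine gap: you invoke ``at most one tracial state, by hypothesis on $A$,'' which is \emph{not} the hypothesis of this lemma. The lemma assumes only $\mcal{T}(A) \neq \emptyset$, and it must be proved in that generality, since it is later applied (via \Cref{L-closed} and \Cref{l-ideal}) to unital $A$ with an arbitrary nonempty trace space. Your concrete construction --- subtract off the scalar $\tau(g(x_i))$-component --- is available only when $\mcal{T}(A)=\{\tau\}$ (and, as written, only when $A$ is unital); for general $\mcal{T}(A)$ the space $sl(A)=\cap_{\varphi \in \mcal{T}(A)}\ker\varphi$ can have infinite codimension, you exhibit no bounded projection onto it, and so ``projects or perturbs them into $sl(A)$'' is left unjustified in precisely the case the lemma needs.

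The missing idea is simpler than any projection, and it is the paper's one-line trick: do not perturb $g(x_i)$ abstractly --- replace it by $f(x_i)$. Since $f \in \tilde{L}(F)$, we have $\varphi(f(x))=0$ for every $\varphi \in \mcal{T}(A)$ and every $x\in X$, so the coefficients $f(x_i)$ lie \emph{exactly} in $sl(A)$, with no assumption whatsoever on the size of $\mcal{T}(A)$. The substitution costs only $O(\epsilon)$: $\|f(x_i)-g(x_i)\| \leq \|f-g\| < \epsilon$, and the functions $h_i' \geq 0$ produced in \Cref{J-tilde-F} satisfy $0 \leq \sum_{i} h_i' \leq 3/2$, whence $\big\| \sum_i \big(g(x_i)-f(x_i)\big) \ot h_i' \big\| \leq \frac{3\epsilon}{2}$ and therefore $\big\| f - \sum_i f(x_i) \ot h_i' \big\| < \frac{5\epsilon}{2} + \frac{3\epsilon}{2} = 4\epsilon$. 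Note that your own observation that $g(x_i)$ ``lies close to $sl(A)$'' is witnessed exactly by the element $f(x_i)$; reaching instead for a complemented-subspace argument was the wrong turn, and with this replacement your proof closes in full generality.
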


\begin{proof}
Under the canonical $*$-isomorphism between $ A \omin C_0(X)$ and $C_0(X,
A)$, the closed Lie ideal ${L}(F)$ is mapped onto a closed Lie ideal
in $\widetilde{L}(F)$. It just remains to show that the image is dense
in $\widetilde{L}(F)$.

Let $f \in \widetilde{L}(F)$ and $\epsilon > 0$.  Then, $f \in
\widetilde{J}(F)$, and as in the proof of \Cref{J-tilde-F}, there
exist a $g \in C_c(X, A)$ with $\| f - g\| < \epsilon $, and finite
sets $\{x_1, \ldots, x_n\} \subseteq K \setminus F$ (where $K:=
\text{supp}(g)$) and $\{h_1', \ldots, h_n'\} \subseteq J(F)$ such that
$0 \leq \sum_{i=1}^{n} h_i' \leq 3/2$ (because $0 \leq \sum_{i=0}^{n}
h_i' \leq 3/2$ and $0 \leq h_0' \leq 1$) and $\| f - \sum_{i = 1}^n
g(x_i) \ot h_i'\| < \frac{5 \epsilon}{2}$. Since $\|f(x_i) - g(x_i)\|
< { \epsilon}$ for all $1 \leq i \leq n$, we see that
\[
\| f - \sum_{i
  = 1}^n f(x_i) \ot h_i'\| \leq \| f - \sum_{i = 1}^n g(x_i) \ot
h_i'\| + \| \sum_{i = 1}^n g(x_i)\ot h_i' - \sum_{i = 1}^n f(x_i) \ot
h_i'\| < \frac{8\epsilon}{2}.
\]
Further, since $sl(A) =
\cap_{\varphi \in \mcal{T}(A)} \ker (\varphi)$ and $f \in
\widetilde{L}(F)$, it readily follows that $\sum_{i = 1}^n f(x_i) \ot
h_i' \in sl(A) \ot J(F)$ and we are done.  \end{proof}

Applying \Cref{L-tilde-F}, the proof of \cite[Proposion 5.3]{GJ1}
works verbatim to yield  the following:

\begin{proposition}\label{L-closed}
Let $A$ be a unital C*-algebra with $\mcal{T}(A) \neq \emptyset$. Then,
a subspace of the form $L = \overline{sl(A) \ot J} + \mathbb{C}1
\ot S$, where $S$ is a closed subspace  and $J$
is a closed ideal in $C_0(X)$, is a closed Lie ideal in $A \omin C_0(X)$
\end{proposition}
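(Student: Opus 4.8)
The plan is to show that $L = \overline{sl(A) \ot J} + \C 1 \ot S$ is simultaneously a Lie ideal and a closed subspace, treating these two properties separately since the Lie-ideal algebra is largely formal while closedness is where the geometry enters. Write $J = J(F)$ for the closed subset $F \seq X$ corresponding to $J$, so that, via the identification $A \omin C_0(X) \cong C_0(X,A)$, the piece $\overline{sl(A)\ot J}$ becomes $\tilde L(F)$ by \Cref{L-tilde-F}. First I would verify the Lie-ideal property. Since $sl(A) = \cap_{\varphi \in \mcal{T}(A)} \ker\varphi$ is a closed Lie ideal of $A$ containing $\overline{[A,A]}$ (indeed it equals $N(\overline{[A,A]})$-type object), one checks directly that for $x \ot h \in sl(A) \ot J$ and $a \ot f \in A \omin C_0(X)$ the commutator $[x\ot h, a \ot f] = [x,a] \ot hf$ lands in $sl(A) \ot J$, because $[x,a] \in \overline{[A,A]} \subseteq sl(A)$ and $hf \in J$; for the central piece, $[1 \ot s, a \ot f] = 0 \ot (\cdots)$ has first tensor leg a commutator $[1,a]=0$, hence contributes nothing. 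Taking closures and using bilinearity and continuity of the bracket, one obtains $[L, A \omin C_0(X)] \subseteq L$.

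The substantive part is showing $L$ is \emph{closed}, and this is where I expect the main obstacle to lie: the sum of two closed subspaces need not be closed, so one cannot simply invoke closedness of each summand. The natural strategy is to realize $L$ as the preimage of a closed set under a continuous map, or as the intersection/kernel structure coming from the tracial states together with vanishing on $F$. Concretely, since $A$ is unital, fix the (at most one) tracial state $\varphi_0$ on $A$; then $sl(A) = \ker\varphi_0$ and $A = sl(A) \oplus \C 1$ as a topological direct sum, with the projection $a \mapsto \varphi_0(a) 1$ continuous. I would use this splitting fibrewise on $C_0(X,A)$: define a continuous map sending $f \in C_0(X,A)$ to the scalar-valued function $x \mapsto \varphi_0(f(x))$ lying in $C_0(X)$, and analyze $L$ as those $f$ vanishing on $F$ whose $\varphi_0$-component lies in the closed subspace $S \seq C_0(X)$. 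The point is that the decomposition $f = (f - \varphi_0(f)1) + \varphi_0(f) 1$ separates the $\tilde L(F)$ part from the central $\C 1 \ot S$ part continuously, so that a Cauchy sequence in $L$ splits into Cauchy sequences in each closed summand with limits remaining in the respective summands.

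Since the proposition is stated as an application of \cite[Proposition 5.3]{GJ1} with \Cref{L-tilde-F} substituted in, the cleanest route is to invoke that this decomposition-and-closedness argument is exactly the one carried out in \cite[Proposition 5.3]{GJ1} for the unital-commutative case, and that the only ingredient needing replacement is the description of what $\overline{sl(A) \ot J}$ looks like inside $C_0(X,A)$, which \Cref{L-tilde-F} now supplies for arbitrary (possibly non-unital) $C_0(X)$. Thus I would state that, granting \Cref{L-tilde-F}, one checks the continuous fibrewise splitting above goes through verbatim: the continuity of $f \mapsto \varphi_0 \circ f$ and the identification of the first summand with the closed set $\tilde L(F)$ guarantee that a limit of elements $\overline{sl(A)\ot J} + \C 1 \ot s_k$ decomposes as a limit in $\tilde L(F)$ plus a limit $\C 1 \ot s$ with $s \in \overline S = S$, so $L$ is closed. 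The anticipated difficulty is purely in confirming that the $\varphi_0$-projection interacts correctly with the ideal constraint at $F$ (i.e.\ that vanishing on $F$ and the trace condition are independent constraints that do not obstruct the splitting), which holds precisely because the central summand $\C 1 \ot S$ imposes no condition on $F$ while $\tilde L(F)$ carries both.
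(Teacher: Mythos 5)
Your Lie-ideal verification is fine, and your closing paragraph --- reduce everything to \cite[Proposition 5.3]{GJ1}, with \Cref{L-tilde-F} supplying the description of $\overline{sl(A)\ot J}$ inside $C_0(X,A)$ --- is exactly the paper's proof, which consists of precisely that citation. The genuine problem is in your own reconstruction of the closedness argument: you ``fix the (at most one) tracial state $\varphi_0$'' and assert $sl(A)=\ker\varphi_0$ and $A=sl(A)\oplus\C 1$ as a topological direct sum. The proposition assumes only $\mcal{T}(A)\neq\emptyset$, not uniqueness of the trace, and both assertions fail whenever $A$ has more than one tracial state: already for $A=\C\oplus\C$ one has $sl(A)=(0)$ while $\ker\varphi_0$ is one-dimensional, so $A\neq sl(A)\oplus\C 1$; in general $A/\overline{[A,A]}$ can have arbitrarily large dimension. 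Your further description of $L$ as ``those $f$ vanishing on $F$ whose $\varphi_0$-component lies in $S$'' is also incorrect, since the central summand $\C 1\ot S$ imposes no vanishing condition on $F$, so elements of $L$ need not vanish there.

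Fortunately, the mechanism you set up survives without uniqueness, and this is how the gap should be closed. Pick \emph{any} $\varphi_0\in\mcal{T}(A)$ and define $\Phi: C_0(X,A)\to C_0(X)$ by $\Phi(f)=\varphi_0\circ f$; this is a contractive linear map. Since $sl(A)=\bigcap_{\varphi\in\mcal{T}(A)}\ker\varphi\subseteq\ker\varphi_0$, $\Phi$ annihilates $sl(A)\ot J$ and hence, by continuity, annihilates $\overline{sl(A)\ot J}$; moreover $\Phi(1\ot s)=s$. So if $g_n+1\ot s_n\to f$ with $g_n\in\overline{sl(A)\ot J}$ and $s_n\in S$, then $s_n=\Phi(g_n+1\ot s_n)\to\Phi(f)=:s$, which lies in $S$ because $S$ is closed; consequently $g_n\to f-1\ot s$, which lies in the closed subspace $\overline{sl(A)\ot J}$, and $f\in L$. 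Note that only the inclusion $sl(A)\subseteq\ker\varphi_0$ is ever used --- never the equality --- so no trace-uniqueness is needed, and in fact even \Cref{L-tilde-F} is not required for this step.
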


The next step is to identify the Lie normalizer of any closed ideal of
$A \omin C_0(X)$, which required an appeal to the Tietze Extension
Theorem in \cite{mar} and \cite{GJ1}. Since we are in a locally
compact setting, we will need a slightly different version of Tietze
Extension Theorem.  Before providing the precise statement, we first
recall a version of Urysohn's Lemma, that suits us, a proof of which
can be deduced easily from \cite[Theorem 2.12]{rud}.

\begin{theorem}[Urysohn's Lemma]\label{urysohn}
Let $X$ be a locally compact Hausdorff space, $V$ be an open set in
$X$ and $K$ be a compact subset of $X$ such that $K \subset U$. Then
there exists an $f \in C_c\left(X, [0,1]\right)$ such that $f(K) =\{ 1\}$ and
$supp(f) \subseteq V$.

In particular, if $K_1$ and $K_2$ are two disjoint compact subsets of
$X$, then for each $r > 0$, there exists an $f \in C_c(X, [-r, r])$
such that $f(K_1) = \{-r\}$ and $f(K_2) = \{r\}$.
\end{theorem}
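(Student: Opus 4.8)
The plan is to treat the two assertions separately, since the first is classical Urysohn and the second is a short corollary of it. For the first assertion, I would simply invoke the locally compact version of Urysohn's Lemma as recorded in \cite[Theorem 2.12]{rud}: given an open set $V$ and a compact set $K \subseteq V$ in a locally compact Hausdorff space $X$, there exists $f \in C_c(X)$ with $0 \leq f \leq 1$, with $f \equiv 1$ on $K$, and with $\text{supp}(f) \subseteq V$. The requirements $0 \leq f \leq 1$ and $f(K) = \{1\}$ are exactly the statement that $f \in C_c(X, [0,1])$ and $f(K) = \{1\}$, so no additional work is needed for this part.

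For the second assertion, I would apply the first part twice in order to separate the two disjoint compact sets. Since $K_1$ and $K_2$ are compact and disjoint, $X \setminus K_1$ is an open set containing $K_2$ and $X \setminus K_2$ is an open set containing $K_1$. Applying the first part with $(K, V) = (K_2, X \setminus K_1)$ produces $g_2 \in C_c(X, [0,1])$ with $g_2(K_2) = \{1\}$ and $\text{supp}(g_2) \subseteq X \setminus K_1$, the latter forcing $g_2 \equiv 0$ on $K_1$; symmetrically, $(K, V) = (K_1, X \setminus K_2)$ produces $g_1 \in C_c(X, [0,1])$ with $g_1(K_1) = \{1\}$ and $g_1 \equiv 0$ on $K_2$. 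I would then set $f := r(g_2 - g_1)$. Evaluating on the two sets gives $f \equiv r(0 - 1) = -r$ on $K_1$ and $f \equiv r(1 - 0) = r$ on $K_2$, i.e.\ $f(K_1) = \{-r\}$ and $f(K_2) = \{r\}$, exactly as desired.

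It then remains only to check membership in $C_c(X, [-r, r])$, which is the single place where a small verification, rather than a direct quotation, is required. Since $g_1, g_2$ take values in $[0,1]$, the difference $g_2 - g_1$ takes values in $[-1, 1]$, so $f$ takes values in $[-r, r]$; and $\text{supp}(f) \subseteq \text{supp}(g_1) \cup \text{supp}(g_2)$ is a union of two compact sets, hence compact, so indeed $f \in C_c(X)$. I do not expect any genuine obstacle: the entire weight of the statement sits in the classical first part, and the separation construction for the second part is forced once one reads off the two vanishing conditions from the support constraints.
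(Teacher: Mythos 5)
Your proposal is correct and follows exactly the route the paper intends: the paper gives no written proof, saying only that the statement ``can be deduced easily from \cite[Theorem 2.12]{rud}'', and your argument is precisely that deduction --- quote Rudin's locally compact Urysohn Lemma for the first part, then apply it twice to the disjoint compacta and take $f = r(g_2 - g_1)$. The verification of the range and of compact support is complete, so nothing is missing.
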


Since we are unaware of any standard reference, adapting the proof
of \cite[Theorem 20.4]{rud}, we  obtain the following version of
Tietze Extension Theorem.
\begin{theorem}[Tietze Extension Theorem]\label{tietze}
Let $X$ be a locally compact Hausdorff space and $F$ be a closed
subspace of $X$. Then, for each $f \in C_0(F)$ there exists an
$\tilde{f} \in C_0(X)$ such that $\tilde{f}_{|_{F}} = f$.
  \end{theorem}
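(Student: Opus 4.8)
The plan is to prove a locally compact version of the Tietze Extension Theorem by reducing it to the classical compact case, applied to the one-point compactification. The key observation is that a function $f \in C_0(F)$ on a closed subset $F$ of the locally compact Hausdorff space $X$ extends naturally to a continuous function $f^+$ on the one-point compactification $F^+ = F \cup \{\infty\}$ by setting $f^+(\infty) = 0$; here $F$ is itself locally compact since it is closed in $X$, and the vanishing-at-infinity condition is exactly what makes $f^+$ continuous at the point at infinity. Likewise, $X^+ = X \cup \{\infty\}$ is a compact Hausdorff space and $F^+$ embeds as a closed subset of $X^+$ (one must check that $F^+$ is closed in $X^+$, which follows because $F$ is closed in $X$ and both compactifications share the same point $\infty$).

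First I would set up these two one-point compactifications $X^+$ and $F^+$ and verify that $F^+$ is a closed subspace of the compact Hausdorff space $X^+$, and that the assignment $f \mapsto f^+$ gives a continuous extension living in $C(F^+)$ with $f^+(\infty)=0$. Second, I would invoke the classical Tietze Extension Theorem (\cite[Theorem 20.4]{rud}) for the compact Hausdorff space $X^+$ and its closed subset $F^+$ to obtain a continuous extension $g \in C(X^+)$ with $g_{|_{F^+}} = f^+$. The issue at this point is that $g$ need not vanish at $\infty$ in $X^+$, so restricting $g$ to $X$ would only land in $C_b(X)$ rather than $C_0(X)$.

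The main obstacle, and the crux of the argument, is arranging the extension to vanish at infinity so that the restriction $\tilde f := g_{|_X}$ actually lies in $C_0(X)$. Since $f^+(\infty) = 0$ and $\infty \in F^+$, any Tietze extension $g$ automatically satisfies $g(\infty) = f^+(\infty) = 0$; therefore $\tilde f = g_{|_X}$ is continuous on $X$ and vanishes at infinity precisely because $g$ is continuous at $\infty \in X^+$ with value $0$. This is the place where the hypothesis $f \in C_0(F)$ (rather than merely $f \in C_b(F)$) is indispensable: it is what forces $f^+(\infty)=0$ and hence $g(\infty)=0$. Finally I would record that $\tilde f_{|_F} = g_{|_F} = f^+_{|_F} = f$, completing the proof. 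I expect the verification that $F^+$ sits as a closed subset of $X^+$, and the careful bookkeeping that the point at infinity is common to both compactifications, to be the only subtle points; everything else is a direct transcription of the compact case.
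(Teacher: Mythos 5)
Your proof is correct, but it takes a genuinely different route from the paper's. The paper proves the theorem from scratch by adapting Rudin's successive-approximation argument: since $f \in C_0(F)$, the level sets $\{x \in F: f(x) \geq 1/3\}$ and $\{x \in F: f(x) \leq -1/3\}$ are \emph{compact} and disjoint, so the locally compact version of Urysohn's Lemma (\Cref{urysohn}) yields $f_1 \in C_c(X, [-1/3, 1/3])$ with $|f - f_1| \leq 2/3$ on $F$; iterating produces a series $\sum_n f_n$ of compactly supported functions converging in $C_0(X)$ to the desired extension. You instead reduce to the classical compact-case Tietze theorem by passing to the one-point compactifications $F^+ \subseteq X^+$, using the hypothesis $f \in C_0(F)$ exactly to make $f^+$ continuous at $\infty$ with value $0$, and then noting that any extension $g$ automatically satisfies $g(\infty)=0$ because $\infty \in F^+$, so its restriction to $X$ lies in $C_0(X)$. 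Both arguments ultimately rest on the same two ingredients — closedness of $F$ (which in your argument is what makes $F \cup \{\infty\}$ closed in $X^+$ and makes the subspace topology on it agree with the one-point-compactification topology of $F$, since compact subsets of $X$ meet $F$ in compact subsets of $F$; and which in the paper's argument is what makes the level sets compact in $X$) and the vanishing of $f$ at infinity. Your route is shorter and cleanly isolates where the $C_0$ hypothesis enters, at the cost of invoking the compact/normal-space theorem as a black box; the paper's route is self-contained modulo Urysohn's Lemma and is constructive, exhibiting the extension as an explicit norm-convergent sum of compactly supported functions. The two points you flag as subtle are indeed the only ones needing verification, and both are routine.
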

\begin{proof}
  Without loss of generality, assume that $f$ is real valued and that
  $-1 \leq f \leq 1$.  Note that $F_0:=\{x \in F: f(x) = 0\}$ is
  closed in $F$ and hence in $X$. So $X \setminus F_0$ is open in
  $X$. Now, since $f \in C_0(F)$, the sets $F^+:=\{ x \in F: f(x) \geq
  1/3\}$ and $F^-:=\{ x \in F: f(x) \leq -1/3\}$ are compact in $F$
  and hence in $X$ as well. Also, $F^+$ and $ F^-$ are disjoint (and
  contained in $W$). So, by above version of Urysohn's Lemma, there
  exists an $f_1 \in C_c(X, [-1/3, 1/3])$ such that $f(F^-) = \{
  -1/3\}$ and $f(F^+) = \{ 1/3\}$. Thus, $$f - (f_1)_{|{F}} \in
  C_0(F),\, |f - f_1| \leq \frac{2}{3}\ \text{on}\ F\ \text{ and}\ |f_1| \leq
  \frac{1}{3}.$$ Repeating the argument for $f-f_1$, there exists an $f_2 \in
  C_c(X, [\frac{-1}{3} \cdot \frac{2}{3}, \frac{1}{3} \cdot
    \frac{2}{3}])$ such that $$f - (f_1 - f_2)_{|_{F}} \in C_0(F),\, |f
  - f_1-f_2| \leq \left(\frac{2}{3}\right)^2\ \text{on}\ F\ \text{
    and}\ |f_2| \leq \frac{1}{3} \cdot \frac{2}{3}.$$ Continuing the
  process, we obtain a sequence $\{f_n\} \subset C_c(X)$ such that
$$|f - f_1 - f_2 - \cdots - f_n| \leq
  \left(\frac{2}{3}\right)^n\ \text{ on}\ F\ \text{ and}\ |f_n| \leq
  \frac{1}{3} \cdot \left(\frac{2}{3}\right)^{n-1}\ \text{for all}
  \ n.$$ Thus, $\sum_n f_n$ is a convergent series in $C_0(X)$ and if
  $\tilde{f}$ denotes its sum, then we have $f  =\tilde{f}$ on $F$,
  as was required. \end{proof}

We can now identify the Lie normalizer of any closed ideal of $A \omin
C_0(X)$, which will be crucial in the identification of closed Lie
ideals later. We adapt the technique used in \cite{GJ1} and \cite{mar}
to fit our requirements.

\begin{lemma}\label{lie-normalizer}\label{6A}
Let $A$ be a simple  $C^*$-algebra and $I$ be any closed ideal
in $A \omin C_0(X)$. Then, 
\[
N(I)= \left\{
\begin{array}{ll}
  I + \mathbb{C}1 \otimes C_0(X), & \text{if}\ $A$ \ \text{is unital, and}\\
  I, &  \text{if}\ $A$ \ \text{is non-unital}.
\end{array}
\right.
\]
\end{lemma}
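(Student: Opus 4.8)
The plan is to compute $N(I)$ by working inside the isometric model $A \omin C_0(X) \cong C_0(X,A)$ and using the classification of closed ideals from \Cref{J-tilde-F}, namely $I = \widetilde J(F)$ for some closed subset $F \subseteq X$. Since $N(I) = \pi^{-1}(\mcal{Z}(C_0(X,A)/I))$ by \Cref{NI}, where $\pi$ is the quotient map, the heart of the matter is to identify the center of the quotient. The quotient $C_0(X,A)/\widetilde J(F)$ should be identifiable with $C_0(F,A)$ (restriction of functions to $F$, which is surjective onto $C_0(F,A)$ by the Tietze Extension Theorem \Cref{tietze} applied coordinate-wise, together with \Cref{urysohn}), and the center of $C_0(F,A)$ is $C_0(F,\mcal{Z}(A))$. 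Because $A$ is simple, $\mcal{Z}(A) = \C 1$ if $A$ is unital and $\mcal{Z}(A) = (0)$ if $A$ is non-unital (see \cite[Lemma 2.1]{ART}). In the non-unital case this immediately forces $\mcal{Z}(C_0(F,A)) = (0)$, hence $N(I) = \pi^{-1}(0) = I$; in the unital case we get $\mcal{Z}(C_0(F,A)) = C_0(F)\cdot 1$, and pulling back should yield $N(I) = I + \C 1 \ot C_0(X)$.

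First I would make precise that $a \in C_0(X,A)$ lies in $N(\widetilde J(F))$ if and only if $[a,b] \in \widetilde J(F)$ for every $b \in C_0(X,A)$, i.e. $[a(x),b(x)] = 0$ for all $x \in F$ and all such $b$. Evaluating against elements of the form $b = c \ot h$ with $c \in A$ and $h \in C_c(X)$ chosen via \Cref{urysohn} to be supported near a point $x_0 \in F$ and equal to $1$ there, I can arrange $b(x_0) = c$ for an arbitrary $c \in A$, which forces $[a(x_0),c] = 0$ for all $c \in A$; that is, $a(x_0) \in \mcal{Z}(A)$ for every $x_0 \in F$. Conversely any $a$ with $a(F) \subseteq \mcal{Z}(A)$ clearly normalizes $\widetilde J(F)$. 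Thus
\[
N(\widetilde J(F)) = \{ a \in C_0(X,A) : a(x) \in \mcal{Z}(A)\ \text{for all}\ x \in F\}.
\]
When $A$ is non-unital and simple, $\mcal{Z}(A) = (0)$, so the condition reads $a(F) = (0)$, i.e. $N(I) = \widetilde J(F) = I$, as claimed.

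For the unital case, $\mcal{Z}(A) = \C 1$, so $N(I)$ consists of all $a \in C_0(X,A)$ with $a(x) \in \C 1$ for every $x \in F$. It remains to show this set equals $I + \C 1 \ot C_0(X)$, interpreted inside $C_0(X,A)$ as $\widetilde J(F) + \{ \lambda \ot 1 : \lambda \in C_0(X)\}$. The inclusion $\supseteq$ is immediate. For $\subseteq$, given such an $a$, define a scalar function on $F$ by $a(x) = \varphi(x)\,1$ for $x \in F$; this $\varphi$ lies in $C_0(F)$, and by the Tietze Extension Theorem \Cref{tietze} it extends to some $\tilde\varphi \in C_0(X)$. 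Then $a - \tilde\varphi \ot 1$ vanishes on $F$, hence lies in $\widetilde J(F) = I$, giving $a \in I + \C 1 \ot C_0(X)$. The step I expect to require the most care is the verification that restriction to $F$ really gives the quotient identification and the explicit use of Urysohn's Lemma to produce, at each point of $F$, a test element $b$ with prescribed value $c \in A$ there while staying in $C_0(X,A)$; this is exactly where local compactness (rather than compactness) of $X$ matters, and is the reason the $C_0$-version of Tietze (\Cref{tietze}) was needed in place of the classical one.
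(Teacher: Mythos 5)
Your proposal is correct and, despite the opening framing via $N(I)=\pi^{-1}(\mcal{Z}(C_0(X,A)/I))$ (a scaffold you never actually need), the argument you execute is essentially the paper's own proof: classify $I$ as $\widetilde J(F)$ via \Cref{J-tilde-F}, use Urysohn test elements $c\ot h$ to force $a(x)\in\mcal{Z}(A)$ for $x\in F$, invoke simplicity to get $\mcal{Z}(A)=\C 1$ or $(0)$, and in the unital case extend the scalar-valued restriction $a|_F\in C_0(F)$ by the $C_0$-Tietze theorem (\Cref{tietze}) and subtract to land in $I$.
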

\begin{proof}
Since $C_0(X)$ is nuclear and $A$ is simple, by \cite[Theorem
  3.1]{GJ1}, $I$ is of the form $A \omin J$ for some closed ideal $J$
in $C_0(X)$. Clearly, $A \omin J+ \C 1 \otimes C_0(X) \seq N(A \omin
J)$ if $A$ is unital and $A \omin J \seq N(A \omin J)$ when $A$ is non-unital.

Now, let  $ f \in N(A \omin J) \seq C_0(X, A)$. Then,  $[f, g] \in A
\omin J$ for all $g \in C_0(X, A)$.  Since each singleton $\{x\}$ is
compact and $X$ is locally compact and Hausdorff, by Uryshon's Lemma,
there exists an $f_x \in C_c(X)$ such that $f_x(x) = 1 $ (see
\Cref{urysohn}). Now, for each $a \in A$, Define $\tilde{f}_{x,a}: X
\ra A$ by $ \tilde{f}_{x,a} (y) = f_x(y)a$ for $y \in X$. Let $F$ be
the closed set in $X$ that determines the closed ideal $J$, i.e.,
$J(F) = J$. Then, $\tilde{f}_{x,a} \in C_c(X, A) \subseteq C_0(X, A)$
and we have $ 0 = [f, \tilde{f}_{x,a}](x) = f(x) a - a f(x)$ for all
$a \in A$ and $x \in F$.  Thus, $f(x) \in \mcal{Z}(A)$ for all
$x \in F$.

If $A$ is unital, then $\mcal{Z}(A) = \C 1$. In particular,
$g:=f_{|_F}$ can be thought of as a scalar valued function on
$F$. Since $f \in C_0(X, A)$, we have $g \in C_0(F)$. So, by
\Cref{tietze}, there exists a $\tilde{g}\in C_0(X)$ such that
$\tilde{g}_{|_{F}} = g$.  Under the identification $C_0(X) = \C1
\otimes C_0(X)$, we have $\tilde{g} \in \C 1 \ot C_0(X)$. And, since
$f (x) - \tilde{g} (x) = 0$ for all $ x \in F$, by \Cref{J-tilde-F},
we have $f - \tilde{g} \in A \omin J(F)$. Hence, $f \in A \omin J +
\mathbb{C}1 \otimes C_0(X)$ so that $N(A \omin J) =  A \omin J +
\mathbb{C}1 \otimes C_0(X)$.

And, if $A$ is non-unital, then $\mcal{Z}(A) = (0)$ - see \cite[Lemma
  2.1]{ART}. So, $ f \in A\omin J$ and hence  $N(A \omin J) = A
\omin J$.
\end{proof}

We conclude this article with  the promised characterization of closed Lie ideals
of $A \omin C_0(X)$, which is also a generalization of \cite[Theorem 3.1]{mar}
and \cite[Theorem 5.6]{GJ1}.

\begin{theorem}\label{l-ideal}
  Let $A$ be a simple $C^*$-algebra and $B$ be a commutative
  $C^*$-algebra. Then a subspace $L$ of $A \omin B$ is a closed Lie
  ideal if and only if
  \begin{eqnarray}\label{l-ideal-eqn-unital}
 &&   \ol{sl(A) \ot J} \subseteq L \subseteq A \omin J + \C 1  \ot B\ \text{if A is unital, and} \\
 && \ol{sl(A) \ot J} \subseteq L \subseteq A \omin J  \ \text{if A is non-unital} \label{l-ideal-eqn-non-unital}
  \end{eqnarray}
  for some closed ideal $J$ in $B$. Further, if $A$ is unital and admits a
      unique tracial state, and $L$ is a closed subspace of $A \omin
      B$ satisfying (\ref{l-ideal-eqn-unital}), then $L$ is of the form $L
      = \ol{sl( A) \ot J} + \C 1 \otimes S$ for some closed subspace
      $S$ in $B$.
\end{theorem}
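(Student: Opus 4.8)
The plan is to prove \Cref{l-ideal} in two stages: first the biconditional characterization via the Lie-normalizer machinery, and then the refinement in the unique-tracial-state case.

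For the first stage, I would begin with the "if" direction. Given a closed ideal $J$ in $B = C_0(X)$, the lower bound $\ol{sl(A) \ot J} \subseteq L$ and the upper bounds are exactly the ingredients needed to invoke the general characterization recalled after \Cref{NI}, namely that a closed subspace $L$ is a Lie ideal precisely when $\ol{[I, A\omin B]} \subseteq L \subseteq N(\ol{[I, A\omin B]})$ for some closed ideal $I$. The natural candidate is $I = A\omin J$; since $C_0(X)$ is nuclear and $A$ is simple, \cite[Theorem 3.1]{GJ1} guarantees every closed ideal has this product form. First I would verify that $\ol{[A\omin J, A\omin B]} = \ol{sl(A) \ot J}$ in the unital case (and equals $A\omin J$ when $\mcal{T}(A)=\emptyset$ or $A$ is non-unital), using that $\ol{[A,A]} = sl(A)$ for a unital $C^*$-algebra and the tensorial commutator identities from \Cref{commutators-c-star}. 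Then \Cref{6A} supplies $N(A\omin J) = A\omin J + \C 1 \ot C_0(X)$ (unital) or $A\omin J$ (non-unital), which matches the stated upper bounds exactly. For the "only if" direction, given any closed Lie ideal $L$, the same characterization produces a closed ideal $I = A\omin J$ with $\ol{[I, A\omin B]} \subseteq L \subseteq N(I)$, and unwinding these two inclusions through the computations just described yields \eqref{l-ideal-eqn-unital} or \eqref{l-ideal-eqn-non-unital}.

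For the second stage, assume $A$ is unital with a unique tracial state $\tau$, so that $sl(A) = \ker\tau$ has codimension one and $A = sl(A) \oplus \C 1$ as a closed-subspace decomposition. Given $L$ satisfying \eqref{l-ideal-eqn-unital}, I would set $S := \{\, b \in B : 1 \ot b \in L \,\}$ and show $L = \ol{sl(A)\ot J} + \C 1 \ot S$. The inclusion $\supseteq$ is immediate once $S$ is shown to be a closed subspace of $B$, which follows because $b \mapsto 1\ot b$ is an isometric embedding and $L$ is closed. For $\subseteq$, I would apply the trace map at the level of $C_0(X,A)$: under the identification $A\omin C_0(X) \cong C_0(X,A)$, composing pointwise with $\tau$ gives a continuous projection $P\colon C_0(X,A) \to C_0(X)\,(=\C 1 \ot B)$ whose kernel is $\ol{sl(A)\ot B}$. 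For $w \in L$, the upper bound forces $w = u + 1\ot b$ with $u \in \ol{sl(A)\ot J}$ and $b \in B$; applying $P$ shows $1\ot b = P(w)$, and I must check $1\ot b \in L$, i.e. $b \in S$. This is where the uniqueness of the trace is essential: $P(w) = \tau\circ w$ lands in $\C 1 \ot B$, and since $u = w - 1\ot b \in L$ (as $\ol{sl(A)\ot J}\subseteq L$), we get $1\ot b \in L$, hence $b \in S$ and $w \in \ol{sl(A)\ot J} + \C 1\ot S$.

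The main obstacle I anticipate is the precise identification $\ol{[A\omin J, A\omin B]} = \ol{sl(A)\ot J}$, and correspondingly verifying that the projection $P$ has kernel exactly $\ol{sl(A)\ot B}$ rather than something larger. Both rest on the nontrivial fact that $\ol{[A,A]} = sl(A)$ for a unital $C^*$-algebra admitting tracial states together with a clean interplay between the closure and the algebraic tensor product; the completeness and density arguments (of the flavor used in \Cref{J-tilde-F} and \Cref{L-tilde-F}) are what make these closures behave, and one must take care that the minimal tensor norm lets these identifications pass to the completion. The refinement step also quietly uses that $S$ automatically contains $J$ (since $\C 1 \ot J \subseteq A\omin J$ and, via the lower bound together with $1 \ot J$, one checks $J \subseteq S$), so that the two summands $\ol{sl(A)\ot J}$ and $\C 1 \ot S$ genuinely reconstruct all of $L$; I would confirm this containment explicitly to rule out a gap between the upper and lower bounds.
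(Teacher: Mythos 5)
Your proposal is correct and is essentially the paper's own proof. The first stage uses exactly the same ingredients: the sandwich criterion of \cite[Corollary 4.7]{GJ1} together with \cite[Theorem 3.1]{GJ1} (every closed ideal of $A \omin C_0(X)$ is of the form $A \omin J$), the identification $\ol{[A \omin B, A \omin J]} = \ol{sl(A)\ot J}$, and \Cref{6A} for $N(A\omin J)$. One citation quibble: the paper obtains the commutator identification from \cite[Lemma 4.2]{GJ1}, commutativity of $C_0(X)$, and \cite[Theorem 2.9]{cnp}; your appeal to \Cref{commutators-c-star} is not quite right, since that lemma assumes one factor admits no tracial states, though the tensorial identities you have in mind are of the correct flavour. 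For the refinement, the paper proves only the key equality $A \omin J + \C 1 \ot B = \ol{sl(A)\ot J} + \C 1 \ot B$ (from $A = \C 1 \oplus sl(A)$) and leaves the extraction of $S$ implicit; your explicit choice $S = \{b \in B : 1 \ot b \in L\}$ and the modular-law step (once $w = u + 1\ot b$ with $u \in \ol{sl(A)\ot J} \subseteq L$, then $1 \ot b \in L$, so $b \in S$) is precisely the detail the paper glosses over, so this is a clarification rather than a deviation.

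There is, however, one genuine error in your closing paragraph: the claim that $S$ ``automatically contains $J$'' is false, and you should not attempt to verify it. For instance, $L = \ol{sl(A)\ot J}$ satisfies (\ref{l-ideal-eqn-unital}), yet for this $L$ one has $S = (0)$: the slice map $\tau \ot \mathrm{id}$ is bounded on $A \omin B$, annihilates $\ol{sl(A)\ot J}$, and sends $1 \ot b$ to $b$, so $1 \ot b \in L$ forces $b = 0$. Fortunately nothing in your argument needs $J \subseteq S$; the reconstruction $L = \ol{sl(A)\ot J} + \C 1 \ot S$ requires only that every $w \in L$ split as $u + 1 \ot b$ with $u$ in the lower bound, which your second paragraph already provides. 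Delete that remark and the proof stands as written.
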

\begin{proof}
  Let $B = C_0(X)$ for some locally compact Hausdorff space $X$.  Let
  $L$ be a subspace of $A \omin C_0(X)$.  Since $A$ is simple, by
  \cite[corollary 4.7]{GJ1} and \cite[Theorem 3.1]{GJ1}, $L$ is a
  closed Lie ideal if and only if there exists a closed ideal $J$ in
  $C_0(X)$ such that $$\overline{[A \omin C_0(X), A \omin J]}
  \subseteq L \subseteq N(A \omin J).$$ We first show that $ \ol{[A
      \omin C_0(X), A \omin J]} = \overline{sl(A) \ot J}.$ Indeed, by
  \cite[Lemma 4.2]{GJ1}, we have
  $$\overline{[A \omin C_0(X),A \omin J]} = \overline{[A \omin J, A
      \omin J]},$$ then, by commutativity of $C_0(X)$, it can be easily
  deduced that $\overline{[A \omin J ,A \omin J]}=
  \overline{\overline{[A ,A]} \ot J}$, and,   by \cite[Theorem 2.9]{cnp},
  we have $sl(A)=\overline{[A,A]}$.

  On the other hand, by \Cref{lie-normalizer}, we have $N( A \omin J)
  = A \omin J + \C 1 \ot C_0(X)$ (resp., $A \omin J$) if $A$ is unital (resp., non-unital), thereby establishing
  (\ref{l-ideal-eqn-unital}) and (\ref{l-ideal-eqn-non-unital}).

  Finally, suppose that $A$ admits a unique tracial state and that $L$
satisfies (\ref{l-ideal-eqn-unital}).  Thanks to  \Cref{L-closed}, it just remains to show that $A \omin J +
\mathbb{C}1 \otimes C_0(X)= \overline{sl(A) \ot J} + \mathbb{C}1
\otimes C_0(X)$. Indeed, since $\mathcal{T}(A)$ is singleton, we have $A= \mathbb{C}1
\oplus sl(A)$;  so
\begin{eqnarray*}
  A \omin J + \mathbb{C}1 \otimes C_0(X) & = &
(\mathbb{C}1 \oplus sl(A)) \omin J + \mathbb{C}1 \otimes
C_0(X) \\
&  = &  \overline{sl(A) \ot J} +\mathbb{C}1
\otimes C_0(X) .
\end{eqnarray*}  
\end{proof}
In particular, this yields the following structure of closed Lie ideals.
\begin{corollary}\label{A-omin-B}
 Let $A$ be a simple  $C^*$-algebra  and $B$ be a commutative $C^*$-algebra.
 \begin{enumerate}
   \item If $A$ is unital and admits at most one tracial state, then a  subspace $L$ of $A \omin B$ is a
closed Lie ideal if and only if
$$  L = \left\{
 \begin{array}{ll}
      \overline{sl(A) \ot J} + \C 1 \ot S, & \mathit{if}\ \mcal{T}(A)
     \neq \emptyset,\ \mathit{and} \\ \overline{A \otimes J + \C 1 \ot
       S}, & \mathit{if}\ \mcal{T}(A) = \emptyset,
 \end{array}
 \right.    $$
 for some closed ideal  $J$ and closed subspace $S$ in $B$.
\item If $A$ is non-unital with 
  $\mcal{T}(A) = \emptyset$ , then a subspace $L$ of $A \omin B$ is a
  closed Lie ideal if and only  $L$ is a closed ideal.
 \end{enumerate}
 \end{corollary}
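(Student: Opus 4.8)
The plan is to derive everything from \Cref{l-ideal}, whose ``sandwich'' characterization already does the heavy lifting; the corollary merely repackages the three possible shapes of a closed Lie ideal according to whether $A$ is unital and whether $\mcal{T}(A)$ is empty. Throughout I would write $B = C_0(X)$ and use that, $A$ being simple, every closed ideal of $A \omin B$ is a product ideal $A \omin J$ for some closed ideal $J$ of $B$ (\Cref{J-tilde-F}).

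First I would treat part (1) with $\mcal{T}(A) \neq \emptyset$. Since $A$ admits at most one tracial state, it then admits exactly one, so the hypotheses of the ``Further'' clause of \Cref{l-ideal} hold. Hence, if $L$ is a closed Lie ideal, \Cref{l-ideal} supplies a closed ideal $J$ of $B$ with $\ol{sl(A) \ot J} \seq L \seq A \omin J + \C 1 \ot B$, and the Further clause upgrades this to $L = \ol{sl(A) \ot J} + \C 1 \ot S$ for some closed subspace $S$ of $B$. Conversely, any subspace of this form is a closed Lie ideal by \Cref{L-closed}, which settles this case.

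Next I would handle the two traceless cases together, using that $\mcal{T}(A) = \emptyset$ forces $\ol{[A,A]} = A$ (\Cref{tf-ts}); consequently the lower bound of \Cref{l-ideal}, which in general equals $\ol{\,\ol{[A,A]}\ot J}$, collapses to $A \omin J$. In part (2), where $A$ is non-unital, \Cref{l-ideal} sandwiches a closed Lie ideal as $A \omin J \seq L \seq A \omin J$, forcing $L = A \omin J$, a closed ideal by \Cref{J-tilde-F}; the converse is immediate, since every closed ideal is a closed Lie ideal and is of the form $A \omin J$. For part (1) with $\mcal{T}(A) = \emptyset$ and $A$ unital, \Cref{l-ideal} together with \Cref{lie-normalizer} gives $A \omin J \seq L \seq N(A \omin J) = A \omin J + \C 1 \ot B$. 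I would then set $S := \{ b \in B : 1 \ot b \in L\}$; since $b \mapsto 1 \ot b$ is an isometry (the minimal norm being a cross norm) and $L$ is closed, $S$ is a closed subspace of $B$, with $J \seq S$ because $\C 1 \ot J \seq A \omin J \seq L$. The identity $L = A \omin J + \C 1 \ot S$ then follows: $\supseteq$ is clear from $A \omin J \seq L$ and the definition of $S$, while for $\subseteq$ one writes $x \in L$ as $x = a + 1 \ot b$ with $a \in A \omin J$ and $b \in B$ (possible since $L \seq A \omin J + \C 1 \ot B$) and observes that $1 \ot b = x - a \in L$, so $b \in S$. Finally, since $L$ is closed and $\ol{U} + \ol{V} \seq \ol{U + V}$ for subspaces $U, V$, one obtains $L = A \omin J + \C 1 \ot S = \ol{A \ot J + \C 1 \ot S}$, the asserted form; the converse follows by noting that such an $L$ again obeys the sandwich of \Cref{l-ideal}.

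The routine parts here are the case bookkeeping and the two inclusions; the only genuinely substantive step I expect is the extraction of the closed subspace $S$ in the unital traceless case and the verification that $A \omin J + \C 1 \ot S$ coincides with the closure $\ol{A \ot J + \C 1 \ot S}$. The two facts that make this work are that $1 \ot \cdot : B \to A \omin B$ is an isometric embedding (so that $S$, as a preimage of the closed set $L$, is closed) and that the Lie normalizer $N(A \omin J) = A \omin J + \C 1 \ot B$ is itself closed (\Cref{lie-normalizer}), which is what legitimizes splitting an element of $L$ as $a + 1 \ot b$.
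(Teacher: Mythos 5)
Your proposal is correct and follows essentially the same route as the paper: all three cases are read off from the sandwich characterization of \Cref{l-ideal}, with \Cref{L-closed} (and closedness of $N(A\omin J)$ from \Cref{lie-normalizer}) supplying the sufficiency direction. The only difference is one of detail, not of method: you explicitly carry out the extraction of the closed subspace $S=\{b\in B: 1\ot b\in L\}$ in the unital case with $\mcal{T}(A)=\emptyset$, a step the paper leaves implicit when it simply cites \Cref{l-ideal} for that case.
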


\begin{proof}
 (1) We only need to prove the necessity part. Suppose $\mcal{T}(A)$ is a
  singleton. Then, by \Cref{l-ideal}, we obtain the desired form for
  $L$.
  \vspace*{1mm}
  
 And, if $\mcal{T}(A) = \emptyset$, then $sl(A) = \ol{[A, A]} = A$ -
 see \Cref{tf-ts}. In particular, by \Cref{l-ideal}, $L$ must be of
 the form $L = \overline{A \ot J + \C 1 \ot S}$ for some closed
 subspace $S$ of $B$.
 \vspace*{1mm}
 
 (2) follows immediately from \Cref{l-ideal}. 
\end{proof}

\end{document}